\newtheorem{lemma}{Lemma}
\newtheorem{theorem}{Theorem}
\renewcommand{\b}[1]{{\boldsymbol{#1}}}
\renewcommand{\c}[1]{{\mathcal{#1}}}
\newcommand{\RR}{\mathbb{R}}
\newcommand{\NN}{\mathbb{N}}
\newcommand{\ZZ}{\mathbb{Z}}
\DeclareMathOperator{\Grad}{\nabla}
\DeclareMathOperator{\Div}{\nabla\cdot}
\DeclareMathOperator{\Curl}{\nabla \times}
\providecommand{\RT}[1]{\pol_{#1}^3\oplus \b x \widetilde{\pol}_{#1}}
\providecommand{\NDOne}[1]{\pol_{#1}^3\oplus \b x\times \widetilde{\pol}_{#1}^3}
\providecommand{\NDTwo}[1]{\pol_{#1}^3}
\providecommand{\IND}[1]{\c{I}^{#1}_d}
\providecommand{\INTwo}[1]{\c{I}^{#1}_2}
\providecommand{\INThree}[1]{\c{I}^{#1}_3}
\providecommand{\BB}[2]{B^{#1}_{#2}}
\providecommand{\Xa}[2]{\b{\Phi}^{#1}_{#2}}
\providecommand{\Xb}[2]{\b{\Psi}^{#1}_{#2}}
\providecommand{\Xc}[2]{\b{\Upsilon}^{#1}_{#2}}
\newcommand{\trh}[1]{\mathrm{tr}_{#1}}
\newcommand{\trt}[1]{\b{\mathrm{{tr}}}_{t #1}}
\newcommand{\trn}[1]{\mathrm{tr}_{n#1}}
\newcommand{\NED}{N\'ed\'elec }
\def\restrict#1{\raise-0.2ex\hbox{\ensuremath|}_{#1}}
\newcommand{\cd}{c_{\b\alpha}}
\newcommand{\cdt}{\widetilde{c}_{\b\alpha}}
\newcommand{\cdn}{{c}_{\b\alpha+\b e_i-\b e_j}}
\newcommand{\ca}{c_{\b\alpha}^{(1)}}
\newcommand{\cb}{c_{\b\alpha}^{(2)}}
\newcommand{\cc}{c_{\b\alpha}^{(3)}}
\newcommand{\SSS}{S}
\newcommand{\EEE}{\b E}
\newcommand{\VVV}{\b V}
\newcommand{\WWW}{ W}
\begin{document}
\title[BB Basis for Tetrahedral Elements]{Bernstein-B\'ezier Bases for
Tetrahedral Finite Elements}
\author{Mark Ainsworth}
\address{Division of Applied Mathematics, Brown University, 182 George St,
Providence RI 02912, USA.}
\email{Mark\_Ainsworth@brown.edu}

\author{Guosheng Fu}
\address{Division of Applied Mathematics, Brown University, 182 George St,
Providence RI 02912, USA.}
\email{Guosheng\_Fu@brown.edu}

\keywords{}
\subjclass{65N30. 65Y20. 65D17. 68U07}

\newcommand{\pol}{\mathbb{P}}

\begin{abstract}
We present a new set of basis functions for 
$H(\mathrm{curl})$-conforming, $H(\mathrm{div})$-conforming, and 
$L^2$-conforming finite elements of arbitrary order
on a tetrahedron. The basis functions are expressed in terms of 
Bernstein polynomials and augment the natural $H^1$-conforming
Bernstein basis.
The basis functions respect the differential operators, namely, 
the gradients of the high-order $H^1$-conforming Bernstein-B\'ezier basis functions 
form part of
the $H(\mathrm{curl})$-conforming
basis, and the curl of the 
high-order, non-gradients $H(\mathrm{curl})$-conforming basis functions 
form part of
the $H(\mathrm{div})$-conforming basis, and the 
divergence of the 
high-order, non-curl $H(\mathrm{div})$-conforming basis functions 
form part of
the $L^2$-conforming basis.

Procedures are given for the
efficient computation of the mass and stiffness  matrices with 
these basis functions without using quadrature rules 
for (piecewise) constant coefficients on affine tetrahedra.
Numerical results are presented to 
illustrate the use of the basis to approximate representative problems.
\end{abstract}
\maketitle

\section{Introduction}
Let $\pol_{n}$ be the space of polynomials of degree no greater than $n$ on the tetrahedron, 
$\widetilde{\pol}_n$ is the space of homogeneous polynomials of degree $n$, and 
$\pol_{n}^3:= [\pol_{n}, \pol_{n}, \pol_{n}]^T$ is the vector space whose components lie in $\pol_n$.
We present a set of Bernstein-B\'ezier bases for the following 
two families of arbitrary order polynomial exact sequences 
on a tetrahedron $T$:
\begin{table}[!ht]
\caption{Two exact sequences on a tetrahedron $T$}
\begin{tabular}{l c c c c c c c}
\vspace{0.04cm}
&$H^1(T)
$&
&
$
H(\mathrm{curl};T)$
&
&
$
H(\mathrm{div};T)$
&
&
$
L^2(T)$
\\
\sf{First sequence:}
&$\pol_{n+1}
$&
$\overset{\Grad}{\longrightarrow}$ &
$
\pol_{n}^3
\oplus\b x
\times{\widetilde\pol}_{n}^3
$
&
$\overset{\Curl}{\longrightarrow}$ &
$
\pol_{n}^3
\oplus \b x\,\widetilde\pol_n
$&
$\overset{\Div}{\longrightarrow}$ &
$\pol_{n}
$\\
\sf{Second sequence:}
&$\pol_{n+2}
$&
$\overset{\Grad}{\longrightarrow}$ &
$
\pol_{n+1}^3
$
&
$\overset{\Curl}{\longrightarrow}$ &
$
\pol_{n}^3
$&
$\overset{\Div}{\longrightarrow}$ &
$\pol_{n-1}
$
\end{tabular}
\label{table-ex}
\end{table} 

The basis for the $H^1$-case is taken to be the standard Bernstein polynomials as in 
\cite{AinsworthAndriamaroDavydov11}. The basis for the 
$H(\mathrm{curl})$, $H(\mathrm{div})$ and $L^2$ cases are also constructed in terms of Bernstein polynomials
in such a way that:
the gradients of the high-order $H^1$-conforming Bernstein-B\'ezier basis functions are part of 
the $H(\mathrm{curl})$-conforming
basis; the curl of the 
high-order, non-gradient $H(\mathrm{curl})$-conforming basis functions are part of 
the $H(\mathrm{div})$-conforming basis; and, the 
divergence of the 
high-order, non-curl $H(\mathrm{div})$-conforming basis functions are part of 
the $L^2$-conforming basis.

The construction of high-order finite element basis functions for the polynomial exact sequences 
has been extensively studied in the literature, see the 
hierarchical bases in \cite{AinsworthCoyle03,Zaglmayr06,FuentesKeithDemkowiczNagaraj15},
and the Bernstein-B\'ezier bases in \cite{ArnoldFalkWinther09,AinsworthAndriamaroDavydov11,AinsworthAndriamaroDavydov15}.
Previous work \cite{AinsworthAndriamaroDavydov11} has demonstrated that using
the Bernstein polynomials as basis functions for the $H^1$-case offers a number of 
advantages including
enabling 
optimal complexity assembly  of the stiffness and mass matrices 
even for nonlinear problems on non-affine triangulations.
The current work shows that these, and other advantages of the Bernstein-B\'ezier basis are not confined
to the $H^1$-case but extend to the whole de Rham sequence.

Quite apart from computational consideration, the Bernstein polynomials provide a convenient tool for 
theoretical work on splines \cite{LaiSchumaker07} and were used in \cite{ArnoldFalkWinther09} to 
present alternative set of basis functions for the de Rham sequence.
Comparing with the Bernstein-B\'ezier basis introduced in \cite{ArnoldFalkWinther09}, our basis 
enjoys the so-called {\it local exact sequence} property \cite{Zaglmayr06}, which means that 
the exact sequence property holds at the basis function level: gradients of the high-order $H^1$-conforming 
basis functions form part of  the $H(\mathrm{curl})$-conforming basis functions;
curl of the high-order, non-gradient $H(\mathrm{curl})$-conforming 
basis functions form part of the $H(\mathrm{div})$-conforming basis functions; and, 
divergence of the high-order, non-curl $H(\mathrm{div})$-conforming 
basis functions form part of the $L^2$-conforming basis functions.
The clear separation of the gradients and non-gradients for the 
$H(\mathrm{curl})$-conforming basis, and divergence-free functions and non-divergence-free functions for the 
$H(\mathrm{div})$-conforming basis is not only satisfying from a theoretical standpoint, but can also be 
exploited in the practical application of the bases.

The rest of the paper is organized as follows: in Section \ref{sec:sequence}, we present the main results of the
basis functions for the two sequences in Table \ref{table-ex}.
In Section \ref{sec:curl} and Section \ref{sec:div}, we present details of the construction of the 
$H(\mathrm{curl})$-conforming basis, and the $H(\mathrm{div})$-conforming basis, respectively.
Then, in Section \ref{sec:comp}, we discuss the efficient implementation of our basis for constant coefficient problems on affine tetrahedral 
elements.
We conclude in Section \ref{sec:num} with numerical results validating our theoretical findings.

\section{Main results}
\label{sec:sequence}
In this section, we summarize the  main results concerning 
the Bernstein-B\'ezier basis functions for the 
$H(\mathrm{curl})$- and $H(\mathrm{div})$-conforming finite elements
on a tetrahedron $T := \mathrm{conv}(\b x_1,\b{x}_2, \b x_3, \b x_4)$.
Combined with the Bernstein-B\'ezier basis for $H^1$-conforming finite elements \cite{Ainsworth14}, 
we thereby obtain 
a set of basis functions, which 
respect the differential operators, 
for the two polynomial de Rham sequences in Table \ref{table-ex}, which
corresponding to \NED spaces of the first and second kinds.
We begin by collecting  some useful notation with  which to present the bases.

\subsection{Notation, index sets, and domain points}
\label{sec:notation}
Standard multi-index notations will be used throughout. In particular, for
$\b{\alpha}\in\ZZ_+^{d+1}$, we define $|\b{\alpha}|=\sum_{k=1}^{d+1}\alpha_k$,
$\b{\alpha}!=\prod_{k=1}^{d+1} \alpha_k!$ and
${|\b{\alpha}|\choose\b{\alpha}}=|\b{\alpha}|!/\b{\alpha}!$. If
$\b{\lambda} = (\lambda_1,\lambda_2,\cdots, \lambda_{d+1})\in\RR^{d+1}$, then we define 
$\b{\lambda}^\b{\alpha}=\prod_{k=1}^d
\lambda_k^{\alpha_k}$. Given a pair $\b{\alpha},\b{\beta}\in\ZZ_+^{d+1}$,
$\b{\alpha}\le\b{\beta}$ if and only if $\alpha_k\le\beta_k$, $k=1,\ldots,d+1$
and, in this case, ${\b{\beta}\choose\b{\alpha}}= \prod_{k=1}^{d+1}
{\beta_k\choose \alpha_k}$. 
An indexing set $\IND{n}$ is defined by
\begin{equation}\label{IdDef}
    \IND{n} = \left\{ \b{\alpha} \in \ZZ_+^{d+1}: \ |\b{\alpha}|=n \right\}.
\end{equation}
In this paper, 
we are mostly concerned with the three dimensional case $d=3$.
For $d=3$, let $\b{e}_\ell\in\INThree{1}$ ($\ell=1,2,3, 4$) denote the multi-index whose $\ell$-th entry is
unity and whose remaining entries vanish.

Let $T$ be a non-degenerate tetrahedron in $\RR^3$ given by 
$\mathrm{conv}(\b{x}_1,\b{x}_2,\b{x}_3,\b{x}_{4})$.
The set
$\c{D}_n(T)=\{\b{x}_\b{\alpha}: \b{\alpha}\in\INThree{n}\}$ consists of the
\emph{domain points} of the tetrahedron $T$ defined by
\begin{equation}
\label{DPTet}
    \b{x}_\b{\alpha} = \frac{1}{n}\sum_{k=1}^{4} \alpha_k\b{x}_k.
\end{equation}
We denote the 
edge-based, face-based, and cell-based bubble index subsets of $\INThree{n}$ as follows:
for an edge $E=(\b x_{e_1},\b x_{e_2})\subset T$,
\begin{subequations}
 \label{indices}
 \begin{align}
\mathring{  \c{I}}_3^n(E) = \{\b\alpha\in \c{I}_3^n:\quad 
  \alpha_{i} >0 \text{ if } i\in \{e_1,e_2\},  
  \;\; \alpha_{i} =0 \text{ otherwise}
  \}  
 \end{align}
for a face $F=(\b x_{f_1},\b x_{f_2},\b x_{f_3})\subset T$,
 \begin{align}
\mathring{  \c{I}}_3^n(F)&\; = \{\b\alpha\in \c{I}_3^n:\quad 
  \alpha_{i} >0 \text{ if } i\in \{f_1,f_2,f_3\}.
  \;\; \alpha_{i} =0 \text{ otherwise}
  \}
 \end{align}
 and for the element $T$,
 \begin{align}
\mathring{  \c{I}}_3^n&\; = \{\b\alpha\in \c{I}_3^n:\quad 
  \alpha_{i} >0 \text{ for all } i\},
 \end{align}
 The collection of these bubble index sets is denoted as 
 \begin{align}
  \check{\c I}_3^{n} :=&\; \INThree{n}-\{n\b e_{\ell}:\;\; 1\le \ell\le 4\}\\
=&\; \bigoplus_{E\in \mathcal{E}(T)}\mathring{  \c{I}}_3^n(E)
\bigoplus_{F\in \mathcal{F}(T)}\mathring{  \c{I}}_3^n(F)
\bigoplus\mathring{  \c{I}}_3^n.
\nonumber
 \end{align}
where $\mathcal{E}(T)$ and $\mathcal{F}(T)$ 
are the collection of edges and faces, respectively, of the tetrahedron $T$.

We also denote the set of indices whose corresponding domain points lie on 
the face $F = (\b x_{f_1},\b x_{f_2},\b x_{f_3})$ as
\begin{align}
\label{id-face-bubble}
 \tilde{\c{I}}^{n}_{3}(F):=\{\b \alpha\in \INThree{n}:\;
\alpha_{i}=0,\;\;\; i \not\in \{f_1,f_2,f_3\}\}.
\end{align}
Moreover, the index subsets  of co-dimension $1$ for
$\tilde{\c{I}}^{n}_{3}(F)$ and $\INThree{n}$ will also be used:
\begin{align}
\label{id-face-sigle}
 \tilde{\c{I}}^{n}_{3}(F)' := & \tilde{\c{I}}^{n}_{3}(F) - \{\text{a single, arbitrary index in } \tilde{\c{I}}^{n}_{3}(F)\},\\
 \label{id-cell-sigle}
 {{\c{I}}^{n}_{3}}' := & {\c{I}}^{n}_{3} - \{\text{a single, arbitrary index in } {\c{I}}^{n}_{3}\}.
\end{align}
\end{subequations}

\subsection{Barycentric coordinates and Bernstein polynomials}
The \emph{barycentric coordinates} of a point $\b{x}\in \RR^{3}$ with respect to
the tetrahedron $T$ are given by $\b{\lambda}=(\lambda_1,\lambda_2,\lambda_3,\lambda_{4})$
and satisfy
\begin{equation}\label{barycentricSum}
    \b{x}= \sum_{k=1}^{4} \lambda_k \b{x}_k;\quad 1 = \sum_{k=1}^{4} \lambda_k.
\end{equation}
The {\it Bernstein polynomials} of degree $n\in\ZZ_+$ associated with $T$ are defined
by
\begin{equation}\label{bernsteinPolyDef}
    B^{n}_\b{\alpha}(\b{x}) = {n\choose\b{\alpha}} \b{\lambda}^\b{\alpha},
    \quad\b{\alpha}\in\INThree{n}.
\end{equation}

\subsection{Geometric decomposition of the finite element spaces and their basis functions}
We follow the popular convention whereby basis functions are identified with dots placed at appropriate
domain points.
Our main result is collected in the following theorem, whose proof is postponed to 
Section \ref{sec:curl} and Section \ref{sec:div} where we study details of the 
$H(\mathrm{curl})$-basis and $H(\mathrm{div})$-basis, respectively.

\begin{theorem}
\label{thm:main}
Let $\mathcal{E}(T)$ be the collection of edges of $T$, and $E\in \mathcal{E}(T)$ be any edge.
Let $\mathcal{F}(T)$ be the collection of faces of $T$, and $F\in \mathcal{E}(T)$ be any face.
The sets of functionsdefined in Table \ref{table-span}
form a basis for the corresponding space for arbitrary polynomial order $n$. 

Moreover, 
the following geometric decomposition holds
for the finite element spaces in
the two sequences in
Table \ref{table-ex}:
\begin{align*}
  \pol_n &\;= \SSS_{low}\oplus_{E\in \mathcal{E}(T)}\SSS_n^E
 \oplus_{F\in \mathcal{F}(T)}\SSS_n^F\oplus \SSS_n^T\\
\pol_{n}^3
\oplus\b x
\times{\widetilde\pol}_{n}^3
 &\;= \EEE_{low}\oplus_{E\in \mathcal{E}(T)}\Grad\SSS_{n+1}^E
 \oplus_{F\in \mathcal{F}(T)}
\left( \Grad\SSS_{n+1}^F\oplus
\EEE_n^F
\right)
 \oplus \left(\Grad\SSS_{n+1}^T
 \oplus
\EEE_{n+1}^T\right)\\
\pol_{n}^3
 &\;= \EEE_{low}\oplus_{E\in \mathcal{E}(T)}\Grad\SSS_{n+1}^E
 \oplus_{F\in \mathcal{F}(T)}
\left( \Grad\SSS_{n+1}^F\oplus
\EEE_{n-1}^F
\right)
 \oplus \left(\Grad\SSS_{n+1}^T
 \oplus
\EEE_{n}^T\right)\\
\pol_{n}^3
\oplus \b x\,\widetilde\pol_n
 &\;= \VVV_{low} \oplus_{F\in \mathcal{F}(T)}
\Curl\EEE_n^F
 \oplus \left(\Curl
\EEE_{n+1}^T
\oplus
\VVV_n^T
\right)\\
\pol_{n}^3
 &\;= \VVV_{low} \oplus_{F\in \mathcal{F}(T)}
\Curl\EEE_n^F
 \oplus \left(\Curl
\EEE_{n+1}^T
\oplus
\VVV_{n-1}^T
\right)\\
 \pol_{n} &\;= \WWW_{low} 
 \oplus\Div\VVV_n^T
\end{align*}
\end{theorem}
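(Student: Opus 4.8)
The plan is to derive the basis claim from the geometric decomposition: the functions listed in Table~\ref{table-span} are precisely the generators of the direct summands appearing in the six displayed identities, so once each identity is shown to be a genuine \emph{internal} direct sum exhausting the space on its left, with each summand spanned by an independent subset of the tabulated functions, the basis property follows at once. I would organise the argument around three ingredients: the exactness of the two polynomial de~Rham sequences of Table~\ref{table-ex} (a classical property of the \NED and Raviart--Thomas spaces, cf.~\cite{ArnoldFalkWinther09}); the standard geometric Bernstein decomposition of the scalar space $\pol_n=\SSS_{low}\oplus_{E}\SSS_n^E\oplus_F\SSS_n^F\oplus\SSS_n^T$ indexed by the bubble sets $\mathring{\c I}_3^n(E)$, $\mathring{\c I}_3^n(F)$, $\mathring{\c I}_3^n$ \cite{Ainsworth14}; and the \emph{local exact sequence} identities, asserting that $\Grad$ of the high-order scalar bubbles, $\Curl$ of the non-gradient $\EEE_\bullet$, and $\Div$ of the non-curl $\VVV_\bullet$ land among the tabulated functions, to be verified by direct computation when the functions are constructed in Sections~\ref{sec:curl}--\ref{sec:div}.

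First I would treat the $H(\mathrm{curl})$ identity. Its right-hand side separates into a \emph{gradient} block $\Grad\SSS_{n+1}^E\oplus\Grad\SSS_{n+1}^F\oplus\Grad\SSS_{n+1}^T$ and a \emph{non-gradient} block $\EEE_n^F\oplus\EEE_{n+1}^T$, together with the low-order functions $\EEE_{low}$. The gradient block is independent because $\Grad$ is injective on the non-constant polynomials and the high-order scalar bubbles are independent by the scalar decomposition. For the non-gradient block I would use exactness, $\ker\Curl=\Grad\,\pol_{n+1}$: a vanishing combination drawn from the non-gradient block that also meets the gradient block is curl-free, so its image under $\Curl$ vanishes; since $\Curl$ sends $\EEE_n^F$ and $\EEE_{n+1}^T$ to the tabulated $H(\mathrm{div})$ functions $\Curl\EEE_n^F$ and $\Curl\EEE_{n+1}^T$, independence of those images forces the coefficients to vanish and simultaneously disposes of the intersection with the gradient block. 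The same pattern, one step down, handles the $H(\mathrm{div})$ identity through $\ker\Div=\Curl\bigl(\pol_{n}^3\oplus\b x\times\widetilde\pol_n^3\bigr)$ and the fact that $\Div$ sends $\VVV_n^T$ (resp.\ $\VVV_{n-1}^T$) to independent members of $\pol_n$; the base case is the $L^2$ identity $\pol_n=\WWW_{low}\oplus\Div\VVV_n^T$, which records that $\Div$ restricted to the interior div-bubbles is injective with image complementary to $\WWW_{low}$.

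The genuine obstacle is not this bookkeeping but the \emph{base independence} feeding it: one must show that the explicit Bernstein-vector functions $\EEE_\bullet$, $\VVV_\bullet$ and their images $\Curl\EEE_\bullet$, $\Div\VVV_\bullet$ are linearly independent, and this is delicate because multiplying Bernstein polynomials by $\b x=\sum_k\lambda_k\b x_k$ and raising degrees introduces nontrivial relations among Bernstein polynomials of differing degrees. I would defuse this by working through the tangential and normal trace operators on the edges and faces of $T$. Functions attached to distinct sub-simplices are to be constructed with disjoint trace support, which makes the edge-, face- and interior-blocks decouple, so that independence may be checked one block at a time; within a single block the question collapses to the classical linear independence of the Bernstein polynomials of a fixed degree over the relevant sub-simplex, for which the index sets $\mathring{\c I}_3^n(E)$, $\mathring{\c I}_3^n(F)$, $\mathring{\c I}_3^n$ supply exactly the surviving degrees of freedom.

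Finally I would close each identity with a dimension count: having shown the tabulated generators to be independent, it suffices to verify that their number matches $\dim\bigl(\pol_n^3\oplus\b x\times\widetilde\pol_n^3\bigr)$, $\dim\bigl(\pol_n^3\oplus\b x\,\widetilde\pol_n\bigr)$, $\dim\pol_n^3$, and so on, whereupon the independent set is forced to be a basis and the sum to be the whole space. The exactness of Table~\ref{table-ex} makes these counts automatic, since the rank--nullity relations tie $\dim\Grad\SSS_\bullet$, $\dim\Curl\EEE_\bullet$ and $\dim\Div\VVV_\bullet$ to the dimensions of the neighbouring bubble spaces. I expect the construction of the $\EEE_\bullet$ and $\VVV_\bullet$ with the required trace-support pattern, together with the attendant trace computations establishing within-block independence, to be where essentially all of the work lies --- precisely the content deferred to Sections~\ref{sec:curl} and~\ref{sec:div}.
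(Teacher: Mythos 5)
Your overall architecture is the same as the paper's: the theorem is reduced to linear independence plus a dimension count, the edge/face blocks are decoupled by tangential and normal traces (so that independence there reduces to the two--dimensional bases of \cite{AinsworthAndriamaroDavydov15} and ultimately to Bernstein independence on sub-simplices), and the gradient/non-gradient and curl/non-curl splittings are handled by applying $\Curl$ and $\Div$ and invoking exactness. This is precisely how Theorems \ref{thm:curl} and \ref{thm:div} are proved.

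There is, however, a genuine gap in the one place where you declare victory too quickly: the claim that ``within a single block the question collapses to the classical linear independence of the Bernstein polynomials of a fixed degree over the relevant sub-simplex.'' This is true for the edge and face blocks but false for the two interior blocks, and that is exactly where the paper has to work. First, the non-gradient cell bubbles are \emph{not} the full family $\{\Xb{n+1}{\ell,\b\alpha}\colon 1\le\ell\le 4,\ \b\alpha\in\mathring{\c I}_3^{n+2}\}$ --- that set is linearly dependent --- but the specific sub-selection $\ell\in\{1,2\}$ for all interior $\b\alpha$ together with $\ell=3$ only when $\alpha_3=1$; your proposal never confronts why this particular selection is independent and has the right cardinality $2\binom{n+1}{3}+\binom{n}{2}$. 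Proving independence of their curls (Lemma \ref{lemma:cell-curl-bubble}) requires expanding $\Curl\Xb{n+1}{\ell,\b\alpha}$ in the frame $\b t_{12},\b t_{23},\b t_{31}$, where the coefficients are \emph{differences} of Bernstein polynomials with shifted indices, and then running an induction on $\alpha_4$ that factors out powers of $\lambda_4$ and evaluates on the edge $E_{34}$ and the face $F_4$; no single application of Bernstein independence does this. Second, the independence of $\Div\VVV_n^T$ (Lemma \ref{lemma:div-bubble}) likewise does not reduce to Bernstein independence of a fixed degree: it leads to a square linear system relating the coefficients $c_{\b\alpha}$ and $c_{\b\alpha+\b e_i-\b e_j}$, which the paper resolves by an irreducibility and weak diagonal dominance argument showing the kernel is the constants. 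Without these two ingredients your induction through the exact sequence has nothing to stand on, since the step ``independence of the images under $\Curl$ (resp.\ $\Div$) forces the coefficients to vanish'' presupposes exactly the independence results that are the technical heart of the paper.
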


\begin{table}[!ht]
\caption{Local spaces and their basis.
 Here $\b\omega_{ij}$ is the lowest-order edge element given in  \eqref{ND3D-lowestorder} in Section \ref{sec:curl},
 $\b\chi_\ell$ is the lowest-order face element given in \eqref{DIV3D-lowestorder}
 in Section \ref{sec:div}, $\Xa{FT,n}{\b\alpha}$ 
  is   the $\b H(\mathrm{curl})$-face bubble given in \eqref{BB-facebubble} and 
$\Xb{T,n}{\ell,\b\alpha}$ is the $\b H(\mathrm{curl})$-cell bubble given in \eqref{BB-cellbubble}
  in Section \ref{sec:curl}, and $\Xc{n}{\b\alpha}$ 
  is 
  the $\b H(\mathrm{div})$-bubble function given in \eqref{BB-divbubble} in Section \ref{sec:div}. 
}
\begin{tabular}{r  l | r l}
\hline
\noalign{\smallskip}
\multicolumn{4}{c}{Lowest-order elements}\\
\hline
$ {\SSS_{{low}}}:$ &$\mathrm{span}\{\lambda_i:\; 1\le i\le 4\}$&
$ {\EEE_{{low}}} :$&$\mathrm{span}\{\b\omega_{ij}:\;1\le i<j\le 4\}$\vspace{.1cm}\\
$ {\VVV_{{low}}} :$&$\mathrm{span}\{\b\chi_\ell:\; 1\le \ell\le 4\}$&
$ {\WWW_{{low}}} :$&$\mathrm{span}\{1\}$\\
\hline
\noalign{\smallskip}
\multicolumn{4}{c}{$H^1$ edge, face, cell bubbles and their gradients}\\
\hline
 ${\SSS^E_{{n}}} :$ &$\;\mathrm{span}\{  \BB{n}{\b{\alpha}}:\; \b\alpha\in 
 \mathring{\c I}_3^n(E) \}$&
$ \Grad{\SSS^E_{{n}}} :$&$\mathrm{span}\{  \Grad\BB{n}{\b{\alpha}}:\; \b\alpha\in 
 \mathring{\c I}_3^n(E) \}$\vspace{.1cm}\\
$ {\SSS^F_{{n}}} :$&$\;\mathrm{span}\{  \BB{n}{\b{\alpha}}:\;\b\alpha\in 
 \mathring{\c I}_3^n(F) \}$&
$ \Grad{\SSS^F_{{n}}} :$&$\;\mathrm{span}\{  \Grad\BB{n}{\b{\alpha}}:\; \b\alpha\in 
 \mathring{\c I}_3^n(F) \}$\vspace{.1cm}\\
$ {\SSS^T_{{n}}} :$&$\;\mathrm{span}\{  \BB{n}{\b{\alpha}}:\;\b\alpha\in 
 \mathring{\c I}_3^n \}$&
 $\Grad{\SSS^T_{{n}}} :$&$\;\mathrm{span}\{  \Grad\BB{n}{\b{\alpha}}:\; \b\alpha\in 
 \mathring{\c I}_3^n \}$\\
\hline
\noalign{\smallskip}
\multicolumn{4}{c}{$\b H(\mathrm{curl})$ face, cell bubbles and their curls}\\
\hline
$ {\EEE^F_{{n}}} :$&$\;\mathrm{span}\{ \Xa{FT,n}{\b\alpha}:\;\b\alpha\in 
 \tilde{\c I}_3^n(F)' \}$&
$ \Curl{\EEE^F_{{n}}} :$&$\;\mathrm{span}\{  \Curl\Xa{FT,n}{\b\alpha}:\; \b\alpha\in 
 \tilde{\c I}_3^n(F)' \}$\\
\noalign{\smallskip}
$ {\EEE^T_{{n}}} :$&$\;\mathrm{span}\oplus_{\ell=1}^2\{
 \Xb{T,n}{\ell,\b\alpha}:\;\b\alpha\in 
 \mathring{\c I}_3^{n+1}\}$
 &$\Curl {\EEE^T_{{n}}} :$&$\;\mathrm{span}\oplus_{\ell=1}^2\{
 \Curl\Xb{T,n}{\ell,\b\alpha}:\;\b\alpha\in 
 \mathring{\c I}_3^{n+1}\}$
\\
&$ \quad\oplus\{
 \Xb{T,n}{3,\b\alpha}:\;\b\alpha\in 
 \mathring{\c I}_3^{n+1},\alpha_3 = 1\}$&
&$\quad\oplus\{
\Curl \Xb{T,n}{3,\b\alpha}:\;\b\alpha\in 
 \mathring{\c I}_3^{n+1},\alpha_3 = 1\}$\\
 \hline
 \noalign{\smallskip}
\multicolumn{4}{c}{$\b H(\mathrm{div})$ cell bubbles and their divergences}\\
\hline
$ {\VVV^T_{{n}}} :$&$\;\mathrm{span}\{  
 \Xc{n}{\b\alpha}:\;\b\alpha\in 
 {\c I}_3^{n'} \}$&
$ \Div{\VVV^T_{{n}}} :$&$\;\mathrm{span}\{  
\Div \Xc{n}{\b\alpha}:\;\b\alpha\in 
 {\c I}_3^{n'} \} $\\
 \hline
\end{tabular}
\label{table-span}
\end{table}

We illustrate
the result in Theorem \ref{thm:main}
for each sequence in Table \ref{table-geo} and indicate the dimension of the spaces for each 
case.

We also collect different types of basis functions and the corresponding dimension counts for the 
$\b H(\mathrm{curl})$-space
$
\NDOne{n}
$ 
and the $\b H(\mathrm{div})$-space
$\RT{n}$, along with the domain points (for   $n=3$) in Table \ref{table-dp-curl} and 
Table \ref{table-dp-div}, respectively. 
The index corresponding to the top white circle (for Type 3 $\b H(\mathrm{curl})$-bases, and 
Type 2 and Type 4 $\b H(\mathrm{div})$-bases) in the domain points plots  indicates 
arbitrarily chosen index used in the definition \eqref{id-face-sigle}--\eqref{id-cell-sigle}.
Here for the Type 3 $\b H(\mathrm{curl})$-face bubbles, we only plot the domain points for 
{\it one} face to make the plot more readable. 
Also, a domain point for 
Type 4 $\b H(\mathrm{curl})$-cell bubbles corresponds to $2$ basis functions ($\Xb{n+1}{1,\b\alpha}, \Xb{n+1}{2,\b\alpha}$) if $\alpha_3 >1$, and $3$ basis functions 
($\Xb{n+1}{1,\b\alpha}, \Xb{n+1}{2,\b\alpha}, \Xb{n+1}{3,\b\alpha}$) if 
$\alpha_3 = 1$. 
The same information is presented for
$\b H(\mathrm{div})$-bases.

In order to visualize the nature of the bases, we present contour plots
of some representative basis functions in Figure \ref{fig:curl} and Figure \ref{fig:div}.
  
\begin{table}[!ht]
\caption{Bernstein-B\'ezier basis for the two sequences in 
Table \ref{table-ex}.
Here $\delta = 0$ for the first sequence and 
$\delta = 1$ for the second sequence.
}
\begin{tabular}{l l l l l l l l l}
\vspace{0.04cm}
\sf{low}&
$\SSS_{low}$&
$\!\!\!\!\overset{\Grad}{\longrightarrow}$ 
&
$\EEE_{low}$&
$\!\!\!\!\overset{\Curl}{\longrightarrow}$ 
&
$\VVV_{low}$&
$\!\!\!\!\overset{\Div}{\longrightarrow}$ 
&
$\WWW_{low}$\\
\noalign{\smallskip}
Ndofs& 
$4$&&
$6$&& 
$4$&&
$1$
\\
\hline
\noalign{\smallskip}
\sf{edge}&
$\SSS_{n+1+\delta}^E$&
$\!\!\!\!\overset{\Grad}{\longrightarrow}$ 
&
$\Grad\SSS_{n+1+\delta}^E$&
$\!\!\!\!\overset{\Curl}{\longrightarrow}$ 
&$\{0\}$\\
\noalign{\smallskip}
Ndofs& 
$n+\delta$&&
$n+\delta$&& 
&&
\\
\hline
\noalign{\smallskip}
\sf{face}&
$\SSS_{n+1+\delta}^F$&
$\!\!\!\!\overset{\Grad}{\longrightarrow}$ 
&
$\Grad\SSS_{n+1+\delta}^F\oplus 
\EEE_{n}^F
$&
$\!\!\!\!\overset{\Curl}{\longrightarrow}$ 
&
$\Curl\EEE_{n}^F$&
$\!\!\!\!\overset{\Div}{\longrightarrow}$ 
&
$\{0\}$
\\
\noalign{\smallskip}
Ndofs& 
${n+\delta\choose 2}$
&&
${n+\delta\choose 2}+
{n+2\choose 2}-1
$&& 
${n+2\choose 2}-1$
\\
\hline
\noalign{\smallskip}
\sf{cell}&
$\SSS_{n+1+\delta}^T$&
$\!\!\!\!\overset{\Grad}{\longrightarrow}$ 
&
$\!\!\!\!\Grad\SSS_{n+1+\delta}^T\oplus 
\EEE_{n+1}^T
$&
$\!\!\!\!\overset{\Curl}{\longrightarrow}$ 
&
$\!\!\!\!\Curl\EEE_{n+1}^T\oplus 
\VVV_{n-\delta}^T
$&
$\!\!\!\!\overset{\Div}{\longrightarrow}$ 
&
$\!\!\!\!\Div\VVV_{n-\delta}^T$\\
\noalign{\smallskip}
Ndofs& 
${n+\delta\choose 3}$
&&
$\hspace{-0.6cm}{n+\delta\choose 3}+
2{n+1\choose 3}\!+\!
{n\choose 2}
$& 
&$
\hspace{-0.6cm}
2{n+1\choose 3}\!+\!
{n\choose 2}
\!+\! {n+3-\delta\choose 3}$&$\hspace{-0.4cm}-1$
&
${n+3-\delta\choose 3}-1$
\end{tabular}
\label{table-geo}
\end{table}

\begin{table}[!ht]
\caption{Bases and domain points for the  $\b H(\mathrm{curl})$-bases for the space 
$\NDOne{n}$
}
\begin{tabular}{l c c c c}
& Type 1 & Type 2 & Type 3 & Type 4\\
&low & gradients &
non-gradient & non-gradient\\
&& &face bubbles & cell bubbles\\
\hline
\noalign{\smallskip}
basis&  $\b \chi_i$ & $\Grad \BB{n}{\b\alpha}$&
$ \Xa{FT,n}{\b\alpha}$ 
&
$ \Xb{n+1}{\ell,\b\alpha}$\\
\noalign{\smallskip}
\hline
\noalign{\smallskip}
Ndofs& $6$ &$ {n+4\choose 3}-4$ & 
$4{n+2\choose 2}-4$ &
$2{n+1\choose 3}+{n\choose 2}$
\\
\noalign{\smallskip}
\hline
\noalign{\smallskip}
$\overset{\text{domain points}}{n = 3}$& 
\scalebox{0.12}{\includegraphics{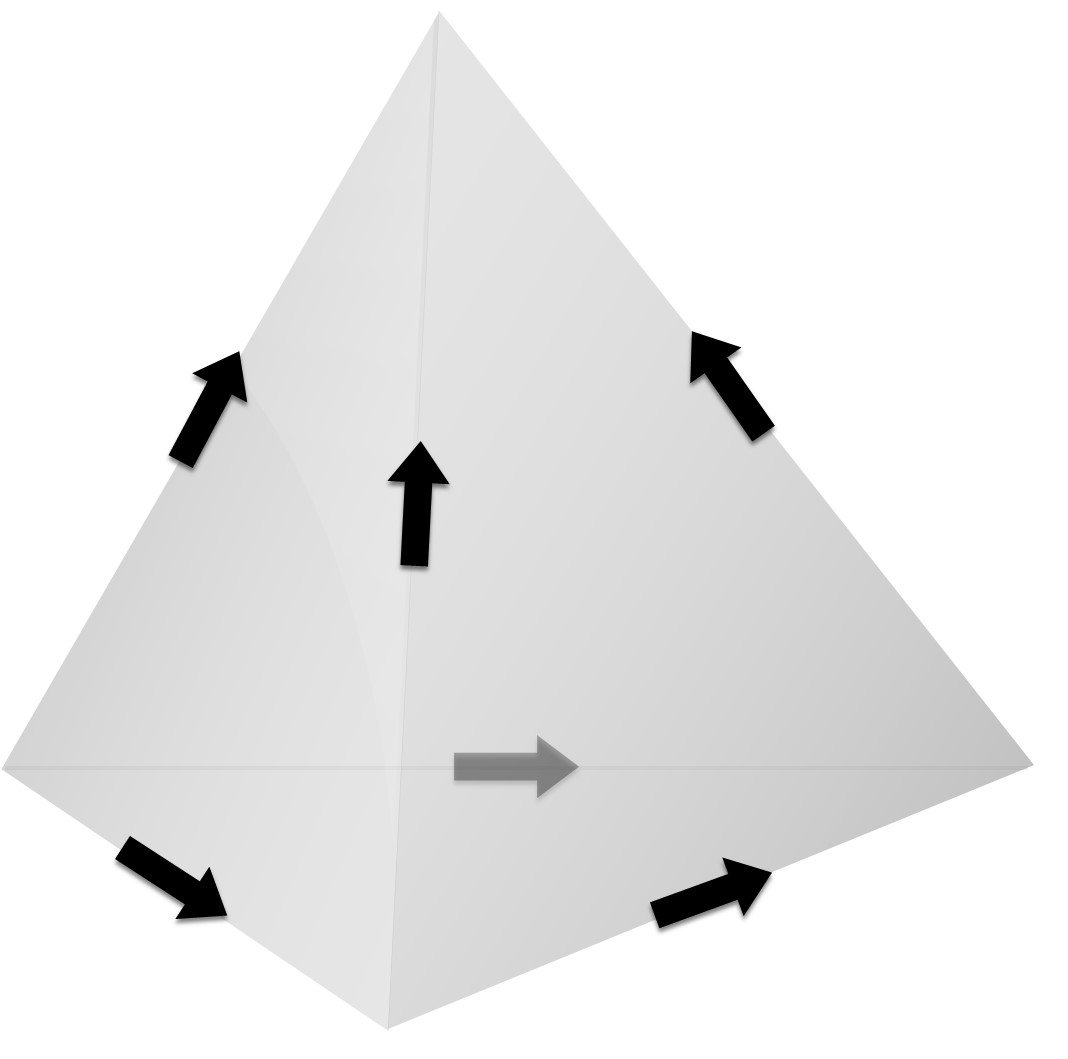}}&
\scalebox{0.12}{\includegraphics{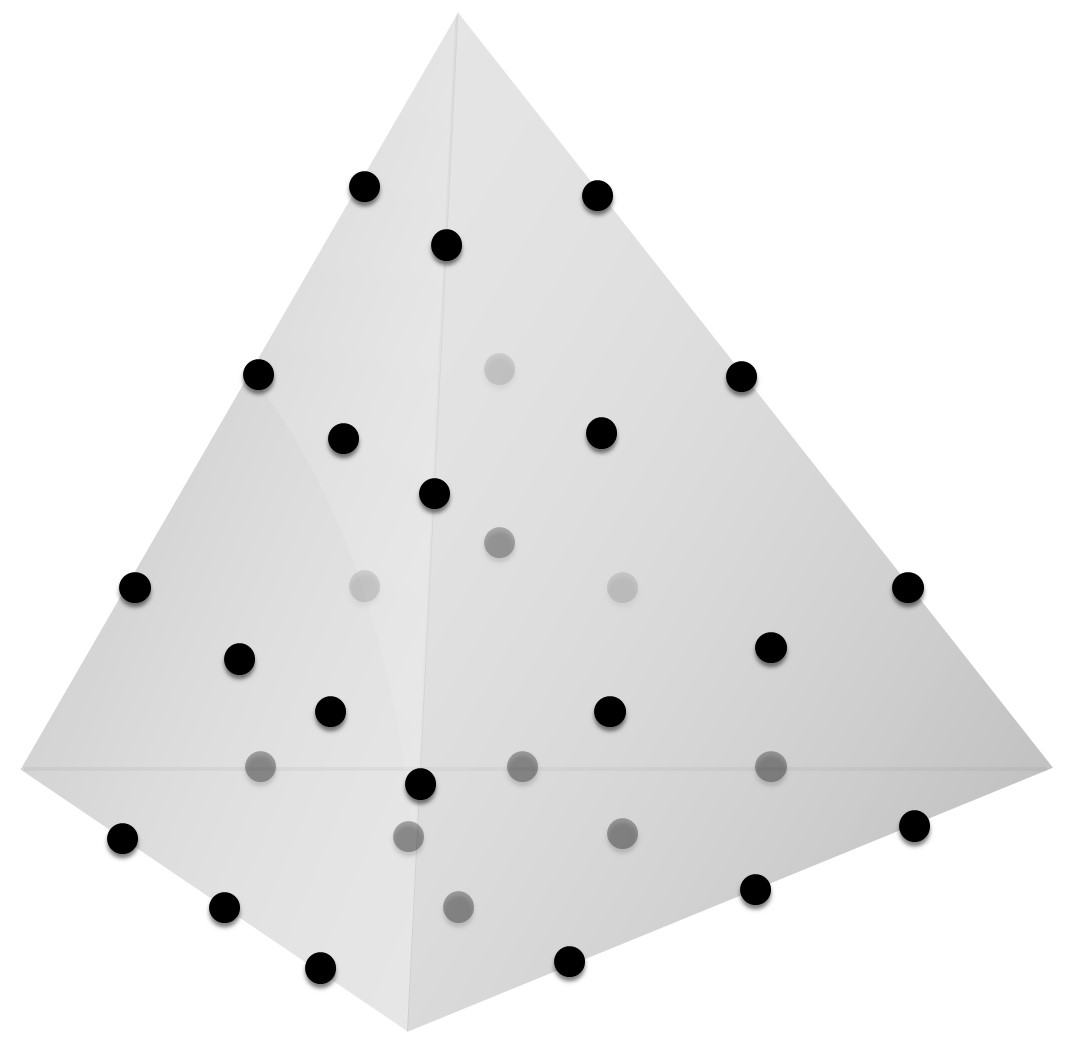}}&
\scalebox{0.12}{\includegraphics{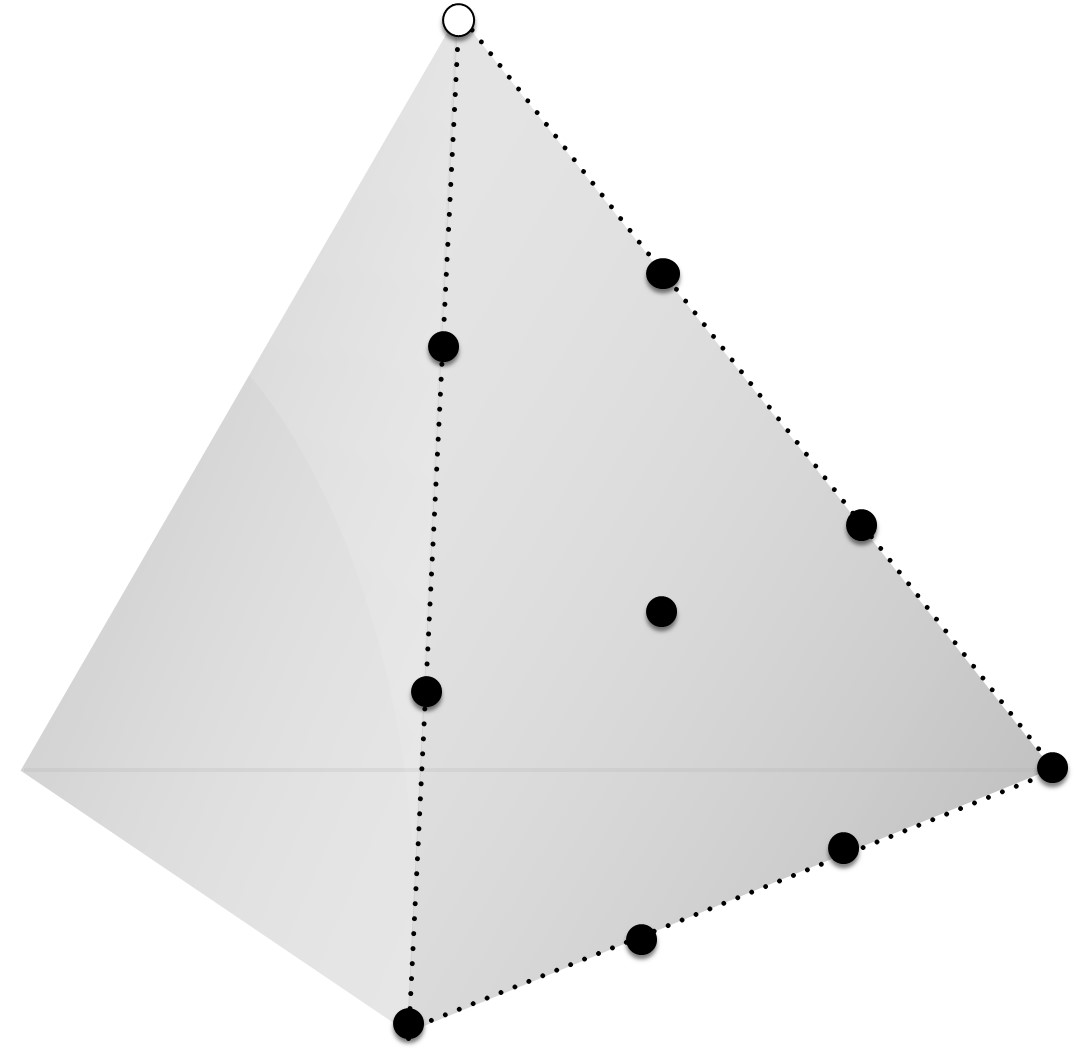}} 
&
\scalebox{0.12}{\includegraphics{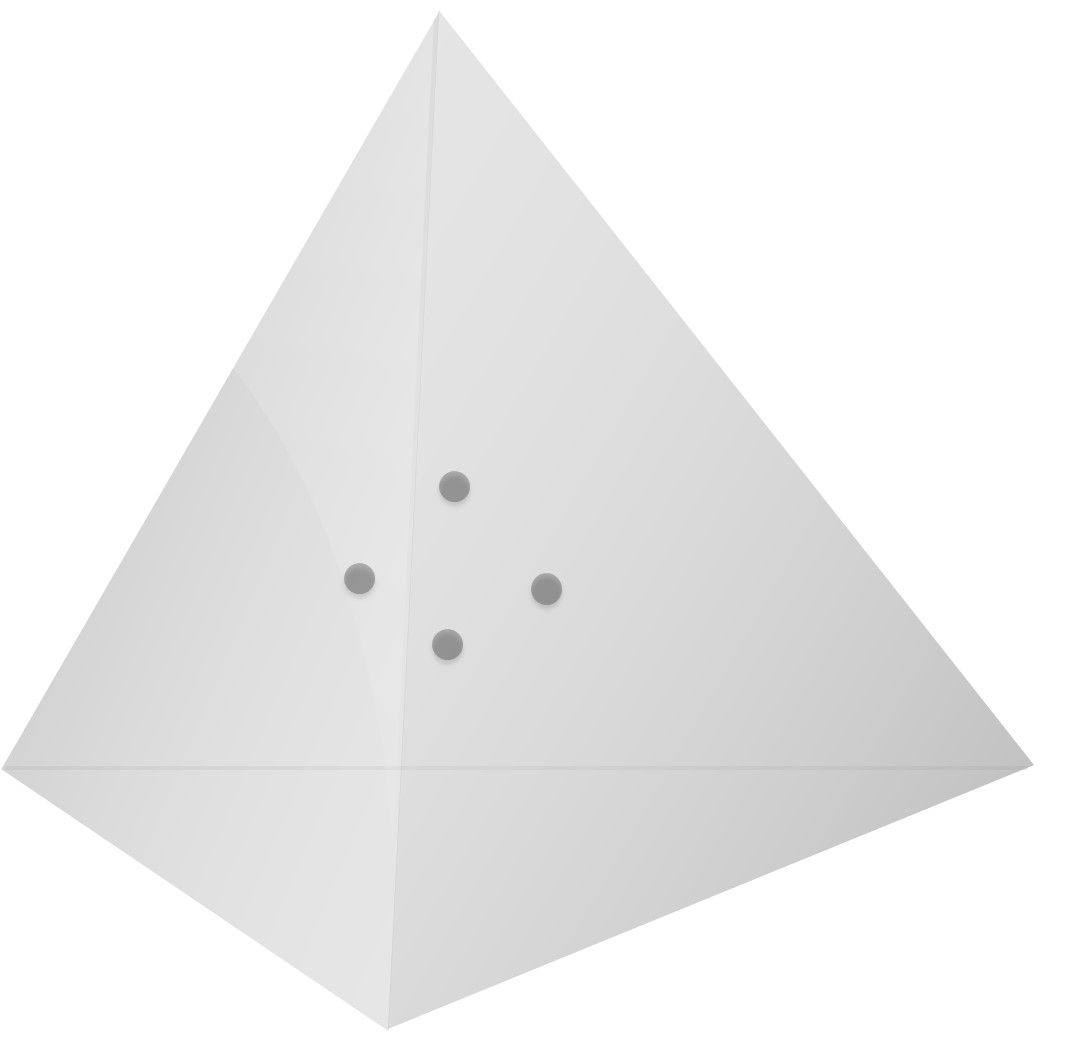}}
\\
\hline
 \end{tabular}
 \label{table-dp-curl}
\end{table}

\begin{table}[!ht]
\caption{Bases and domain points for the  $\b H(\mathrm{div})$-bases for the space 
$\RT{n}$
}
\begin{tabular}{l c c c c}
& Type 1 & Type 2 & Type 3 & Type 4\\
&low & div-free &
div-free & non-curl\\
& & face bubbles &
cell bubbles & cell bubbles\\
\hline
\noalign{\smallskip}
basis&  $\b \chi_i$ &
$\Curl \Xa{FT,n}{\b\alpha}$ 
&
$\Curl \Xb{n+1}{\ell,\b\alpha}$
& 
$\Xc{n}{\b\alpha}$\\
\noalign{\smallskip}
\hline
\noalign{\smallskip}
Ndofs& $4$ & 
$4{n+2\choose 2}-4$ &
$2{n+1\choose 3}+{n\choose 2}$
&
${n+3\choose 3}-1$
\\
\noalign{\smallskip}
\hline
\noalign{\smallskip}
$\overset{\text{domain points}}{n = 3}$& 
\scalebox{0.12}{\includegraphics{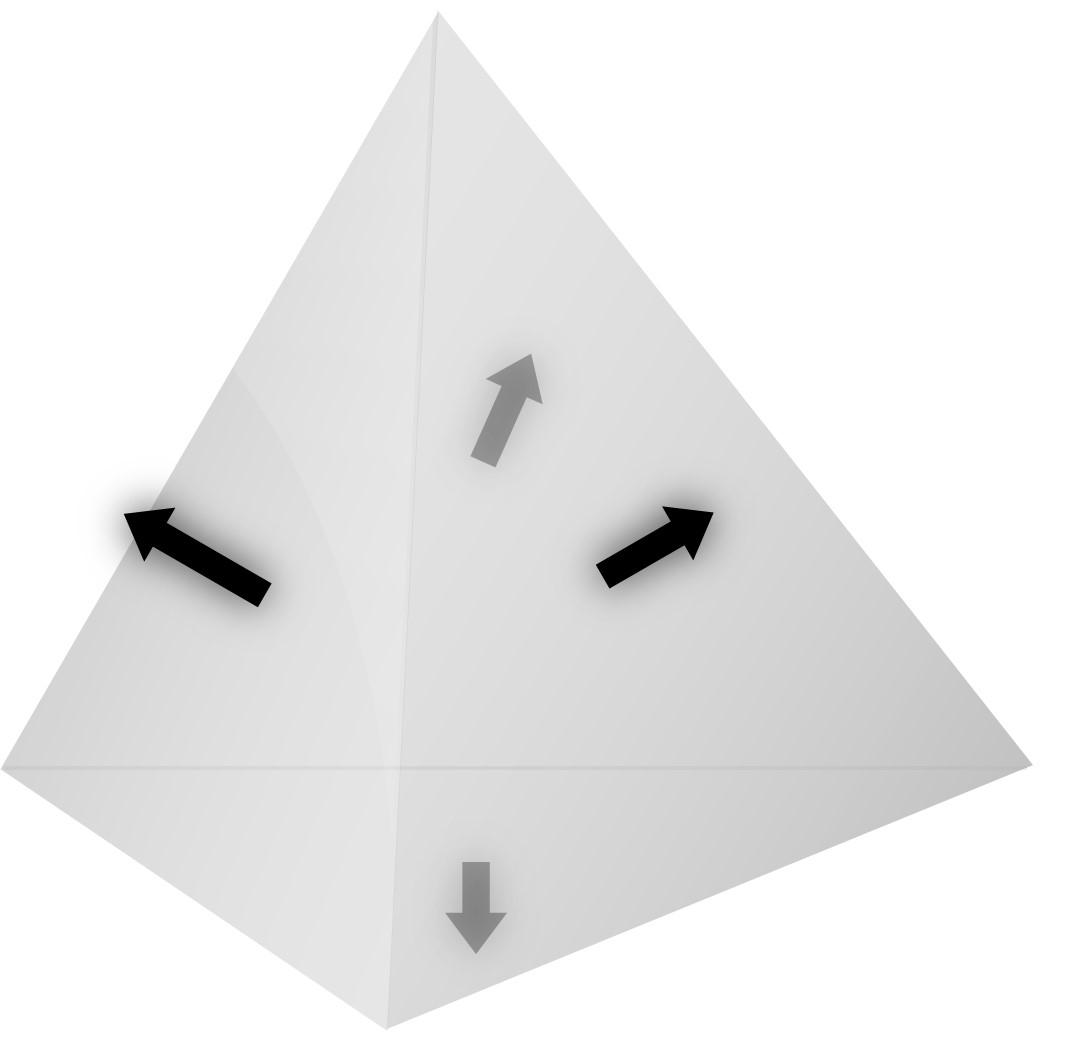}}&
\scalebox{0.12}{\includegraphics{hcurlf3.jpg}}&
\scalebox{0.12}{\includegraphics{hcurlc.jpg}}&
\scalebox{0.12}{\includegraphics{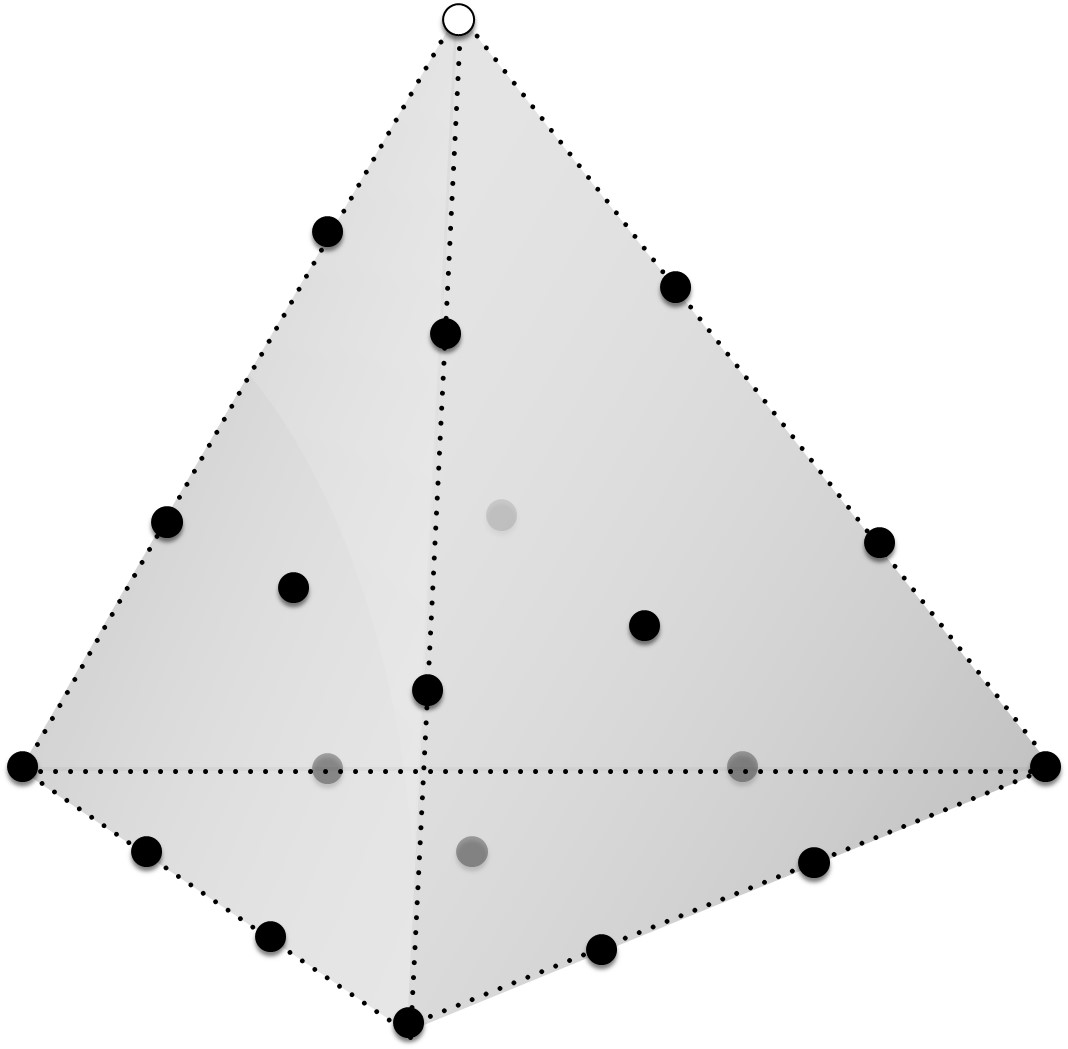}}
\\
\hline
 \end{tabular}
 \label{table-dp-div}
\end{table}

\begin{figure}
\includegraphics[width=.32\textwidth]{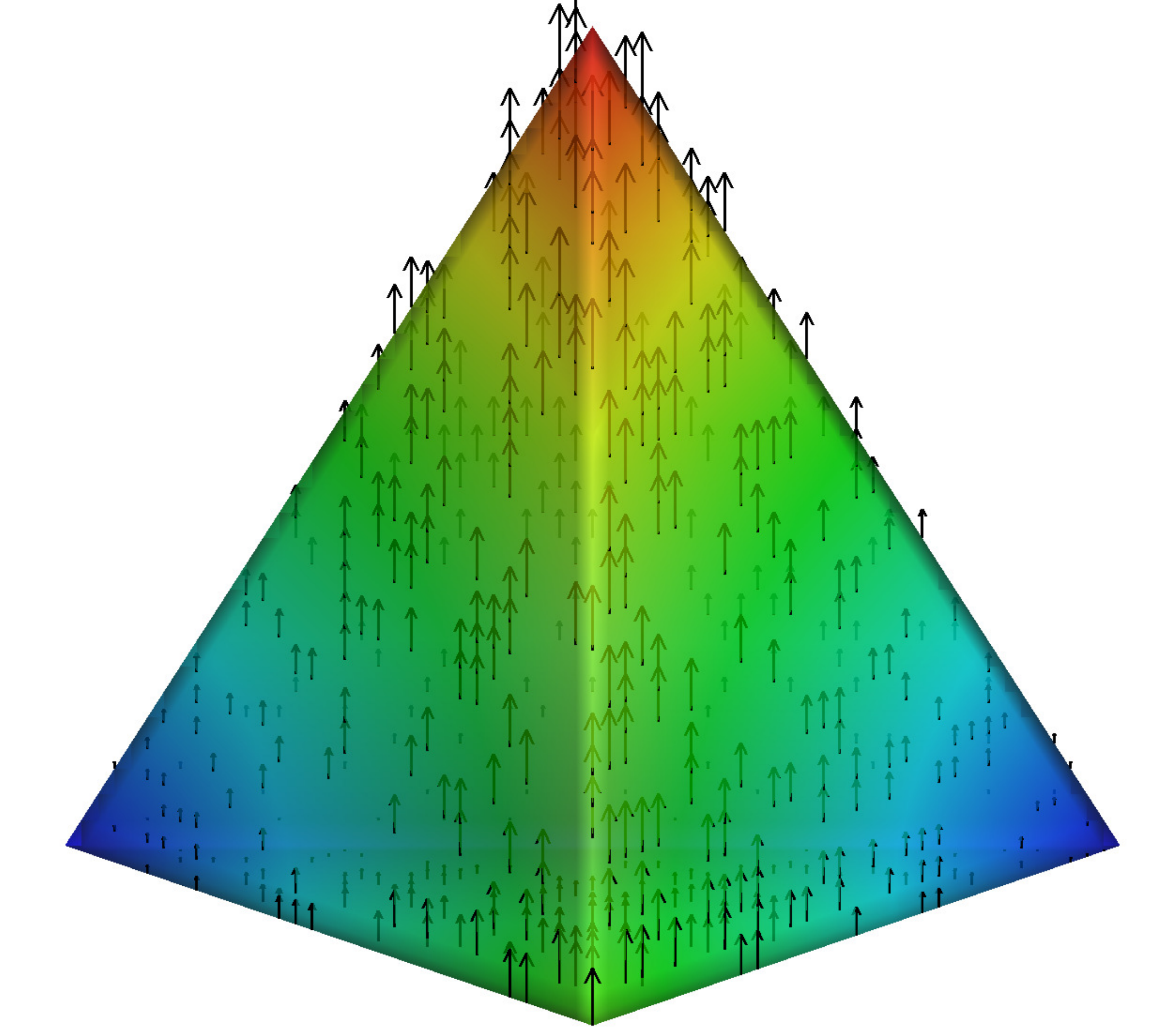}
\includegraphics[width=.32\textwidth]{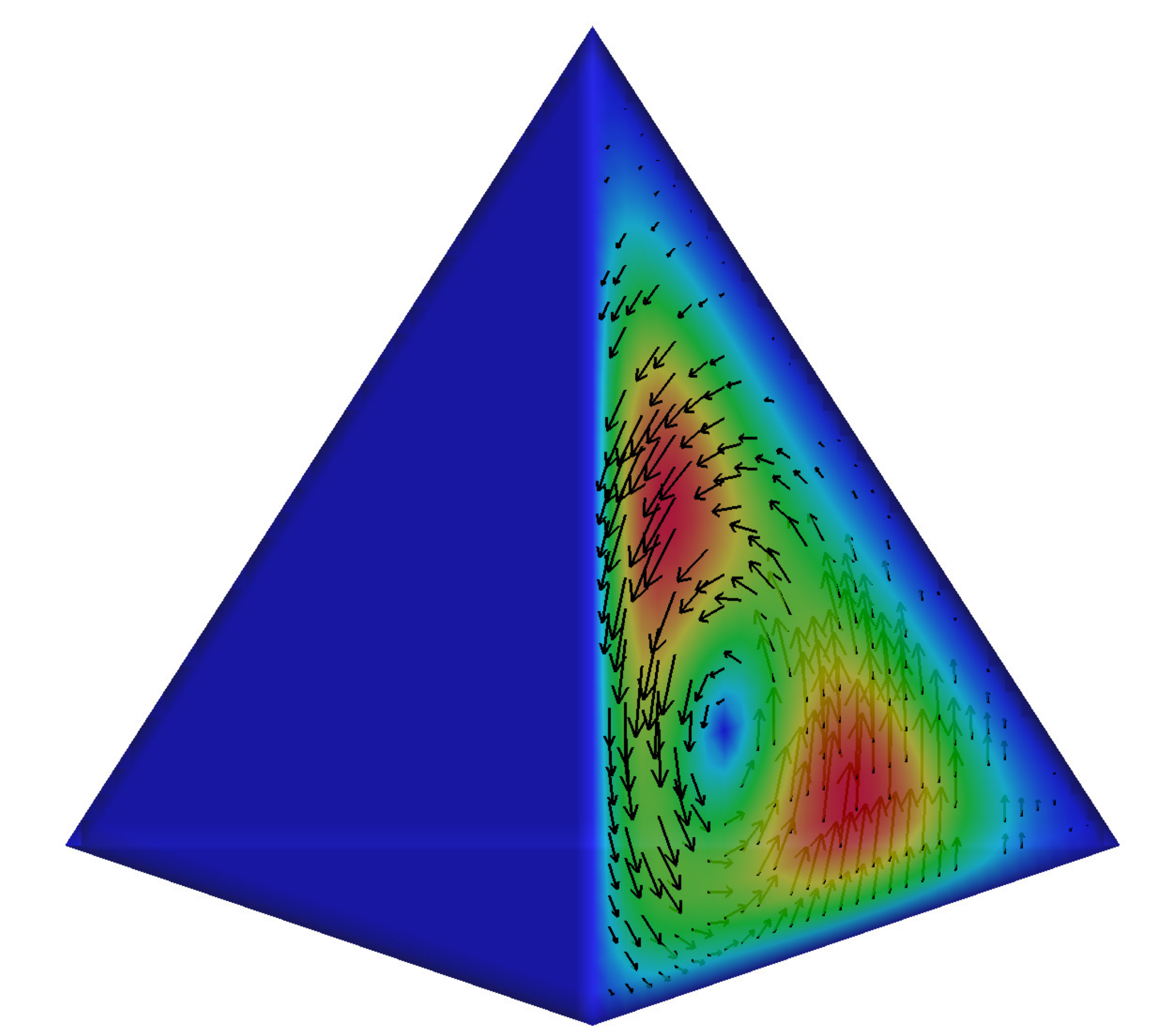}
\includegraphics[width=.32\textwidth]{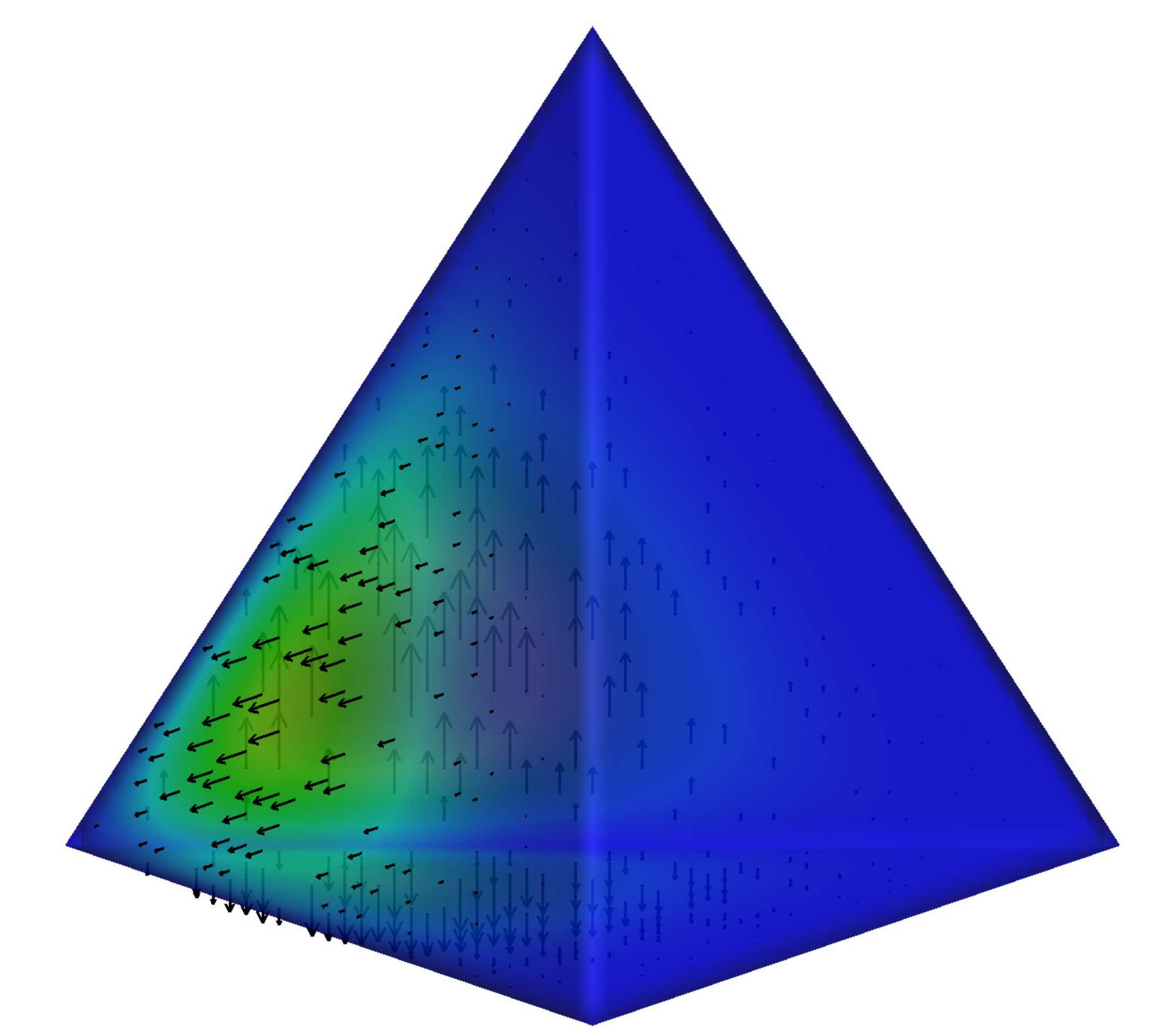}\\
\includegraphics[width=.32\textwidth]{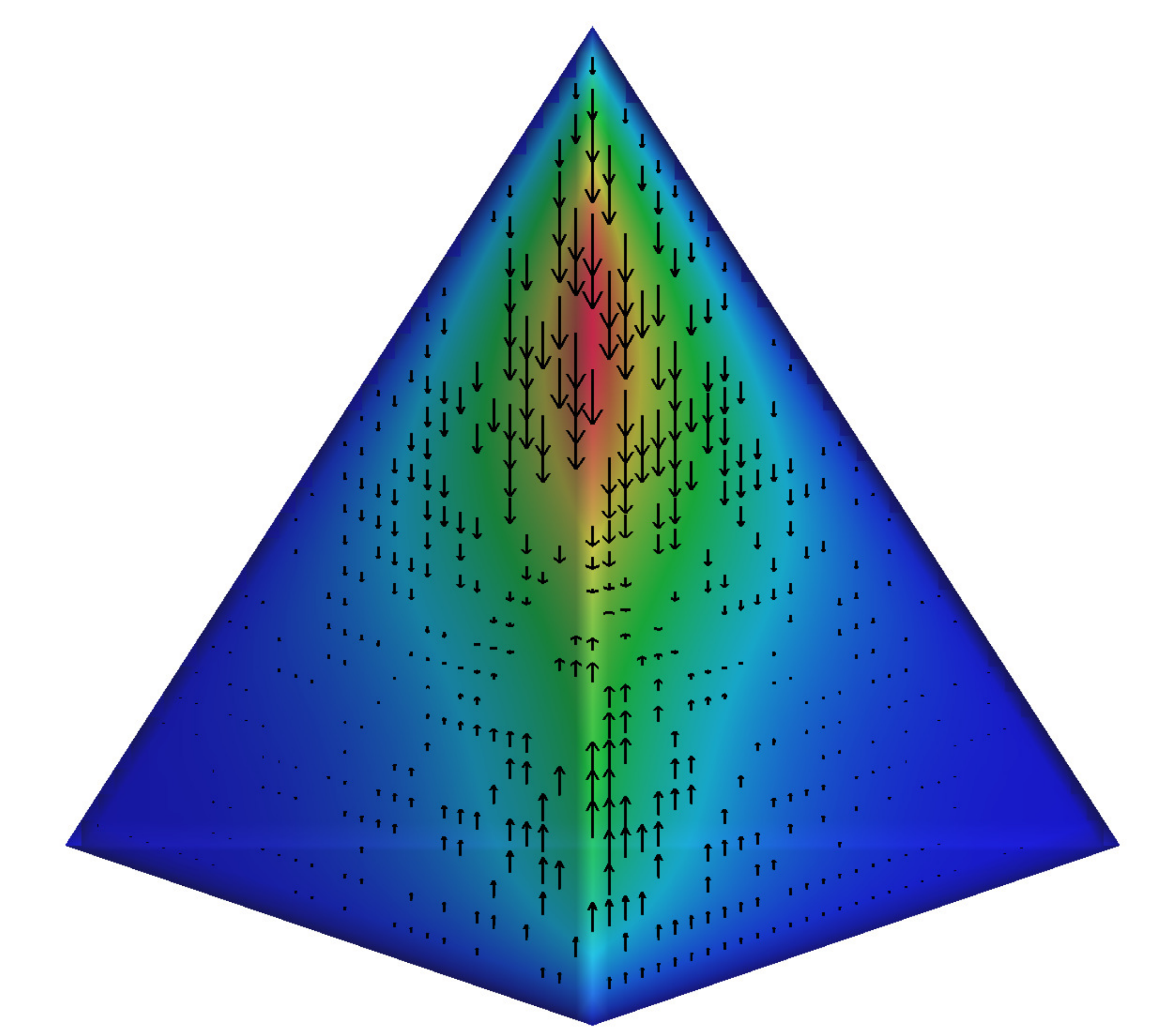}
\includegraphics[width=.32\textwidth]{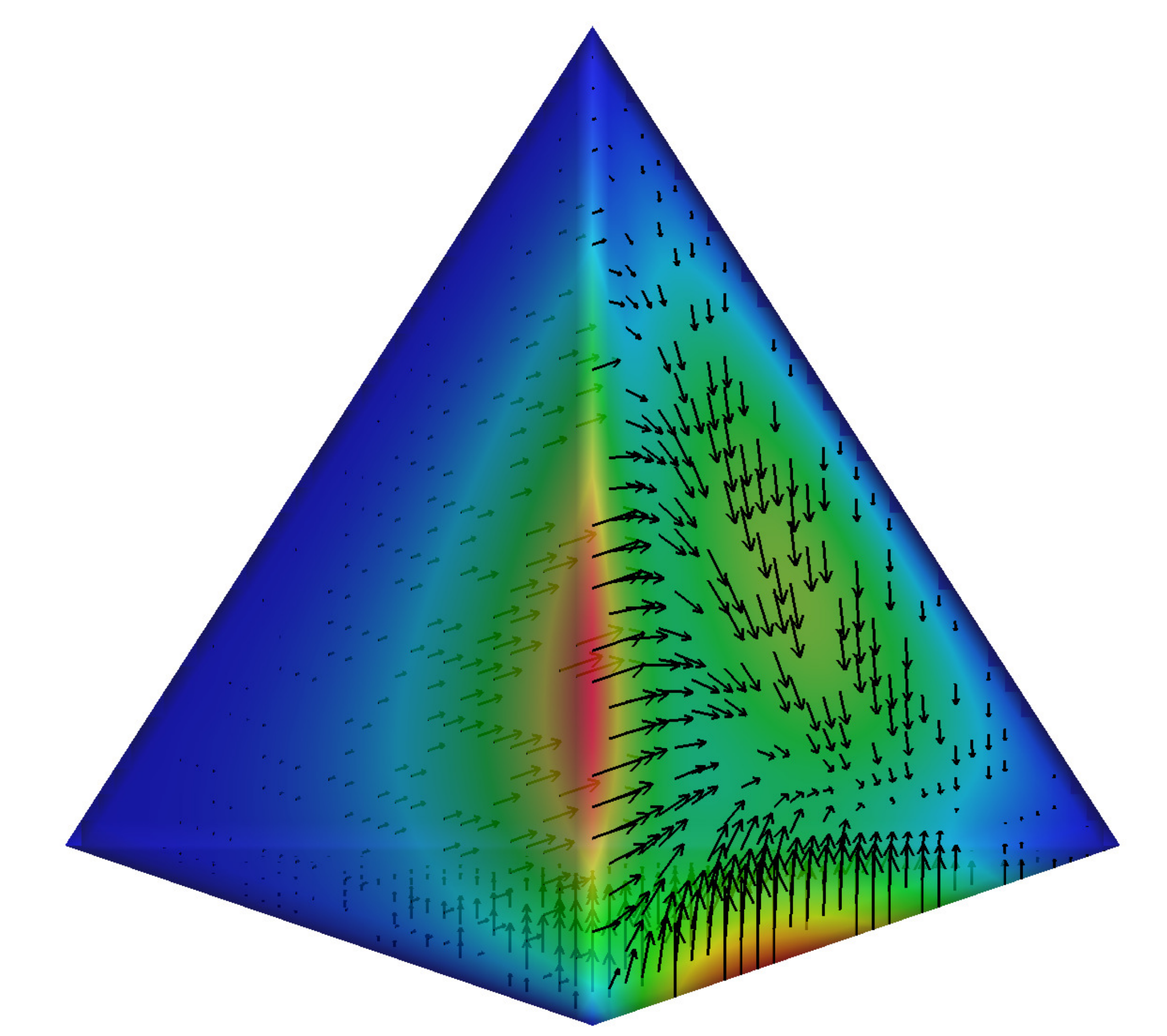}
\includegraphics[width=.32\textwidth]{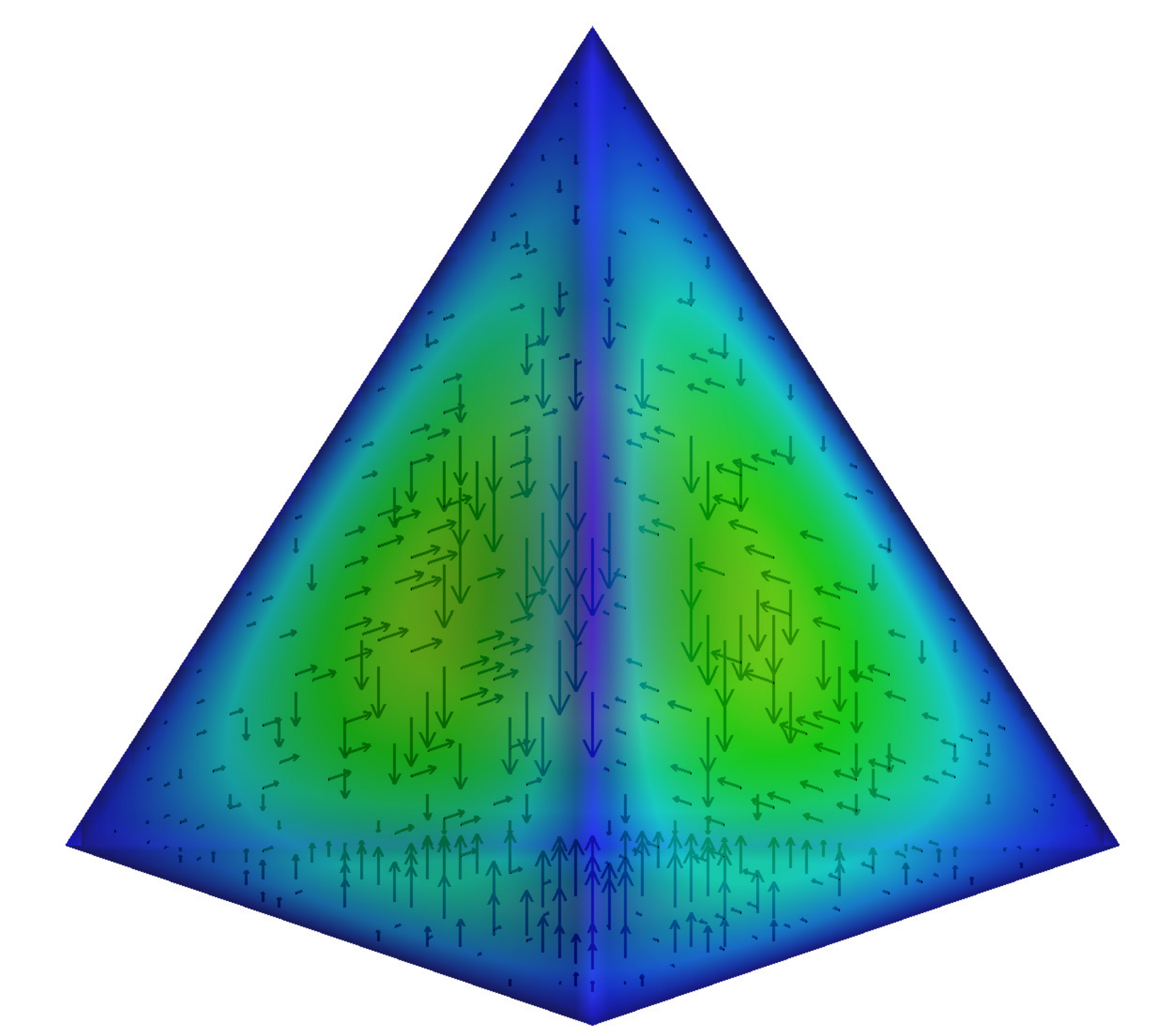}
\caption{Top:  left $\b\omega_{24}$, 
middle $\Xa{FT,4}{(0,2,1,1)}$, right: 
$\Xb{T,4}{1,(2,1,1,1)}$.
Bottom: left $\Grad \BB{4}{(0,2,0,2)}$, 
 middle $\Grad \BB{4}{(0,2,1,1)}$, 
 right $\Grad \BB{4}{(1,1,1,1)}$.
}
\label{fig:curl}
\end{figure}

\begin{figure}
\includegraphics[width=.32\textwidth]{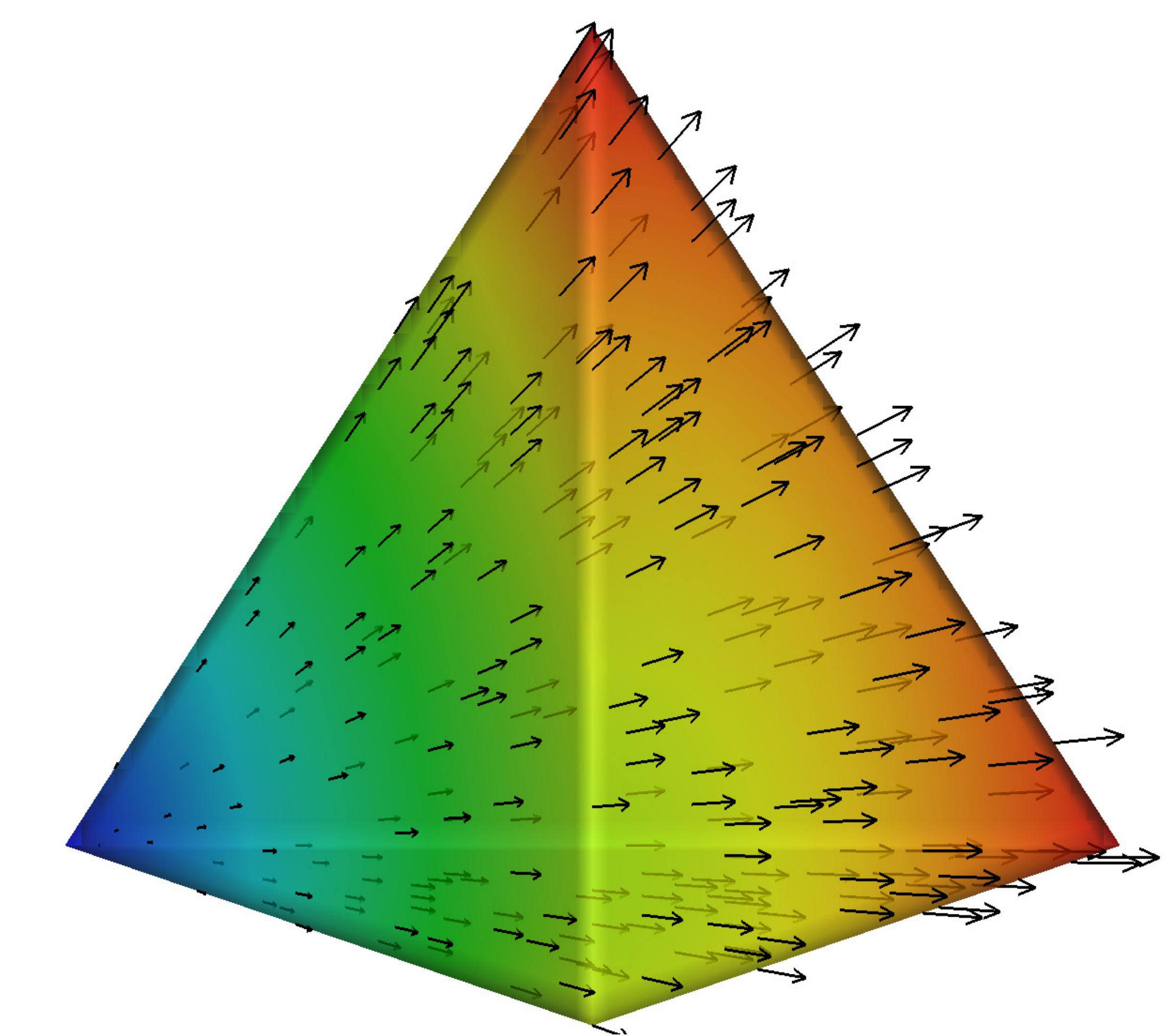}
\includegraphics[width=.32\textwidth]{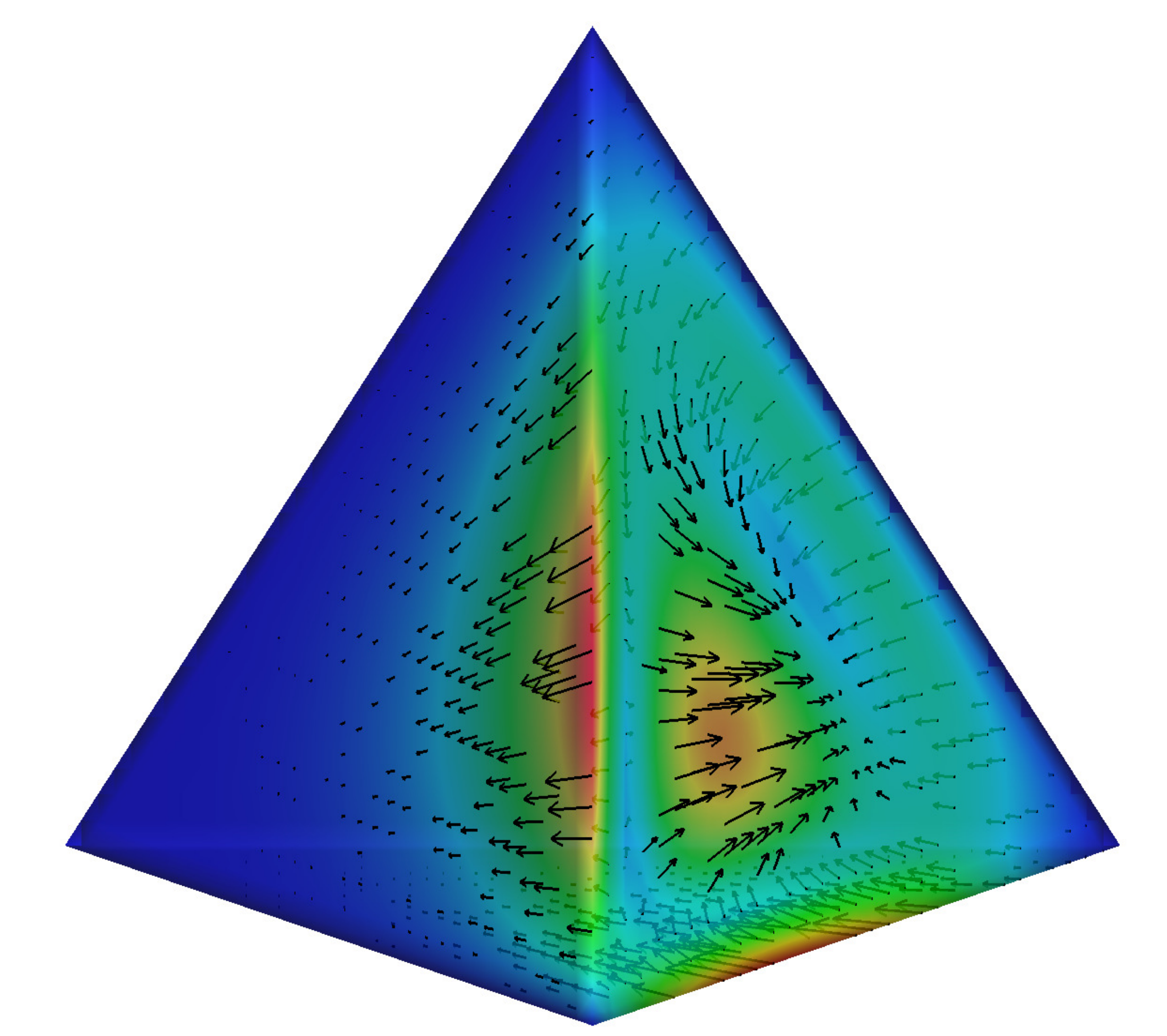}
\includegraphics[width=.32\textwidth]{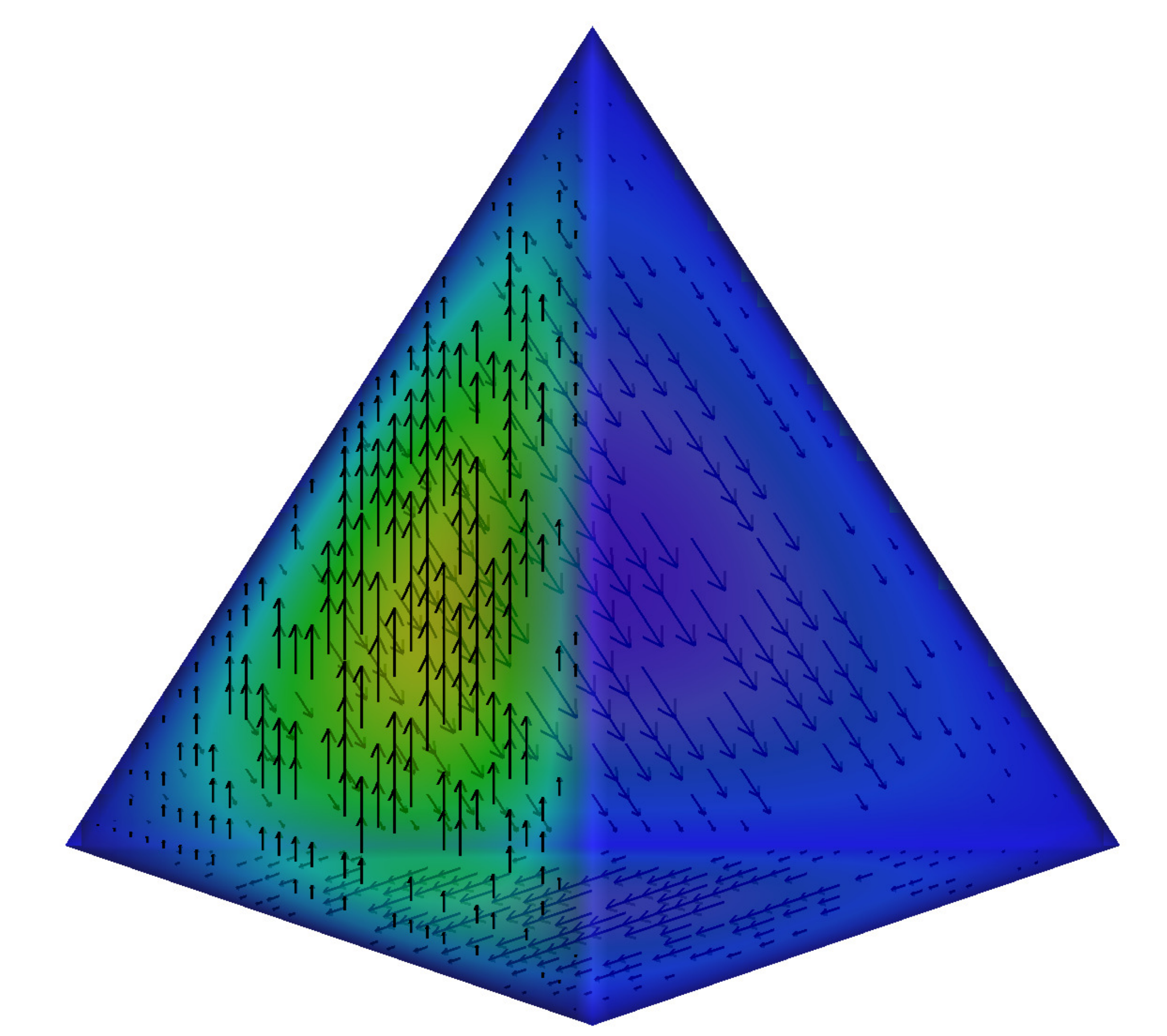}\\
\includegraphics[width=.32\textwidth]{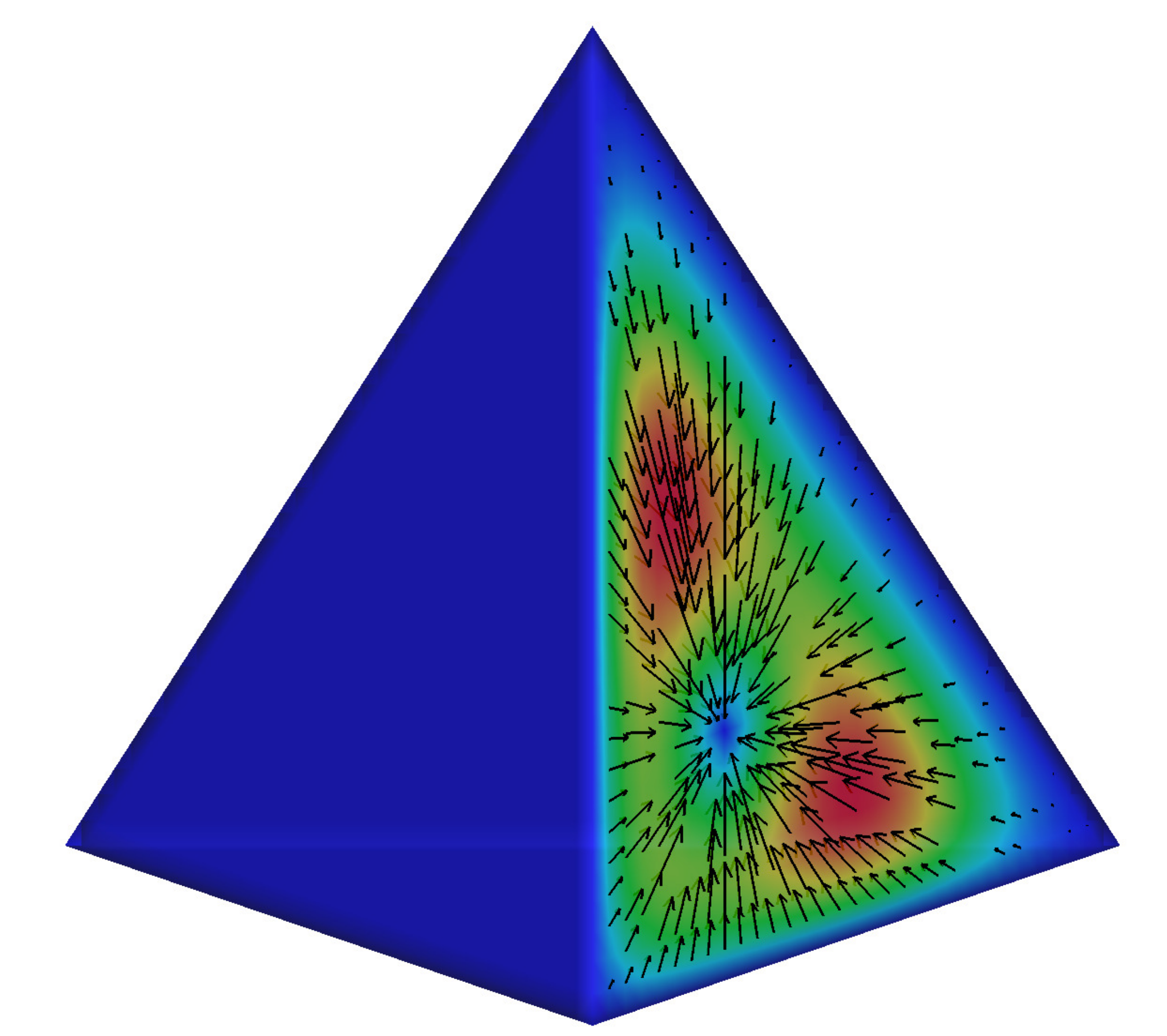}
\caption{Top:  left $\b\chi_{1}$, 
middle $\Curl\Xa{FT,4}{(0,2,1,1)}$, right: 
$\Curl\Xb{T,4}{1,(2,1,1,1)}$.
Bottom: $\Xc{4}{(0,2,1,1)}$.
}
\label{fig:div}
\end{figure}

%

Although the basis are presented for the case where the local order is uniform.
We may vary the polynomial degree associated to each edge, face and cell
without compromising the local exact sequence property.

\newcommand{\xdownarrow}[1]{%
  {\left\downarrow\vbox to #1{}\right.\kern-\nulldelimiterspace}
}

\newpage

\subsection{Embedded Sequences and Dimension Hierarchy}
Our basis also has the property that 
polynomial exact sequences 
on triangular elements which, in turn, contain exact sequences on edges, 
are 
``embedded'' in our tetrahedral basis.
These properties are illustrated in Figure \ref{fig:emb}, 
where the vertical arrows indicate the appropriate trace operator onto the lower dimensional entity.
\begin{figure}[ht!]
 \caption{Embedded Sequence}
\begin{tabular}{l c c c c c l l}
\noalign{\smallskip}
\multicolumn{8}{c}{First Sequence}\\
\hline
Tetrahedron: &$\pol_{n+1}
$&
$\overset{\Grad}{\longrightarrow}$ &
$
\pol_{n}^3
\oplus\b x
\times{\widetilde\pol}_{n}^3
$
&
$\overset{\Curl}{\longrightarrow}$ &
$
\pol_{n}^3
\oplus \b x\,\widetilde\pol_n
$&
$\overset{\Div}{\longrightarrow}$ &
$\pol_{n}
$\\
&$\xdownarrow{0.5cm}{\mathrm{tr}}$ & &
$\xdownarrow{0.5cm}{\mathrm{tr}}$ 
& &
$\xdownarrow{0.5cm}{\mathrm{tr}}$ 
\\
Triangle: &$\pol_{n+1}
$&
$\overset{\Grad}{\longrightarrow}$ &
$
\pol_{n}^2
\oplus \b x\times \,\widetilde\pol_n
$
&
$\overset{\Curl}{\longrightarrow}$ &
$
\pol_{n}
$&
\\
&$\xdownarrow{0.5cm}{\mathrm{tr}}$ & &
$\xdownarrow{0.5cm}{\mathrm{tr}}$ \\
Segment: &$\pol_{n+1}
$&
$\overset{\Grad}{\longrightarrow}$ &
$
\pol_{n}
$
\\
\hline
\noalign{\smallskip}
\noalign{\smallskip}
\multicolumn{8}{c}{Second Sequence}\\
\hline
Tetrahedron: &$\pol_{n+2}
$&
$\overset{\Grad}{\longrightarrow}$ &
$
\pol_{n+1}^3
$
&
$\overset{\Curl}{\longrightarrow}$ &
$
\pol_{n}^3
$&
$\overset{\Div}{\longrightarrow}$ &
$\pol_{n-1}$
\\
&$\xdownarrow{0.5cm}{\mathrm{tr}}$ & &
$\xdownarrow{0.5cm}{\mathrm{tr}}$ 
& &
$\xdownarrow{0.5cm}{\mathrm{tr}}$ 
\\
Triangle: &$\pol_{n+2}
$&
$\overset{\Grad}{\longrightarrow}$ &
$
\pol_{n+1}^2
$
&
$\overset{\Curl}{\longrightarrow}$ &
$
\pol_{n}
$&
\\
&$\xdownarrow{0.5cm}{\mathrm{tr}}$ & &
$\xdownarrow{0.5cm}{\mathrm{tr}}$ \\
Segment: &$\pol_{n+2}
$&
$\overset{\Grad}{\longrightarrow}$ &
$
\pol_{n+1},
$
\\
\hline
\end{tabular}
 \label{fig:emb}
\end{figure}

We shall demonstrate explicitly 
show that the above {\it dimensional hierarchy} is satisfied at the level of basis functions for those listed in Theorem \ref{thm:main}.
This concept of dimensional hierarchy was introduced in 
\cite{Zaglmayr06,FuentesKeithDemkowiczNagaraj15}.

To this end, we introduce some further notation.
Consider a triangular face $F = \mathrm{conv}\{\b x_{i_1},\b x_{i_2},\b x_{i_3}\} \in \mathcal{F}(T)$ 
whose outward unit normal vector given as $\b n_F$. 
We denote the set $\mathrm{I}_F :=\{i_1,i_2,i_3\}$ where $i_1, i_2,i_3\in \{1,2,3,4\}$ are three distinct indices, and 
let $i_4$ be the index in $\{1,2,3,4\}\backslash \mathrm{I}_F$.
For any multi-index $\b\alpha=(\alpha_1,\alpha_2,\alpha_3,\alpha_4)\in \INThree{n}$, we denote 
$\b\alpha_F = (\alpha_{i_1},\alpha_{i_2},\alpha_{i_3})\in \INTwo{n}$ to be its restriction onto the face $F$.
We denote  the barycentric coordinates of a point 
$\b x\in F$  with respect to the triangle~$F$ as
$\b\lambda^F=(\lambda_1^F,\lambda_2^F,\lambda_3^F)$ where
\begin{align}
\label{barycentricSum2}
 \b{x}= \sum_{k=1}^{3} \lambda_k^F \b{x}_{i_k};\quad 1 = \sum_{k=1}^{3} \lambda_k^F,
\end{align}
and define the Bernstein polynomials of degree $n\in\ZZ_+$ associated with~$F$ as
\begin{equation}\label{bernsteinPolyDef2}
    B^{F,n}_\b{\alpha}(\b{x}) = {n\choose\b{\alpha}} \b{\lambda^F}(\b x)^\b{\alpha},
    \quad\b{\alpha}\in\INTwo{n},\;\;\;\;\b x\in F.
\end{equation}
The differential operator ``$\Grad$'' acting on scalar function on $F$ is understood to be the 
two-dimensional {\it surface gradient}, and 
``$\Curl$'' acting on a two-dimensional vectorial function on $F$
is understood to be the scalar {\it surface curl}. In other words, if $F$ lies in 
the $x-y$ plane, with $\b n_F = (0,0,1)^T$ as the 
outward unit normal vector, then
\[
 \Grad \phi = \left(\!\!\begin{tabular}{c}
           $\partial_x \phi$\\
           $\partial_y \phi$           
          \end{tabular}
          \!\!\right),\quad  
 \Curl \left(\!\!\begin{tabular}{c}
           $\phi_1$\\
           $\phi_2$           
          \end{tabular}
          \!\!\right) = 
           \partial_x \phi_2-\partial_y \phi_1.          
\]
Denoting the sets 
\begin{subequations}
 \label{2d-sets}
 \begin{align}
  {{\c I}_2^{n}}' :=&\; \INTwo{n}-\{\text{a single, arbitrary index in }\INTwo{n}\},\\
  \check{\c I}_2^{n} :=&\; \INTwo{n}-\{(n,0,0), (0,n,0), (0,0,n)\},
 \end{align}
\end{subequations}
It is known  \cite[Theorem 3.6]{AinsworthAndriamaroDavydov15},
that the following collection of functions forms a basis for $\pol_n(F)^2\oplus \b x\times \widetilde\pol_n(F)$:
\begin{align}
 \label{BB2d-first}
\{ \b\omega_{12}^F,\b\omega_{13}^F,\b\omega_{23}^F\}
\cup 
\{ \Grad{\BB{F,n+1}{\b\alpha}}:\;\b\alpha\in \check{\c I}_2^{n+1}\}
\cup 
\{ \Xa{F,n}{\b\alpha}:\;\b\alpha\in {\INTwo{n'}}\},
\end{align}
where the
{\it lowest-order edge elements}
$\b\omega_{ij}^F$ and the {\it bubble functions}  $\Xa{F,n}{\b\alpha}$
are given by
\begin{align}
 \label{Whitney-2D}
 \b\omega_{ij}^F :=  &\;\lambda_i^F\Grad\lambda_j^F-
 \lambda_j^F\Grad\lambda_i^F,\;\;\;1\le i< j\le 3,\\
 \label{xa}
 \Xa{F,n}{\b\alpha} := &\;(n+1)
 \BB{F,n}{\b\alpha}(\alpha_1\,\b\omega_{23}^F-\alpha_2\,\b\omega_{13}^F+\alpha_3\,\b\omega_{12}^F).
 \end{align}
 Equally well, the collection of functions 
\begin{align}
 \label{BB2d-second}
\{ \b\omega_{12}^F,\b\omega_{13}^F,\b\omega_{23}^F\}
\cup 
\{ \Grad{\BB{F,n+1}{\b\alpha}}:\;\b\alpha\in \check{\c I}_2^{n+1}\}
\cup 
\{ \Xa{F,n-1}{\b\alpha}:\;\b\alpha\in {\INTwo{n-1'}}\},
\end{align}
form a basis for the space $\pol_n(F)^2$.

We define the following trace operators, 
 \begin{alignat}{2}
 \label{traces-defn}
 \trh{F}(\phi) := &\;\phi\restrict{F} && \quad \forall \phi\in H^1(T),\\
 \trt{,F}(
\phi_k\Grad \lambda_{i_k}
 ) :=&\;
 \phi_k\restrict{F}\Grad 
 \lambda_{k}^F
 && \quad \forall \phi_k\in H^1(T),\quad \forall k \in\{ 1,2,3\},\\ 
 \;\;\trn{,F}(\phi_k\Grad \lambda_{i_k}
) := &\;
 \phi_k\restrict{F}\Grad 
 \lambda_{i_k}\cdot \b n_F
 && \quad \forall \phi_k\in H^1(T),\quad \forall k \in\{ 1,2,3\}.
\end{alignat}

It is easy to verify that 
\begin{itemize}
 \item 
 $\trh{F}(\lambda_{i_k}) ={\lambda_k^F}$ for $k \in\{1,2,3\}$,
\item 
 $\trt{,F}(\b\omega_{i_ki_\ell}) =
 \b\omega^F_{k\ell}$ for $k,\ell\in\{1,2,3\}$, 
\item 
 $\trn{,F}(\b\chi_{i_4}) =
 \frac{1}{2\,\mathrm{area}(F)}$,
\end{itemize}
whilst, for $\b\alpha \in \tilde{\c{I}}^{n}_{3}(F)$, we obtain
\begin{itemize}
 \item 
   $\trh{F}(\BB{n}{\b\alpha}) = \BB{F,n}{\b\alpha_F}$, 
   \item
   $\trt{,F}(\Grad\BB{n}{\b\alpha}) = \Grad\BB{F,n}{\b\alpha_F}$, 
  $\trt{,F}(\Xa{FT,n}{\b\alpha}) = \Xa{F,n}{\b\alpha_F}$,
\item 
  $\trn{,F}(\Curl\Xa{FT,n}{\b\alpha}) = \Curl\Xa{F,n}{\b\alpha_F}$ .
\end{itemize}

We can formally state the claim concerning the
dimension hierarchy from a tetrahedron to its triangular face.

\begin{theorem}
\label{thm:dim}
The traces of the three dimensional bases given in Theorem \ref{thm:main} define bases for 
finite element  spaces on triangles:
%
\begin{align*}
  \pol_n(F) &\;= \trh{F}\left(
  \oplus_{k\in\mathrm{I}_F}\{\lambda_k\}\oplus_{E\in \mathcal{E}(F)}\SSS_n^E
 \oplus\SSS_n^F
 \right)
 \\
\pol_{n}(F)^2
\oplus\b x
\times{\widetilde\pol}_{n}(F)
 &\;= \trt{,F}\left(
 \oplus_{k,\ell\in\mathrm{I}_F}\{\b\omega_{k\ell}\}\oplus_{E\in \mathcal{E}(F)}\Grad\SSS_{n+1}^E
 \oplus \Grad\SSS_{n+1}^F\oplus
\EEE_n^F
\right)\\
\pol_{n}(F)^2
 &\;= \trt{,F}\left(
 \oplus_{k,\ell\in\mathrm{I}_F}\{\b\omega_{k\ell}\}\oplus_{E\in \mathcal{E}(F)}\Grad\SSS_{n+1}^E
 \oplus \Grad\SSS_{n+1}^F\oplus
\EEE_{n-1}^F
\right)\\
 \pol_{n}(F) &\;= \trn{,F}\left(
\{\b\chi_{i_4}\}\oplus
\Curl\EEE_{n}^F
 \right)
\end{align*}
Moreover, a similar dimension hierarchy holds for the restriction of basis functions on triangles to its edges.
\end{theorem}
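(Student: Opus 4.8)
The plan is to reduce each of the four displayed identities to an already-established two-dimensional basis result, using the trace formulas collected immediately before the theorem together with the observation that the index restriction $\b\alpha\mapsto\b\alpha_F$ is a bijection from the three-dimensional index sets attached to $F$ (and to its edges and vertices) onto the corresponding two-dimensional index sets on $F$. The mechanism is that for $\b\alpha\in\tilde{\c I}_3^n(F)$ one has $\lambda_{i_4}\equiv 0$ on $F$, so $\b\lambda^{\b\alpha}$ restricts to $(\b\lambda^F)^{\b\alpha_F}$, whereas every basis function attached to an entity not contained in $F$ carries either a positive power of $\lambda_{i_4}$ or (for the tangential and normal traces) a component along $\Grad\lambda_{i_4}$, which is parallel to $\b n_F$; such functions have vanishing trace, which is precisely why the subsets listed in the theorem are the only survivors.

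First I would dispatch the $H^1$ identity and the two $H(\mathrm{curl})$ identities, where the argument is purely combinatorial once the trace formulas are in hand. Applying $\trh{F}$ termwise gives $\trh{F}(\lambda_k)=\lambda_k^F$ and $\trh{F}(\BB{n}{\b\alpha})=\BB{F,n}{\b\alpha_F}$; as $\b\alpha$ ranges over $\bigcup_{E\in\mathcal E(F)}\mathring{\c I}_3^n(E)$ together with $\mathring{\c I}_3^n(F)$, the restricted index $\b\alpha_F$ ranges exactly once over the non-vertex two-dimensional indices $\check{\c I}_2^n$, so the trace set is exactly the two-dimensional $H^1$ basis of $\pol_n(F)$ furnished by the first decomposition of Theorem \ref{thm:main} on the triangle $F$ (vertex functions $\lambda_k^F$ plus degree-$n$ edge and face bubbles). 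For the tangential trace one uses $\trt{,F}(\b\omega_{i_ki_\ell})=\b\omega^F_{k\ell}$, $\trt{,F}(\Grad\BB{n+1}{\b\alpha})=\Grad\BB{F,n+1}{\b\alpha_F}$ and $\trt{,F}(\Xa{FT,n}{\b\alpha})=\Xa{F,n}{\b\alpha_F}$: the gradient indices assemble into $\check{\c I}_2^{n+1}$ and the face-bubble indices into ${{\c I}_2^n}'$, so the trace set is verbatim the collection \eqref{BB2d-first}, a basis of $\pol_n(F)^2\oplus\b x\times\widetilde\pol_n(F)$ by \cite[Theorem 3.6]{AinsworthAndriamaroDavydov15}. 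Replacing $\EEE_n^F$ by $\EEE_{n-1}^F$ reproduces \eqref{BB2d-second} and hence the second-sequence identity in the same manner.

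The normal-trace identity needs one extra ingredient, the exactness of the two-dimensional sequence. Here $\trn{,F}(\b\chi_{i_4})=\frac{1}{2\,\mathrm{area}(F)}$ is a nonzero constant and $\trn{,F}(\Curl\Xa{FT,n}{\b\alpha})=\Curl\Xa{F,n}{\b\alpha_F}$, so the trace set is $\{\frac{1}{2\,\mathrm{area}(F)}\}\cup\{\Curl\Xa{F,n}{\b\beta}:\b\beta\in{{\c I}_2^n}'\}$. I would identify this as the image under the scalar surface curl of the basis \eqref{BB2d-first}: the gradient functions lie in the kernel of $\Curl$, the three lowest-order functions satisfy $\Curl\b\omega^F_{12}=\Curl\b\omega^F_{23}=\Curl\b\omega^F_{13}=2\,\Grad\lambda_1^F\times\Grad\lambda_2^F$ (a common nonzero constant, using $\sum_k\Grad\lambda_k^F=0$), and the $\Curl\Xa{F,n}{\b\beta}$ supply the rest. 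Since $\Curl$ maps $\pol_n(F)^2\oplus\b x\times\widetilde\pol_n(F)$ onto $\pol_n(F)$ and the surviving images number $1+(\binom{n+2}{2}-1)=\dim\pol_n(F)$, they form a basis.

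The only genuinely delicate point is the bookkeeping of the arbitrarily removed indices: both $\tilde{\c I}_3^n(F)'$ and ${{\c I}_2^n}'$ drop one arbitrary index, and one must fix these choices so that the bijection $\b\alpha\mapsto\b\alpha_F$ carries the removed three-dimensional index to the removed two-dimensional one; with that compatible choice all counts and spans match exactly. I expect this index-matching, rather than any analytic estimate, to be the main (if modest) obstacle, together with the minor verification that the non-listed basis functions really do trace to zero. Finally, the triangle-to-edge hierarchy claimed in the last sentence follows by the identical argument one dimension lower: restricting $\b\alpha_F\mapsto\b\alpha_E$ for an edge $E\subset F$ and invoking the one-dimensional Bernstein basis and exact sequence reduces each triangular identity to its edge counterpart.
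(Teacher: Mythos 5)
Your proposal is correct and follows essentially the same route the paper intends: the theorem is reduced, via the termwise trace identities listed just before its statement and the bijection $\b\alpha\mapsto\b\alpha_F$ between the relevant index sets, to the known two-dimensional basis results \eqref{BB2d-first}--\eqref{BB2d-second} of \cite[Theorem 3.6]{AinsworthAndriamaroDavydov15}, with the normal-trace case handled by the surjectivity of the surface curl and a dimension count. Your remark about choosing the removed index in $\tilde{\c I}_3^n(F)'$ compatibly with that in ${{\c I}_2^n}'$ is a sensible piece of bookkeeping that the paper leaves implicit.
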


\subsection{Vectorial function spaces}
To end this section, we collect notation for function spaces 
on the tetrahedral~$T$ that will be in the next two sections:
\begin{align*}
\b H(\mathrm{curl}; T):=&\;\{\b v\in L^2(T)^3:\;\; \Curl \b v\in L^2(T)^3\},\\
\b H_0(\mathrm{curl}; T):=&\;\{\b v\in \b H(\mathrm{curl};T):\;\; 
\trt{,F} \b v = \b 0,\;\;
\forall F\in \mathcal{F}(T)
\},\\
\b H(\mathrm{div}; T):=&\;\{\b v\in L^2(T)^3:\;\; \Div \b v\in 
L^2(T)\},\\
\b H_0(\mathrm{div}; T):=&\;\{\b v\in \b H(\mathrm{div};T):\;\; 
\trn{,F} \b v = 0,\;\;
\forall F\in \mathcal{F}(T)
\},\\
\b H(\mathrm{div}^0; T):=&\;\{\b v\in \b H(\mathrm{div};T):\;\; 
\Div \b v = 0\},\\
\b H_0(\mathrm{div}^0; T):=&\;\{\b v\in \b H_0(\mathrm{div};T):\;\; 
\Div \b v = 0\},\\
L^2_0(T):=&\;\{v\in L^2(T):\;\;
\int_T\,v = 0
\}.
\end{align*}


\section{Bernstein-B\'ezier $\b H(\mathrm{curl})$ finite elements on a tetrahedron}
\label{sec:curl}
Here we construct Bernstein-B\'ezier basis for both first- and second-kind \NED 
$\b H(\mathrm{curl})$-finite element on a tetrahedron~$T$:
$
\mathbb{P}_n^3\oplus
\b x\times \widetilde{\mathbb{P}}_n^3
$
and 
$
\mathbb{P}_n^3$.

Following \cite{AinsworthAndriamaroDavydov15,Zaglmayr06},
we seek a basis that
gives a clear separation between the \emph{gradients} of
the polynomial basis for the $H^1$ space, and the non-gradients.
In two dimensions, see \eqref{BB2d-first} and \eqref{BB2d-second}, the non-gradients 
consist of 2D lowest-order edge elements, and 
$\b H(\mathrm{curl})$ face bubbles.
In three dimensions, the non-gradients will 
consist of 3D lowest-order edge elements, the natural 3D 
liftings of the 2D $\b H(\mathrm{curl})$ face bubbles, and a 
set of $\b H(\mathrm{curl})$ cell bubbles, as we see in the next.

\subsection{Lowest-order edge elements}
The following lowest-order edge elements are well-known:
\begin{align}
 \label{ND3D-lowestorder}
 \b\omega_{ij}:=\lambda_i\Grad\lambda_j-
 \lambda_j\Grad\lambda_i,\;\;\;i,j\in\{1,2,3,4\}.
\end{align}
The set
$
 \{
 \b\omega_{ij}:1\le i< j\le 4
 \}
$
form a basis for the space $\NDOne{0}$.

\subsection{Non-gradient $\b H(\mathrm{curl})$ face bubbles}
The non-gradient $\b H(\mathrm{curl})$ face bubbles are obtained by simply lifting the 2D 
$\b H(\mathrm{curl})$ face bubbles on the triangular boundary into the tetrahedron.

Let $F=\mathrm{conv}(\b x_{i_1},\b x_{i_2},\b x_{i_3})$ 
be a face of the tetrahedral~$T$, and let $i_4$ be the index such that 
where $(i_1,i_2,i_3,i_4)$
is  a permutation of $(1,2,3,4)$.
For $\b\alpha\in \tilde{\c I}_{3}^{n}(F)$, 
we define the function
\begin{align}
\label{BB-facebubble}
 \Xa{FT,n}{\b\alpha}:=
(n+1) \BB{n}{\b\alpha}(\alpha_{i_1}\,\b\omega_{i_2i_3}+
\alpha_{i_2}\,\b\omega_{i_3i_1}+
\alpha_{i_3}\,\b\omega_{i_1i_2}).
\end{align}
The following result characterizes
above functions.
\begin{lemma}
\label{lemma:face-curl-bubble}
For any face $F_\ell\in \mathcal{F}(T)$, we have
\begin{align*}
 \trt{,F_\ell}\,\Xa{FT,n}{\b\alpha} =
\left\{
\begin{tabular}{l l}
 $\Xa{F_{\ell},n}{\b\alpha_F} 
 $& if $F_{\ell}=F$,\\
 $0$ & if $F_\ell\not=F$,
\end{tabular}
\right.
\quad \forall \b\alpha\in \tilde{\c{I}}^{n}_{3}(F).
\end{align*}
Moreover,
the set $
 \{\Xa{FT,n}{\b\alpha}:\;
 \b\alpha\in \tilde{\c{I}}^{n}_{3}(F)'
 \}$ 
 consists of ${n+2\choose2}-1$ linearly independent functions
 belonging to $\NDOne{n}$. 
\end{lemma}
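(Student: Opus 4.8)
The plan is to prove the three assertions of the lemma in turn: first the trace identity, then membership in $\NDOne{n}$, and finally linear independence of the reduced collection indexed by $\tilde{\c{I}}^{n}_{3}(F)'$.

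First I would establish the trace formula. For the case $F_\ell = F$, the key observation is that the definition \eqref{BB-facebubble} is deliberately modeled on the $2$D bubble \eqref{xa}, so that applying $\trt{,F}$ term by term should reproduce $\Xa{F,n}{\b\alpha_F}$. Concretely, I would use the trace rules already verified in the excerpt, namely $\trt{,F}(\b\omega_{i_k i_\ell}) = \b\omega^F_{k\ell}$ and $\trh{F}(\BB{n}{\b\alpha}) = \BB{F,n}{\b\alpha_F}$ for $\b\alpha \in \tilde{\c{I}}^{n}_{3}(F)$, together with the fact that $\trt{,F}$ distributes over the product of a scalar $H^1$ factor with a $\Grad\lambda$-type vector (this is exactly the form in which $\trt{,F}$ is defined in \eqref{traces-defn}). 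Matching the coefficients $\alpha_{i_1},\alpha_{i_2},\alpha_{i_3}$ against $\alpha_1,\alpha_2,\alpha_3$ in $\b\alpha_F$ and the Whitney forms $\b\omega^F_{23},\b\omega^F_{13},\b\omega^F_{12}$ gives the claimed $\Xa{F_\ell,n}{\b\alpha_F}$, up to checking the sign convention in \eqref{xa} matches the cyclic ordering in \eqref{BB-facebubble}. For the case $F_\ell \neq F$, the crucial point is that for $\b\alpha \in \tilde{\c{I}}^{n}_{3}(F)$ one has $\alpha_i = 0$ for every index $i \notin \mathrm{I}_F$; since $F_\ell$ and $F$ differ, at least one vertex of $F_\ell$ lies outside $F$, so $\BB{n}{\b\alpha}$ vanishes on $F_\ell$ (its barycentric factor for that vertex carries a positive power evaluated at zero), whence the whole tangential trace vanishes.

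Next, membership in $\NDOne{n} = \pol_n^3 \oplus \b x \times \widetilde\pol_n^3$. Here I would argue that each $\b\omega_{i_ki_\ell}$ is the lowest-order edge element lying in $\NDOne{0} \subset \NDOne{n}$, and that multiplying such a Whitney form by the Bernstein polynomial $\BB{n}{\b\alpha}$ of degree $n$ keeps us inside $\NDOne{n}$. The cleanest route is to recall the standard characterization that $\b v \in \NDOne{n}$ if and only if $\b v \in \pol_n^3$ plus the homogeneous part satisfies $\b x \cdot (\text{top-degree part}) = 0$ in the appropriate sense; since $\b\omega_{i_ki_\ell}$ already satisfies the Nédélec structural condition and $\BB{n}{\b\alpha}$ contributes only a degree shift, the product inherits membership. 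I expect this to be the most delicate of the three parts, because one must verify that the specific \emph{combination} in \eqref{BB-facebubble}, not merely each summand, respects the homogeneous constraint defining $\NDOne{n}$; the coefficients $\alpha_{i_j}$ are chosen precisely so this holds, mirroring the $2$D verification underlying \eqref{BB2d-first}.

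Finally, linear independence and the count. The dimension statement is $\#\,\tilde{\c{I}}^{n}_{3}(F)' = {n+2\choose 2} - 1$, which follows since $\tilde{\c{I}}^{n}_{3}(F)$ is in bijection with $\INTwo{n}$ (multi-indices on a triangle summing to $n$), of cardinality ${n+2 \choose 2}$, and we delete one index in passing to the primed set. For independence I would invoke the trace identity together with the $2$D result quoted from \cite[Theorem 3.6]{AinsworthAndriamaroDavydov15}: suppose a linear combination $\sum_{\b\alpha} c_{\b\alpha}\,\Xa{FT,n}{\b\alpha}$ vanishes; applying $\trt{,F}$ and using the case $F_\ell = F$ above yields $\sum_{\b\alpha} c_{\b\alpha}\,\Xa{F,n}{\b\alpha_F} = 0$ on $F$. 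The $2$D bubbles $\{\Xa{F,n}{\b\alpha_F} : \b\alpha_F \in \INTwo{n}\}$ are, by the cited theorem, part of a basis and hence linearly independent; restricting to $\b\alpha \in \tilde{\c{I}}^{n}_{3}(F)'$ (one index removed) leaves an independent subfamily, forcing all $c_{\b\alpha} = 0$. Thus the trace map is injective on our span, and since the $3$D functions are nonzero liftings, they too are linearly independent, giving exactly ${n+2\choose 2}-1$ functions in $\NDOne{n}$.
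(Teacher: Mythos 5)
Your overall strategy coincides with the paper's: membership in $\NDOne{n}$ follows from $\b\omega_{ij}\in\NDOne{0}$, the trace identity is a term-by-term computation, and linear independence is pulled back from the two-dimensional result through the tangential trace. However, your justification of the vanishing trace on a face $F_\ell\neq F$ contains a step that fails. You argue that $\BB{n}{\b\alpha}$ itself vanishes on $F_\ell$ because the barycentric factor for the vertex of $F_\ell$ lying outside $F$ carries a positive power. For $\b\alpha\in\tilde{\c{I}}^{n}_{3}(F)$ the only vertex of $F_\ell$ outside $F$ is $\b x_{i_4}$, and by definition $\alpha_{i_4}=0$, so that factor carries power \emph{zero}; moreover $\lambda_{i_4}$ does not vanish on $F_\ell$. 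The conclusion itself is false for indices on the boundary of $F$: take $F=F_4$, $F_\ell=F_1$ and $\b\alpha=n\b e_2$, so that $\BB{n}{\b\alpha}=\lambda_2^n$, which is nonzero on $F_1$. Since $\tilde{\c{I}}^{n}_{3}(F)$ contains all indices supported on the \emph{closed} face, including its edges and vertices, your argument does not cover the whole index set appearing in the statement.

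The correct mechanism combines two observations. Writing $F=F_{i_4}$ and taking $F_\ell$ with $\ell\in\mathrm{I}_F$, the tangential trace on $F_\ell$ of every Whitney form $\b\omega_{jk}$ with $\ell\in\{j,k\}$ vanishes, because $\lambda_\ell=0$ on $F_\ell$ and $\Grad\lambda_\ell$ is normal to $F_\ell$. The only surviving Whitney form in \eqref{BB-facebubble} is the one avoiding the index $\ell$, and its coefficient is $\alpha_\ell\BB{n}{\b\alpha}$, which does vanish on $F_\ell$ for every $\b\alpha\in\tilde{\c{I}}^{n}_{3}(F)$: either $\alpha_\ell=0$, or $\BB{n}{\b\alpha}$ contains the factor $\lambda_\ell^{\alpha_\ell}$ with $\alpha_\ell>0$. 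Two smaller points. First, the N\'ed\'elec membership is not delicate: each summand $\BB{n}{\b\alpha}\,\b\omega_{i_ji_k}$ lies in $\NDOne{n}$ because $\pol_n\cdot\NDOne{0}\subset\NDOne{n}$, and $\NDOne{n}$ is a linear space, so no property of the particular combination of summands is needed. Second, in the independence argument the full family $\{\Xa{F,n}{\b\beta}:\b\beta\in\INTwo{n}\}$ is \emph{not} linearly independent (that is precisely why one index is deleted in \eqref{BB2d-first}); your restriction to the primed set is what makes the argument go through, so the preceding sentence claiming independence of the full family should be dropped.
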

\begin{proof}
Since $\b\omega_{ij} \in \NDOne{0}$, we get 
$\Xa{FT,n}{\b\alpha}\in \NDOne{n}$.
The trace property follows from straightforward calculation comparing \eqref{xa} and \eqref{BB-facebubble}. The linear independence of functions is 
a direct consequence of the linear independence of their tangential traces on the face $F$. 
\end{proof}

\subsection{Non-gradient $\b H(\mathrm{curl})$ cell bubbles}
We use the following vector field,
for $\ell\in\{1,2,3,4\}$, to obtain the $\b H(\mathrm{curl})$ cell bubbles:
\begin{align}
\label{BB-cellbubble}
 \Xb{n}{\ell,\b\alpha}
  = (n+1)\BB{n}{\b{\alpha}-\b{e}_\ell}\Grad\lambda_\ell
  - \frac{\alpha_\ell}{n+1}\Grad\BB{n+1}{\b{\alpha}},\;\;
 \b\alpha\in 
 \mathring{\c{I}}_3^{n+1}.
\end{align}

We have the following result 
concerning the above functions,
whose proof is quite technical, and is postponed to the Appendix.
\begin{lemma}
\label{lemma:cell-curl-bubble}
Let $\Xb{n+1}{\ell,\b{\alpha}}$, $\b{\alpha}\in\mathring{\c{I}}^{n+2}_3$, 
denote the vector fields defined in~\eqref{BB-cellbubble}. Then
$\Xb{n+1}{\ell,\b{\alpha}}\in \b{H}_0(\mathrm{curl}; T)\cap\NDOne{n}$. 
Moreover, the set
\begin{align}
\label{BBC-bubble}
       \bigoplus_{\ell=1}^2\left\{
      \Xb{n+1}{\ell,\b{\alpha}}: \b{\alpha}\in
       \mathring{\c{I}}_3^{n+2}
    \right\}
    \bigoplus\left\{    
      \Xb{n+1}{3,\b{\alpha}}: \b{\alpha}\in  \mathring{\c{I}}_3^{n+2}\;,\alpha_3=1
    \right\} 
\end{align}
consists of $2{n+1\choose{3}} +{n\choose{2}}$ linearly independent 
vector fields in $\b{H}_0(\mathrm{curl}; T)\cap\NDOne{n}$, 
whose curl set 
\begin{align}
 \label{BBC-bubble-curl}
       \bigoplus_{\ell=1}^2\left\{
      \Curl\Xb{n+1}{\ell,\b{\alpha}}: \b{\alpha}\in
       \mathring{\c{I}}_3^{n+2}
    \right\}
    \bigoplus\left\{    
      \Curl\Xb{n+1}{3,\b{\alpha}}: \b{\alpha}\in  \mathring{\c{I}}_3^{n+2}\;,\alpha_3=1
    \right\} 
\end{align}
form a basis for the divergence-free bubble space
$\b H_0(\mathrm{div}^0,T)\cap \mathbb{P}_{n}^3$.
\end{lemma}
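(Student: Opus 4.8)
The plan is to prove the three assertions of Lemma \ref{lemma:cell-curl-bubble} in order: first the membership $\Xb{n+1}{\ell,\b\alpha}\in \b H_0(\mathrm{curl};T)\cap \NDOne{n}$, then the linear independence and cardinality count, and finally that the curls form a basis for the divergence-free bubble space. For the membership claim I would argue that each summand of \eqref{BB-cellbubble} lies in $\NDOne{n}$: the second term is a gradient of a degree-$(n+1)$ Bernstein polynomial, hence a polynomial vector field of degree $n$, and the first term is a degree-$n$ Bernstein polynomial times the constant vector $\Grad\lambda_\ell$, so both sit in $\pol_n^3\subset \NDOne{n}$. The boundary condition $\b v\in \b H_0(\mathrm{curl};T)$ requires $\trt{,F}\Xb{n+1}{\ell,\b\alpha}=\b 0$ on every face $F$; since $\b\alpha\in \mathring{\c I}_3^{n+2}$ has all entries positive, the Bernstein factors $\BB{n+1}{\b\alpha-\b e_\ell}$ and $\BB{n+2}{\b\alpha}$ vanish to appropriate order on each face, so I would verify that both the tangential trace of $\BB{n+1}{\b\alpha-\b e_\ell}\Grad\lambda_\ell$ and of $\Grad\BB{n+2}{\b\alpha}$ vanish, the latter because a bubble vanishing on $F$ has vanishing surface gradient there.

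For the independence and the count $2\binom{n+1}{3}+\binom{n}{2}$, I would first note that the index set $\mathring{\c I}_3^{n+2}$ has cardinality $\binom{n+1}{3}$, so the two families $\ell=1,2$ contribute $2\binom{n+1}{3}$, and the constraint $\alpha_3=1$ in the $\ell=3$ family reduces its count to $\binom{n}{2}$, matching the claim. For linear independence, the clean route is to pass to the curls: a relation among the $\Xb{n+1}{\ell,\b\alpha}$ would, after applying $\Curl$, give a relation among their curls (modulo the kernel of curl, which on the bubble space $\b H_0(\mathrm{curl};T)\cap\NDOne{n}$ consists of gradients of $H^1$-bubbles $\Grad \SSS_{n+2}^T$), and I would disentangle these using the dimension count of the full sequence. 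Alternatively, and more directly, I would establish independence of the curls first and deduce independence of the $\Xb{}{}$ themselves together with the fact that none of them is a gradient, using that $\Curl\Grad\BB{n+2}{\b\alpha}=\b 0$ and the explicit formula $\Curl\Xb{n+1}{\ell,\b\alpha}=(n+2)\Grad\BB{n+1}{\b\alpha-\b e_\ell}\times\Grad\lambda_\ell$ that follows from \eqref{BB-cellbubble}.

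For the final claim that the curls \eqref{BBC-bubble-curl} form a basis for $\b H_0(\mathrm{div}^0;T)\cap\pol_n^3$, the strategy is to match dimensions and prove spanning (independence then follows, or is proved directly). I would compute the dimension of the target space: the divergence-free bubbles are the image under $\Curl$ of $\b H_0(\mathrm{curl};T)\cap\NDOne{n}$ modulo the gradient bubbles $\Grad\SSS_{n+2}^T$, so its dimension equals $\dim\big(\b H_0(\mathrm{curl};T)\cap\NDOne{n}\big)-\dim\Grad\SSS_{n+2}^T$. Using $\dim\SSS_{n+2}^T=\binom{n+1}{3}$ and the known dimension of the interior $\b H_0(\mathrm{curl})$ bubble space for the first-kind \NED element, this difference should equal $2\binom{n+1}{3}+\binom{n}{2}$, matching the cardinality of \eqref{BBC-bubble-curl}. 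Granting the count, spanning is what remains, and I would obtain it from the exactness of the sequence: on the bubble complex $\Grad\SSS_{n+2}^T\to \b H_0(\mathrm{curl};T)\cap\NDOne{n}\to \b H_0(\mathrm{div}^0;T)\cap\pol_n^3\to 0$ the last map $\Curl$ is surjective onto the divergence-free bubbles, so the curls of any spanning set for the non-gradient part span the image.

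The main obstacle I expect is the linear independence of the cell-bubble family \eqref{BBC-bubble}, particularly the asymmetric treatment of the three components (two full families for $\ell=1,2$ but only the slice $\alpha_3=1$ for $\ell=3$). The delicacy is that the three vector fields $\Xb{n+1}{1,\b\alpha},\Xb{n+1}{2,\b\alpha},\Xb{n+1}{3,\b\alpha}$ for a fixed $\b\alpha$ are \emph{not} independent as $\ell$ ranges over $\{1,2,3,4\}$, because $\sum_\ell \Grad\lambda_\ell=\b 0$ and the Bernstein shift identities relate them; one must show precisely that dropping the $\ell=4$ family entirely and restricting $\ell=3$ to $\alpha_3=1$ yields exactly a basis of the quotient by gradients. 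This is why the authors defer the detailed verification to the Appendix, and I would likewise isolate the key algebraic identity—namely the relation expressing $\sum_\ell \alpha_\ell\,\Xb{n+1}{\ell,\b\alpha}$ (or a similar combination) in terms of a pure gradient—and use it to justify both the removal of redundant indices and the coincidence of the resulting count with the target dimension.
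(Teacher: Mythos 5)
There are two genuine gaps.

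\textbf{First, the membership argument for $\NDOne{n}$ is wrong as stated.} The field in the lemma is $\Xb{n+1}{\ell,\b\alpha}=(n+2)\BB{n+1}{\b\alpha-\b e_\ell}\Grad\lambda_\ell-\frac{\alpha_\ell}{n+2}\Grad\BB{n+2}{\b\alpha}$ with $\b\alpha\in\mathring{\c I}_3^{n+2}$; both summands are polynomial vector fields of degree $n+1$, not $n$, so your claim that they ``sit in $\pol_n^3\subset\NDOne{n}$'' does not hold. Since $\NDOne{n}$ is a \emph{proper} subspace of $\pol_{n+1}^3$, membership is not automatic from degree counting and is in fact the delicate point: the paper invokes N\'ed\'elec's characterization (it suffices that $\b x\cdot\Xb{n+1}{\ell,\b\alpha}\in\pol_{n+1}$) and verifies it by an explicit computation showing $(\b x-\b x_\ell)\cdot\Xb{n+1}{\ell,\b\alpha}=-(n+2-\alpha_\ell)\BB{n+1}{\b\alpha-\b e_\ell}$. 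Your proposal contains no substitute for this step. (Your trace argument and the cardinality count $2\binom{n+1}{3}+\binom{n}{2}$ are fine.)

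\textbf{Second, the linear independence of the curls is announced but not proved.} You correctly identify this as the crux and correctly reduce the problem to the curls via $\Curl\Xb{n+1}{\ell,\b\alpha}=(n+2)\Grad\BB{n+1}{\b\alpha-\b e_\ell}\times\Grad\lambda_\ell$, but then only gesture at ``disentangling using the dimension count'' or ``isolating the key algebraic identity.'' Dimension counting alone cannot close the argument: the target space $\b H_0(\mathrm{div}^0;T)\cap\pol_n^3$ has exactly $2\binom{n+1}{3}+\binom{n}{2}$ dimensions, so one must still prove either independence or spanning of the proposed set, and your surjectivity argument via exactness is circular — exactness gives that $\Curl$ maps the full curl-bubble space onto the divergence-free bubbles, but to conclude that the \emph{specific} truncated family (all of $\ell=1,2$, only the slice $\alpha_3=1$ of $\ell=3$, none of $\ell=4$) spans, you would need to already know that this family together with the gradient bubbles spans the curl-bubble space, which is precisely what is at issue. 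The paper's proof here is the substantive content of the appendix: after eliminating $\Grad\lambda_4$ and expressing each curl in the frame $\b t_{12},\b t_{23},\b t_{31}$, a vanishing linear combination forces three scalar Bernstein identities, and the coefficients are shown to vanish by induction on $\alpha_4$, successively restricting to the edge $E_{34}$ and the face $F_4$ after dividing out a power of $\lambda_4$. Some argument of this kind (or an equivalent degrees-of-freedom/duality argument) must be supplied; without it the lemma is not proved.
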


\subsection{Bernstein-B\'ezier $\b H(\mathrm{curl})$ basis}
Combing the results in Lemma \ref{lemma:face-curl-bubble} and Lemma \ref{lemma:cell-curl-bubble},
and adding back the lowest-order edge elements along with the gradient fields, 
we obtain the Bernstein-B\'ezier $\b H(\mathrm{curl})$ basis for both $\NDOne{n} (n\ge 0)$, and $\NDTwo{n} (n\ge 1)$
in the following theorem.

\begin{theorem}
\label{thm:curl}
The set
\begin{subequations}
\label{BB-curl-basis}
\begin{alignat}{2}
 \label{BB-curl-basis-1}
&
\bigoplus
\{\b\omega_{ij}:\;1\le i<j\le 4\}&&\;\text{(lowest order)}\\
&
\;
\bigoplus
\{\Grad \BB{n+1}{\b\alpha}:\;\b\alpha\in \check{\c{I}}_{3}^{n+1}\}&&\;
\text{(gradient fields)}\nonumber\\
&
\;\;
\bigoplus_{F\in \mathcal{F}(T)}
\{
 \Xa{FT,n}{\b\alpha}:\;
 \b\alpha\in \tilde{\c{I}}^{n}_{3}(F)'
 \}&&\;\text{(face bubbles)}
 \nonumber\\
 &
\;\;\;
\bigoplus_{\ell=1}^2\left\{
      \Xb{n+1}{\ell,\b{\alpha}}: \b{\alpha}\in
       \mathring{\c{I}}_3^{n+2}
    \right\}
    \bigoplus\left\{    
      \Xb{n+1}{3,\b{\alpha}}: \b{\alpha}\in  \mathring{\c{I}}_3^{n+2}\;,\alpha_3=1
    \right\} 
    &&\;\text{(cell bubbles)}
    \nonumber
\end{alignat}
forms a basis for $\NDOne{n}$,
and the set
\begin{alignat}{2}
\label{BB-curl-basis-2}
&
\bigoplus
\{\b\omega_{ij}:\;1\le i<j\le 4\}&&\;\text{(lowest order)}\\
&
\;
\bigoplus
\{\Grad \BB{n+1}{\b\alpha}:\;\b\alpha\in \check{\c{I}}_{3}^{n+1}\}&&\;
\text{(gradient fields)}\nonumber\\
&
\;\;
\bigoplus_{F\in\mathcal{F}(T)}
\{
 \Xa{FT,n-1}{\b\alpha}:\;
 \b\alpha\in \tilde{\c{I}}^{n-1}_{3}(F)'
 \}&&\;\text{(face bubbles)}
 \nonumber\\
 &
\;\;\;
\bigoplus_{\ell=1}^2\left\{
      \Xb{n}{\ell,\b{\alpha}}: \b{\alpha}\in
       \mathring{\c{I}}_3^{n+1}
    \right\}
    \bigoplus\left\{    
      \Xb{n}{3,\b{\alpha}}: \b{\alpha}\in  \mathring{\c{I}}_3^{n+1}\;,\alpha_3=1
    \right\} 
    &&\;\text{(cell bubbles)}\nonumber
\end{alignat}
\end{subequations}
forms a basis for $\NDTwo{n}$.
\end{theorem}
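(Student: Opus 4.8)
The plan is to prove Theorem \ref{thm:curl} by a dimension-count-plus-linear-independence argument, leveraging the two preceding lemmas to handle the nontrivial face and cell bubbles. First I would verify that every listed function actually belongs to the target space: the $\b\omega_{ij}$ lie in $\NDOne{0}\subset\NDOne{n}$ by the lowest-order discussion; the gradients $\Grad\BB{n+1}{\b\alpha}$ lie in $\NDOne{n}$ since $\Grad\pol_{n+1}\subset\pol_n^3$; and the face and cell bubbles lie in $\NDOne{n}$ by Lemma \ref{lemma:face-curl-bubble} and Lemma \ref{lemma:cell-curl-bubble} respectively. For the second-kind space $\NDTwo{n}=\pol_n^3$ I would note additionally that the bubbles of one order lower, namely $\Xa{FT,n-1}{\b\alpha}$ and $\Xb{n}{\ell,\b\alpha}$, together with $\Grad\pol_{n+1}$ and the $\b\omega_{ij}$, all land in $\pol_n^3$.

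Next I would match dimensions. The dimension of $\NDOne{n}$ is $\dim\pol_n^3+\dim(\b x\times\widetilde\pol_n^3)=\frac{(n+1)(n+3)(n+4)}{2}$, which equals the combined count $6+\left({n+4\choose3}-4\right)+\left(4{n+2\choose2}-4\right)+\left(2{n+1\choose3}+{n\choose2}\right)$ recorded in Table \ref{table-dp-curl}; the individual counts come from $\dim\check{\c I}_3^{n+1}={n+4\choose3}-4$ (gradients), Lemma \ref{lemma:face-curl-bubble} ($4$ faces each contributing ${n+2\choose2}-1$), and Lemma \ref{lemma:cell-curl-bubble} (the cell bubbles). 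I would carry out the analogous arithmetic for $\NDTwo{n}=\pol_n^3$, whose dimension is $3{n+3\choose3}$, using Table \ref{table-dp-curl} with the order-shifted counts. Once the counts match the space dimension, it suffices to prove the listed functions are linearly independent.

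For linear independence I would exploit the geometric decomposition by traces, which cleanly separates the four families. Suppose a linear combination of all basis functions vanishes. Applying the tangential trace $\trt{,F}$ to each face $F$ and invoking the trace identities preceding Theorem \ref{thm:dim} (namely $\trt{,F}\b\omega_{i_ki_\ell}=\b\omega^F_{k\ell}$, $\trt{,F}\Grad\BB{n}{\b\alpha}=\Grad\BB{F,n}{\b\alpha_F}$, $\trt{,F}\Xa{FT,n}{\b\alpha}=\Xa{F,n}{\b\alpha_F}$, and $\trt{,F}\Xb{n+1}{\ell,\b\alpha}=\b0$ since the cell bubbles lie in $\b H_0(\mathrm{curl};T)$) reduces the trace to a combination of the two-dimensional basis \eqref{BB2d-first} on $F$. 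Since that collection is a basis for $\pol_n(F)^2\oplus\b x\times\widetilde\pol_n(F)$ by \cite[Theorem 3.6]{AinsworthAndriamaroDavydov15}, all coefficients attached to the three edge elements of $F$, the edge- and face-supported gradients, and the face bubbles on $F$ must vanish. Sweeping over all four faces kills every coefficient except those of the edge elements (recovered from edge restrictions), the interior gradients $\Grad\SSS_{n+1}^T$, and the cell bubbles. What remains is an interior identity in $\b H_0(\mathrm{curl};T)$, and here the gradient part $\Grad\SSS_{n+1}^T$ is separated from the cell-bubble part by Lemma \ref{lemma:cell-curl-bubble}, which asserts the cell bubbles are linearly independent and that their curls form a basis for the divergence-free bubbles; since a gradient has zero curl and the cell-bubble curls are independent, no gradient can be a combination of cell bubbles, forcing the remaining coefficients to vanish.

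The main obstacle I anticipate is the final interior step: cleanly disentangling the interior gradients $\Grad\BB{n+1}{\b\alpha}$ from the cell bubbles $\Xb{n+1}{\ell,\b\alpha}$. The trace argument eliminates the edge and face contributions painlessly, but both the interior gradients and the cell bubbles live in $\b H_0(\mathrm{curl};T)$ and have matching domain points, so the separation cannot come from traces. I would resolve it by applying the curl operator: $\Curl\Grad\BB{n+1}{\b\alpha}=\b0$, whereas by Lemma \ref{lemma:cell-curl-bubble} the curls of the cell bubbles are linearly independent; hence in any vanishing interior combination the cell-bubble coefficients must first vanish (from the independence of their curls), after which the interior gradients vanish because distinct Bernstein polynomials are linearly independent and $\Grad$ is injective modulo constants on the bubble space. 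Assembling these steps yields a basis for $\NDOne{n}$, and the identical argument with the order-shifted bubble families and the two-dimensional basis \eqref{BB2d-second} in place of \eqref{BB2d-first} yields the basis for $\NDTwo{n}$.
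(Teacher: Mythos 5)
Your proposal is correct and follows essentially the same route as the paper: membership via Lemmas \ref{lemma:face-curl-bubble} and \ref{lemma:cell-curl-bubble}, a dimension count matching $\dim\NDOne{n}=\tfrac{1}{2}(n+1)(n+3)(n+4)$, linear independence of the trace-supported functions by reduction to the two-dimensional basis on each face, and separation of the interior gradients from the cell bubbles via Lemma \ref{lemma:cell-curl-bubble}. Your explicit curl argument for that last interior step merely spells out what the paper leaves implicit when it says the independence is ``a direct consequence'' of the lemma.
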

\begin{proof}
Let us focus on the proof of the first statement.
The proof for the second is identical
to that for the first, and is omitted.

By Lemma~\ref{lemma:face-curl-bubble} and Lemma~\ref{lemma:cell-curl-bubble}, 
we have functions contained in the respective sets in \eqref{BB-curl-basis-1} lie in 
$\NDOne{n}$, whose dimensions are $6$,
${n+4\choose 3}-4$, $4 {n+2\choose 2}-4$, and 
$2{n+1\choose{3}} +{n\choose{2}}$, leading to 
$\frac{1}{2}(n+1)(n+3)(n+4)=\dim\NDOne{n}$ functions in total.

We are left to show the linear independence of functions in these sets.
Concerning the tangential trace on a face $F_1=\mathrm{conv}\{\b x_{2},\b x_{i_3},\b x_{i_4}\}$ of functions in each set, we have 
that only the following functions have non-vanishing traces:
\begin{alignat*}{2}
&\bigoplus
\{\b\omega_{k\ell}:\;k,\ell\in \{2,3,4\}\}&&\;\text{(lowest order)}\\
&
\;
\bigoplus
\{\Grad \BB{n+1}{\b\alpha}:\;\b\alpha\in \check{\c{I}}_{3}^{n+1}, \b\alpha_{1}=0\}&&\;
\text{(gradient fields)}\\
&
\;\;
\bigoplus
\{
 \Xa{F_4T,n}{\b\alpha}:\;
 \b\alpha\in \tilde{\c{I}}^{n}_{3}(F_4)'
 \}&&\;\text{(face bubbles)}.
\end{alignat*}
The tangential traces of the above functions, in turn,
form a set of basis for $\NDOne{n}$ on face $F_4$; see Theorem \ref{thm:dim}.
Hence, the above functions are themselves linearly independent.
Working 
on the other faces, we 
obtain the linear independence of the functions in \eqref{BB-curl-basis-1} with non-vanishing tangential traces.

The remaining tangential-trace-free functions are the gradient bubbles 
\[
\{\Grad \BB{n+1}{\b\alpha}:\;\b\alpha\in \mathring{\c{I}}_{3}^{n+1}\}
\]
and the non-gradient bubbles in the set \eqref{BBC-bubble}.
The linear independence of these two sets is a direct consequence of Lemma
\ref{lemma:cell-curl-bubble}.
This completes the proof.
\end{proof}

\section{Bernstein-B\'ezier $\b H(\mathrm{div})$ finite element on a tetrahedron}
\label{sec:div}
Here we construct Bernstein-B\'ezier basis for both Raviart-Thomas (RT) and Brezzi-Douglas-Marini(BDM) $\b H(\mathrm{div})$-finite
element on a tetrahedral~$T$:
$\RT{n}$ and $\mathbb{P}_n^3$.

Again, we seek a basis that
gives a clear separation between the 
\emph{curl} of the Bernstein polynomial basis for the $\b H(\mathrm{curl})$ space
constructed in the previous section, 
and the non-curl (not divergence-free) 
$\b H(\mathrm{div})$ bubbles.

\subsection{Lowest-order face elements}
The following 
{\it lowest-order face elements} are well-known:
\begin{align}
 \label{DIV3D-lowestorder}
 \b\chi_{\ell}:=\lambda_i\Grad\!\lambda_j\!\times\!\Grad\!\lambda_k-
 \lambda_j\Grad\!\lambda_i\!\times\!\Grad\!\lambda_k+
 \lambda_k\Grad\!\lambda_i\!\times\!\Grad\!\lambda_j,\;1\le i\!< \!j<\!k \le 4,
\end{align}
where $(i,j,k,\ell)$ is a permutation of $(1,2,3,4)$.
The set
$
 \{
 \b\chi_{\ell}:1\le \ell \le 4
 \}
$
form a basis of the space $\RT{0}$.

\subsection{Non-curl $\b H(\mathrm{div})$ bubbles}
The non-curl $\b H(\mathrm{div})$ bubbles take a similar form as their two-dimensional companions 
in \cite[Theorem 3.2]{AinsworthAndriamaroDavydov15}.
We use the following vector field to obtain the $\b H(\mathrm{div})$ bubbles:
\begin{align}
\label{BB-divbubble}
 \Xc{n}{\b\alpha}
  = (n+1)\BB{n}{\b{\alpha}}\sum_{\ell=1}^4(-1)^\ell\alpha_\ell\b\chi_{\ell},\;\;
 \b\alpha\in 
\INThree{n}.
\end{align}

The following result characterizes the above functions,
whose proof follows 
that for the two-dimensional case in \cite[Lemma 3.1]{AinsworthAndriamaroDavydov15}.
\begin{lemma}
\label{lemma:div-bubble}
We have $\Xc{n}{\b{\alpha}}\in \b{H}_0(\mathrm{div}; T)\cap\RT{n}$. 
Moreover, the set
\begin{align}
\label{BBD-bubble}
\left\{
      \Xc{n}{\b{\alpha}}: \b{\alpha}\in
       \INThree{n'}
    \right\}
\end{align}
consists of ${n+3\choose{3}} -1$ linearly independent 
vector fields in $\b{H}_0(\mathrm{div}; T)\cap\RT{n}$, 
whose divergence set 
\begin{align}
\label{BBD-bubble-div}
\left\{
      \Div\Xc{n}{\b{\alpha}}: \b{\alpha}\in
       \INThree{n'}
    \right\}
\end{align}
form a basis for the {\it average-zero} polynomial functions
$L^2_0(T)\cap \mathbb{P}_{n}$.
\end{lemma}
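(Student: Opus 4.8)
The plan is to mirror the two-dimensional argument of \cite[Lemma 3.1]{AinsworthAndriamaroDavydov15}, isolating three cheap structural facts first and reserving the real work for the divergence computation. To begin, I would record that each $\b\chi_\ell$ in \eqref{DIV3D-lowestorder} lies in $\RT{0}$ and has constant divergence, since $\Grad\lambda_i\times\Grad\lambda_j$ is a constant vector and every summand is affine in $\b x$. Because $\RT{n}=\pol_n^3+\b x\,\pol_n$, multiplication by the degree-$n$ Bernstein polynomial keeps us inside the space, $\pol_n\cdot\RT{0}\subseteq\pol_n^3\oplus\b x\widetilde\pol_n=\RT{n}$, so membership $\Xc{n}{\b\alpha}\in\RT{n}$ is immediate from \eqref{BB-divbubble}. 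For the boundary condition I would argue face by face: on the face $F_m$ opposite $\b x_m$ the only term of $\sum_\ell(-1)^\ell\alpha_\ell\b\chi_\ell$ with nonzero normal trace is $\ell=m$, and this term is annihilated because either $\alpha_m=0$ (the scalar coefficient vanishes) or $\alpha_m\ge 1$ (the factor $\lambda_m^{\alpha_m}$ in $\BB{n}{\b\alpha}$ vanishes on $F_m$). Hence $\trn{,F_m}\Xc{n}{\b\alpha}=0$ for every $m$, giving $\Xc{n}{\b\alpha}\in\b H_0(\mathrm{div};T)\cap\RT{n}$.

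Next, since $\Xc{n}{\b\alpha}\in\RT{n}$ we have $\Div\Xc{n}{\b\alpha}\in\pol_n$, and the divergence theorem together with the vanishing normal trace gives $\int_T\Div\Xc{n}{\b\alpha}=\int_{\partial T}\Xc{n}{\b\alpha}\cdot\b n=0$, so in fact $\Div\Xc{n}{\b\alpha}\in L^2_0(T)\cap\pol_n$ for every $\b\alpha\in\INThree{n}$. The bookkeeping $|\INThree{n'}|=\binom{n+3}{3}-1=\dim\big(L^2_0(T)\cap\pol_n\big)$ then matches exactly the count claimed in the statement.

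The crux is to show that $\{\Div\Xc{n}{\b\alpha}:\b\alpha\in\INThree{n}\}$ spans $L^2_0(T)\cap\pol_n$ and satisfies precisely one linear relation, so that discarding one arbitrary index yields a basis. I would compute the divergence explicitly in the Bernstein basis from $\Div\Xc{n}{\b\alpha}=(n+1)\big(\Grad\BB{n}{\b\alpha}\cdot\sum_\ell(-1)^\ell\alpha_\ell\b\chi_\ell+\BB{n}{\b\alpha}\sum_\ell(-1)^\ell\alpha_\ell\Div\b\chi_\ell\big)$, together with $\Grad\BB{n}{\b\alpha}=n\sum_k\BB{n-1}{\b\alpha-\b e_k}\Grad\lambda_k$ and the constants $\Grad\lambda_k\cdot\b\chi_\ell$ and $\Div\b\chi_\ell$ (all scalar triple products of the constant gradients $\Grad\lambda_k$). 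This expresses $\Div\Xc{n}{\b\alpha}$ as an explicit combination of the $\BB{n}{\b\beta}$ with $\b\beta\in\{\b\alpha,\;\b\alpha+\b e_i-\b e_j\}$, reducing the spanning statement to a rank assertion for the resulting sparse, combinatorially structured coefficient matrix. Showing this matrix has corank exactly one—equivalently, that the only dependency among the divergences is the global one already forced by the single constraint $\int_T(\cdot)=0$—is the main obstacle; I expect the 2D computation of \cite[Lemma 3.1]{AinsworthAndriamaroDavydov15} to transfer with the edge elements replaced by the face elements $\b\chi_\ell$, and it may be cleanest to verify the rank by pairing $\Div\Xc{n}{\b\alpha}$ against a dual Bernstein basis rather than inverting the matrix directly.

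Finally, the linear independence of the bubbles themselves follows for free: if $\sum_{\b\alpha\in\INThree{n'}}c_{\b\alpha}\Xc{n}{\b\alpha}=0$, then applying $\Div$ and using that $\{\Div\Xc{n}{\b\alpha}:\b\alpha\in\INThree{n'}\}$ is a basis (hence linearly independent) forces every $c_{\b\alpha}=0$. This produces the $\binom{n+3}{3}-1$ linearly independent fields in $\b H_0(\mathrm{div};T)\cap\RT{n}$ whose divergences form a basis of $L^2_0(T)\cap\pol_n$, completing the proof.
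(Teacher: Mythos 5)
Your overall architecture matches the paper's: establish membership in $\b H_0(\mathrm{div};T)\cap\RT{n}$ and the zero-average property of the divergences by the easy structural arguments you give, observe that the cardinality of the index set equals $\dim\left(L^2_0(T)\cap\pol_n\right)=\binom{n+3}{3}-1$, and reduce the whole lemma to showing that the divergences indexed by the full set $\INThree{n}$ admit exactly one linear relation. Your face-by-face normal-trace argument, the divergence-theorem argument for the zero average, and the deduction of independence of the bubbles from independence of their divergences are all correct and consistent with the paper.

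The gap is that the step you yourself label ``the main obstacle'' --- the corank-one claim for the coefficient matrix --- is never actually proved; you only record the expectation that the two-dimensional computation transfers, and the alternative you float (pairing against a dual Bernstein basis) is not developed into an argument. This is precisely where all the content of the lemma lives. The paper closes it as follows: writing $\sum_{\b\alpha\in\INThree{n}}\cd\,\Div\Xc{n}{\b\alpha}=0$, expanding each term via the identity $\Div\left(\BB{n}{\b\alpha}\,\lambda_i\Grad\lambda_j\times\Grad\lambda_k\right)=(\alpha_i+1)\left(\BB{n}{\b\alpha}-\BB{n}{\b\alpha+\b e_i-\b e_\ell}\right)\epsilon_{ijk}$, and collecting coefficients of the linearly independent Bernstein polynomials yields a square homogeneous system $\cdt=0$ coupling each $\cd$ to its neighbours $\cdn$. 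The decisive observation is that this system is irreducible and weakly diagonally dominant with each row summing to zero, so its kernel is exactly the one-dimensional space of constant vectors $\cd\equiv c$; deleting a single index from $\INThree{n}$ then forces $c=0$ and hence linear independence of the set \eqref{BBD-bubble-div}. Without an argument of this type (or a genuine substitute, such as an explicit dual pairing that you would still have to construct and verify), the lemma is not established: ``spans with exactly one relation'' is a rank assertion about a large structured matrix, and nothing in your write-up pins that rank down.
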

\begin{proof}
Since $\b\chi_\ell\in\RT{0}$ whose normal trace is constant on a single face and vanishes elsewhere,
it is trivial to show  
$\Xc{n}{\b{\alpha}}\in \b{H}_0(\mathrm{div}; T)\cap\RT{n}$ and 
$\Div\Xc{n}{\b{\alpha}}\in {L}^2_0(T)\cap\mathbb{P}_{n}$.
Since the number of functions in either of the sets \eqref{BBD-bubble} 
and \eqref{BBD-bubble-div}
is $\#\INThree{n'}={n+3\choose3}-1$, which equals the dimension of the space
$L^2_0(T)\cap \mathbb{P}_{n}$, we only need to prove
the functions in the set 
\eqref{BBD-bubble-div} are linearly independent to conclude the proof of Lemma \ref{lemma:div-bubble}. 
We follow the proof of \cite[Lemma 3.1]{AinsworthAndriamaroDavydov15}.

Let coefficients $\cd$ be such that 
\[
 \sum_{\b\alpha\in \INThree{n}}\cd\Div\Xc{n}{\b\alpha} = 0.
\]
To simplify notation, we denote 
\[
 \epsilon_{ijk} := \Grad\lambda_i\cdot(\Grad\lambda_j\times  \Grad\lambda_k),
\]
so that, in particular,
\[
  \epsilon_{123}= -\epsilon_{124}= \epsilon_{134}= -\epsilon_{234}.
\]

Given 
a permutation $(i,j,k,\ell)$ of $(1,2,3,4)$ with $i<j<k$,
a simple calculation reveals that 
\begin{align}
\label{div-0}
 \Div\left(\BB{n}{\b\alpha}\,\lambda_i\Grad\lambda_j\times \Grad\lambda_k\right)
 = (\alpha_i+1)(\BB{n}{\b\alpha}-
 \BB{n}{\b\alpha+\b e_i-\b e_\ell})\epsilon_{ijk}.
\end{align}
By definition \eqref{BB-divbubble}, we notice that
$\Xc{n}{\b{\alpha}}$ consists of a linear combination of 
$4\times 3 =12$ terms of the form 
$\BB{n}{\b\alpha}\,\lambda_i\Grad\lambda_j\times \Grad\lambda_k$.
Applying \eqref{div-0}, and after straightforward algebraic manipulation, we obtain that 
\[
  \sum_{\b\alpha\in \INThree{n}}\cd\Div\Xc{n}{\b\alpha} = 
   \epsilon_{123}(n+1)\sum_{\b\alpha\in \INThree{n}}\cdt\BB{n}{\b\alpha},
\]
where 
\begin{align}
 \cdt := \sum_{i=i}^4\sum_{\overset{j=1}{j\not=i}}^4
 \alpha_i(\alpha_j+1)
 \,\cd
 -
 \sum_{i=i}^4\sum_{\overset{j=1}{j\not=i}}^4
 (\alpha_i+1)\alpha_j\,\cdn.
\end{align}
The linear independence of the Bernstein polynomials means that 
$\cdt=0$ for all $\b\alpha\in\INThree{n}$. This latter
condition constitutes a square linear system for the coefficients
$\cd$. The system is irreducible and weakly diagonally dominant, with
the coefficients in each row of the system summing to zero. 
Hence, there exists $c\in\RR$ such that $\cd=c$ 
for all $\b{\alpha}\in\INThree{n}$, which immediately implies the linear independence of functions in the set \eqref{BBD-bubble-div}. This completes the proof.
\end{proof}

\subsection{Bernstein-B\'ezier $\b H(\mathrm{div})$ basis}
Combing the results in Lemma \ref{lemma:face-curl-bubble}, Lemma \ref{lemma:cell-curl-bubble} 
and Lemma \ref{lemma:div-bubble}, 
we are ready to present the Bernstein-B\'ezier $\b H(\mathrm{div})$ basis for both $\RT{n} (n\ge 0)$, and $\pol_{n}^3 (n\ge 1)$.
The results are collected in the following theorem.

\begin{theorem}
\label{thm:div}
The set
\begin{subequations}
\label{BB-div-basis}
\begin{alignat}{2}
 \label{BB-div-basis-1}
\bigoplus_{\ell=1}^4
\{\b\chi_{\ell}\}\;
\bigoplus \b W^n\;
\bigoplus\{ 
      \Xc{n}{\b{\alpha}}: \b{\alpha}\in
       \INThree{n'}
\}
 \end{alignat}
forms a basis for $\RT{n}$,
and the set
\begin{alignat}{2}
 \label{BB-div-basis-2}
\bigoplus_{\ell=1}^4
\{\b\chi_{\ell}\}\;
\bigoplus \b W^n\;
\bigoplus\{ 
      \Xc{n-1}{\b{\alpha}}: \b{\alpha}\in
       \INThree{n-1'}
\}
 \end{alignat}
\end{subequations}
forms a basis for $\pol_{n}^3$.
Here the set $\b W^n$ contains the divergence-free fields obtained by taking the curl of 
the $H(\mathrm{curl})$-basis functions:
\begin{align*}
 \b W^n = &\;
 \bigoplus_{F\in \mathcal{F}(T)}
\{
\Curl \Xa{FT,n}{\b\alpha}:\;
 \b\alpha\in \tilde{\c{I}}^{n}_{3}(F)'
 \}\\
&\;\;\;\;\;\bigoplus_{\ell=1}^2\left\{
      \Curl\Xb{n+1}{\ell,\b{\alpha}}: \b{\alpha}\in
       \mathring{\c{I}}_3^{n+2}
    \right\}
\bigoplus\left\{ \Curl   
      \Xb{n+1}{3,\b{\alpha}}: \b{\alpha}\in  \mathring{\c{I}}_3^{n+2}\;,\alpha_3=1
    \right\}.
\end{align*}
\end{theorem}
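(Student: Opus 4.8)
The plan is to establish Theorem \ref{thm:div} by a dimension-count-plus-linear-independence argument that exactly parallels the proof of Theorem \ref{thm:curl}. First I would verify that every listed function lies in the target space: the $\b\chi_\ell$ span $\RT{0}\subset\RT{n}$, the set $\b W^n$ consists of curls of $\b H(\mathrm{curl})$-functions and hence lies in $\b H(\mathrm{div}^0;T)\subset\RT{n}$ (using $\Div\Curl\equiv 0$ together with $\Curl\NDOne{n}\subset\RT{n}$), and the bubbles $\Xc{n}{\b\alpha}\in \b H_0(\mathrm{div};T)\cap\RT{n}$ by Lemma \ref{lemma:div-bubble}. Then I would tally the cardinalities: $4$ lowest-order fields; $\dim \b W^n = \bigl(4{n+2\choose 2}-4\bigr) + \bigl(2{n+1\choose 3}+{n\choose 2}\bigr)$ coming from the face and cell curls in Lemma \ref{lemma:face-curl-bubble} and Lemma \ref{lemma:cell-curl-bubble}; and ${n+3\choose 3}-1$ non-curl bubbles from Lemma \ref{lemma:div-bubble}. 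The arithmetic should sum to $\dim\RT{n}=\frac{1}{2}(n+1)(n+2)(n+4)$, so it suffices to prove linear independence.

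For linear independence I would decompose any vanishing linear combination according to its image and its normal traces. The key structural fact is that $\Div$ annihilates $\{\b\chi_\ell\}_{\text{traceless part}}$ and all of $\b W^n$, while mapping the $\Xc{n}{\b\alpha}$ onto a basis of $L^2_0(T)\cap\pol_n$ by Lemma \ref{lemma:div-bubble}; applying $\Div$ to the relation therefore forces all coefficients of the $\Xc{n}{\b\alpha}$ to vanish. I am then left with a relation among $\{\b\chi_\ell\}$ and $\b W^n$ inside $\b H(\mathrm{div}^0;T)\cap\RT{n}$. To separate these I would use the normal-trace operator $\trn{,F}$: the face-bubble curls $\Curl\Xa{FT,n}{\b\alpha}$ have non-vanishing normal trace only on their own face $F$, where by the identity $\trn{,F}(\Curl\Xa{FT,n}{\b\alpha})=\Curl\Xa{F,n}{\b\alpha_F}$ stated before Theorem \ref{thm:dim} they restrict to the $2$D face curls that are linearly independent on $F$, while the cell-bubble curls $\Curl\Xb{n+1}{\ell,\b\alpha}$ are normal-trace-free since $\Xb{n+1}{\ell,\b\alpha}\in\b H_0(\mathrm{curl};T)$. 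Matching normal traces face by face kills the face-bubble coefficients (and pins down the $\b\chi_\ell$ coefficients), leaving only the interior cell-bubble curls.

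The final and main obstacle is the linear independence of the remaining interior piece, namely the curls $\Curl\Xb{n+1}{\ell,\b\alpha}$ together with what survives of the lowest-order span inside $\b H_0(\mathrm{div}^0;T)\cap\RT{n}$. Here I would invoke Lemma \ref{lemma:cell-curl-bubble} directly: it asserts precisely that the set \eqref{BBC-bubble-curl} forms a \emph{basis} for $\b H_0(\mathrm{div}^0;T)\cap\pol_n^3$, so these curls are independent and no nontrivial combination can equal the (trace-free, divergence-free) remainder of the $\b\chi_\ell$ contribution except the zero combination. The subtle point to check carefully is the interaction between the lowest-order space $\RT{0}$ and the cell-bubble curls, i.e.\ that $\b\chi_\ell$ does not lie in the span of the interior curls; this follows because $\b H_0(\mathrm{div}^0;T)\cap\pol_n^3$ excludes the non-zero constant normal-trace behaviour carried by the $\b\chi_\ell$, but it is the step where the geometric decomposition must be reconciled with the trace bookkeeping. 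Once this is settled, all coefficients vanish, establishing linear independence and hence that \eqref{BB-div-basis-1} is a basis for $\RT{n}$; the argument for \eqref{BB-div-basis-2} and $\pol_n^3$ is identical after replacing $n$ by $n-1$ in the bubble indices and adjusting the dimension count, and is omitted.
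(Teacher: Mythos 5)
Your proposal is correct and follows essentially the same route as the paper: verify membership in $\RT{n}$, count that the three groups of functions total $\dim\RT{n}$, and reduce linear independence to Lemmas \ref{lemma:face-curl-bubble}, \ref{lemma:cell-curl-bubble} and \ref{lemma:div-bubble}. In fact you supply a detail the paper leaves implicit—namely the cross-set independence argument (apply $\Div$ to kill the $\Xc{n}{\b\alpha}$ coefficients since their divergences span $L^2_0(T)\cap\pol_n$ while the $\b\chi_\ell$ contribute only constants, then match normal traces face by face, then invoke Lemma \ref{lemma:cell-curl-bubble} for the interior curls)—and that argument is sound, modulo the loose phrasing that $\Div$ ``annihilates'' the $\b\chi_\ell$, whose divergences are in fact nonzero constants complementary to $L^2_0(T)$.
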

\begin{proof}
The fact that the functions in the sets lie in the finite element spaces and are linearly independent are due to 
Lemma \ref{lemma:face-curl-bubble}, Lemma \ref{lemma:cell-curl-bubble}, 
and Lemma \ref{lemma:div-bubble}.

The number of functions contained in the respective sets in \eqref{BB-div-basis-1} are
$4$, $4{n+2\choose2}-4+2{n+1\choose3}+{n\choose2}$, ${n+3\choose 3}-1$, leading to $\frac{1}{2}(n+1)(n+2)(n+4)=\dim \RT{n}$ functions in total; 
similarly, the total number of functions in \eqref{BB-div-basis-2} is
\begin{align*}
  4+4{n+2\choose2}-4+2{n+1\choose3}+{n\choose2}+
 {n+2\choose 3}-1 = &\;
 \frac{1}{2}(n+1)(n+2)(n+3)\\
 =&\;\dim \pol_n^3.
\end{align*}
Hence the sets \eqref{BB-div-basis-1} and \eqref{BB-div-basis-2} form 
bases for $\RT{n}$ and $\pol_n^3$, respectively.
This completes the proof.
\end{proof}

\section{Efficient computation of mass and stiffness matrices with the basis}
\label{sec:comp}
In this section, we show how to explicitly compute the mass and stiffness matrices associated with the 
$\b H(\mathrm{curl})$- and $\b H(\mathrm{div})$-basis functions developed in the previous two sections
for piecewise constant coefficients on an affine tetrahedron. 
Such explicit computation is made possible by the fact that our basis functions can be expressed as a linear combination of Bernstein polynomials, and then exploiting the fact that
the product of two Bernstein polynomials is another scaled Bernstein polynomial, and the integral of Bernstein polynomial has explicit formula.

\newcommand{\Szero}[1]{\mathrm{S_{#1}^0}}
\newcommand{\Sone}[1]{\mathrm{S_{#1}^1}}
\newcommand{\Stwo}[1]{\mathrm{S_{#1}^2}}
\newcommand{\Cab}[1]{\mathrm{C_{#1}}}
\newcommand{\Mab}[1]{\mathrm{M_{#1}}}

\subsection{Notation}
The following geometric quantities of the tetrahedron $T$ will be used:
\begin{subequations}
\begin{align}
 \label{geom-1}
     \b t_{ij} = &\;\Grad \lambda_i\times\Grad\lambda_j,\\
  \epsilon_{ijk} =&\;
  \b t_{ij}\cdot \lambda_k,
\end{align}
and
 \label{geom}
 \begin{align}
  \Szero{i,j} = &\;\int_T \Grad \lambda_i\cdot\Grad\lambda_j\, \mathrm{dx},\\
  \Sone{ij,kl} =&\; \int_T 
  \b t_{ij}\cdot  \b t_{kl}
  \, \mathrm{dx},\\
\Stwo{ijk} =&\; \int_T 
 (\epsilon_{ijk})^2\, \mathrm{dx}.
 \end{align}
\end{subequations}
We also use the Bernstein coefficients, which is independent of geometry:
 \begin{align}
 \label{bb-coef}
   \Mab{\b\alpha,\b\beta}= &\frac{
{\b\alpha+\b\beta\choose\b\alpha}}
{  {|\b\alpha|+|\b\beta|\choose|\b\alpha|}\;
    {|\b\alpha|+|\b\beta|+3\choose3}}
 \end{align}
 By properties of Bernstein polynomials, we have
 \[
  \BB{n}{\b{\alpha}}
    \BB{m}{\b{\beta}} =
    \frac{
{\b\alpha+\b\beta\choose\b\alpha}}
{  {n+m\choose n}}
    \BB{n+m}{\b{\alpha}+\b\beta},
    \quad\quad
\int_T    \BB{n}{\b{\alpha}} \BB{m}{\b{\beta}}\,\mathrm{dx} = 
\mathrm{M_{\b\alpha,\b\beta}}|T|.
 \]

\subsection{Transform to pure Bernstein-B\'ezier form}
First, let us transform all the basis functions along with their differential operators
back to a linear combination of Bernstein polynomials. 

\subsubsection{$\b H(\mathrm{curl})$-basis transformation}
We have four type of basis functions, namely, the lowest order edge elements $\b\omega_{ij}$ (Type 1), the gradient fields 
$\Grad\BB{n+1}{\b\alpha}
$ (Type 2), 
the face  bubbles $\Xa{FT,n}{\b\alpha}$ (Type 3), and the cell bubbles
$\Xb{T,n+1}{\ell,\b\alpha}$ (Type 4).

For compactness of the presentation, 
we just derive basis transformation for Type 3 basis below, 
and leave out details for the others.
The transformation of these basis functions back to Bernstein-B\'ezier forms, along with their curls, are listed in Table \ref{table-curl}, with the corresponding coefficients given in Table \ref{table-curl-coef}.

For Type 3 basis for the face $F_\ell = \mathrm{conv}\{\b x_i,\b x_j,\b x_k\}$, we have 
\begin{align*}
 \Xa{F_{\ell}T,n}{\b\alpha}= &\; 
(n+1) \BB{n}{\b\alpha}(\alpha_{i}\,\b\omega_{jk}-
 \alpha_{j}\,\b\omega_{ik}+\alpha_{k}\,\b\omega_{ij})\\
 = &\; 
(n+1) \BB{n}{\b\alpha}\Big(\lambda_i(\alpha_k\Grad\lambda_j-\alpha_j\Grad\lambda_k)
\\
&\;
\hspace{1.6cm}+
\lambda_j(\alpha_i\Grad\lambda_k-\alpha_k\Grad\lambda_i)
\\
&\;\hspace{1.6cm}
+
\lambda_k(\alpha_j\Grad\lambda_i-\alpha_i\Grad\lambda_j)
\Big)\\
 = &\; 
\BB{n+1}{\b\alpha+\b e_i}(\alpha_i+1)(\alpha_k\Grad\lambda_j-\alpha_j\Grad\lambda_k)
\\
&\;
+
\BB{n+1}{\b\alpha+\b e_j}(\alpha_j+1)(\alpha_i\Grad\lambda_k-\alpha_k\Grad\lambda_i)
\\
&\;
+
\BB{n+1}{\b\alpha+\b e_k}(\alpha_k+1)(\alpha_j\Grad\lambda_i-\alpha_i\Grad\lambda_j)
\\
=&\;
\sum_{\b\sigma\in \Sigma_\ell}
\BB{n+1}{\b\alpha+\b e_{\sigma_1}}({\alpha_{\sigma_1}+1})
(\alpha_{\sigma_3}\Grad\lambda_{\sigma_2}-
\alpha_{\sigma_2}\Grad\lambda_{\sigma_3})
\end{align*}
 where $\b\sigma = (\sigma_1,\sigma_2,\sigma_3)$ and the set 
 \begin{align}
  \label{set-f}
    \Sigma_\ell:=\{(i,j,k), (j,k,i), (k,i,j)\},
 \end{align}
where
 $(i,j,k,\ell)$ form a permutation of $(1,2,3,4)$.
We also have 
\begin{align*}
\Curl \Xa{F_{\ell}T,n}{\b\alpha}
=&\;
\sum_{\b\sigma\in \Sigma_\ell}
({\alpha_{\sigma_1}+1})\Grad\BB{n+1}{\b\alpha+\b e_{\sigma_1}}\times
(\alpha_{\sigma_3}\Grad\lambda_{\sigma_2}-
\alpha_{\sigma_2}\Grad\lambda_{\sigma_3})\\
=&\;
(n+1)\sum_{\b\sigma\in \Sigma_\ell}\sum_{k=1}^3
\BB{n}{\b\alpha+\b e_{\sigma_1}\!\!-\b e_{\sigma_k}}
({\alpha_{\sigma_1}+1})
(\alpha_{\sigma_3}\b t_{\sigma_k\sigma_2}-
\alpha_{\sigma_2}\b t_{\sigma_k\sigma_3})
\end{align*}

\begin{table}[ht]
\caption{Bernstein-B\'ezier transformation of $\b H(\mathrm{curl})$-basis functions}
 \centering
\begin{tabular}{l| c| c |c }
\hline
\noalign{\smallskip}
      & basis &  BB-form & BB-form of its curl \\
      \hline
\noalign{\smallskip}
Type 1& $\b\omega_{ij}$ &
$\BB{1}{\b e_i}\Grad\lambda_j-\BB{1}{\b e_j}\Grad\lambda_i$
&$2\,\b t_{ij}$\\
\noalign{\smallskip}
\hline
\noalign{\smallskip}
Type 2& $\Grad\BB{n+1}{\b\alpha}$ &
$ \sum_{i = 1}^4  \b{{c}}_{\b\alpha,i}^{(2)}\,\,\BB{n}{\b\alpha-\b e_i}$
&$\b 0$\\
\noalign{\smallskip}
\hline
\noalign{\smallskip}
Type 3& 
$\Xa{F_{\ell}T,n}{\b\alpha}$&
$\sum_{\b\sigma\in \Sigma_\ell}
 {\b{c}}_{\b\alpha,\b\sigma}^{(3)} \,\,
\BB{n+1}{\b\alpha+\b e_{\sigma_1}}
$
&
$\sum_{\b\sigma\in \Sigma_\ell}
\sum_{k=1}^3
\b d_{\b\alpha,\b\sigma,k}^{(2)}\;
\BB{n}{\b\alpha+\b e_{\sigma_1}-\b e_{\sigma_k}}
$\\
\noalign{\smallskip}
\hline
\noalign{\smallskip}
Type 4& $
\Xb{T,n+1}{\ell,\b\alpha}$&
$  \sum_{i=1}^4 \b c_{\b\alpha,\ell,i}^{(4)}\;\BB{n+1}{\b{\alpha}-\b e_i}$&
$  \sum_{i=1}^4 \b d_{\b\alpha,\ell,i}^{(3)}\;\BB{n}{\b{\alpha}-\b e_\ell-\b e_i}$
\\
\noalign{\smallskip}
\hline
\end{tabular}
\label{table-curl}
\end{table}

\begin{table}[ht]
\caption{Bernstein-B\'ezier transformation of $\b H(\mathrm{curl})$-basis functions}
 \centering
\begin{tabular}{r l}
\hline
\noalign{\smallskip}
${\b{c}}_{\b\alpha,i}^{(2)}$\!\!\! &$=\;
 (n+1)\Grad\lambda_i$
 \\
\noalign{\smallskip}
\hline
\noalign{\smallskip}
$ {\b{c}}_{\b\alpha,\b\sigma}^{(3)}$\!\!\! &$=\;
({\alpha_{\sigma_1}+1})
(\alpha_{\sigma_3}\Grad\lambda_{\sigma_2}-
\alpha_{\sigma_2}\Grad\lambda_{\sigma_3})$
\\
\noalign{\smallskip}
\hline
\noalign{\smallskip}
$ {\b{d}}_{\b\alpha,\b\sigma,k}^{(2)} $\!\!\!&$=\;
(n+1)({\alpha_{\sigma_1}+1})
(\alpha_{\sigma_3}\b t_{\sigma_k\sigma_2}-
\alpha_{\sigma_2}\b t_{\sigma_k\sigma_3})$
\\
\noalign{\smallskip}
\hline
\noalign{\smallskip}
$ {\b{c}}_{\b\alpha,\ell, i}^{(4)} $\!\!\!&$=\;
 (\delta_{il}(n+2)-\alpha_i)\Grad\lambda_i$\\
\noalign{\smallskip}
\hline
\noalign{\smallskip}
$ {\b{d}}_{\b\alpha,\ell,i}^{(3)} $\!\!\!&$=\;
(n+2)(n+1)\b t_{i\ell}$\\
\noalign{\smallskip}
\hline
 \end{tabular}
 \label{table-curl-coef}
\end{table}
Here $\delta_{i\ell}$ is the Kronecker delta.

\subsubsection{$\b H(\mathrm{div})$-basis transformation}
We also have four type of basis functions, namely, the lowest order face elements $\b\chi_{\ell}$ (Type 1), the div-free face bubbles 
$\Curl\Xa{F_{\ell}T,n}{\b\alpha}
$ (Type 2), 
the div-free cell  bubbles $\Curl\Xb{T,n+1}{\ell,\b\alpha}
$ (Type 3), and the non-curl cell bubbles
$ \Xc{n}{\b\alpha}
$ (Type 4).
The transformation of these 
basis functions back to Bernstein-B\'ezier forms, along with their divergence, are listed in Table \ref{table-div}, 
where the coefficients $\b d_{\b\alpha,\b\sigma,k}^{(2)}$ and 
$\b d_{\b\alpha,\ell, i}^{(3)}$ are given in Table \ref{table-curl-coef}, and 
the coefficients $\b d_{\b\alpha,i}^{(4)}$ and 
$\b g_{\b\alpha, i,m}^{(2)}$ are given in Table \ref{table-div-coef}. Again, we leave out details of the derivation.
Here $(i,j,k,l)$ form a cyclic permutation of $(1,2,3,4)$.
\begin{table}[ht!]
\caption{Bernstein-B\'ezier transformation of $\b H(\mathrm{div})$-basis functions}
 \centering
\begin{tabular}{l| c| c |c }
\hline
\noalign{\smallskip}
      & basis &  BB-form & BB-form of its divergence \\
      \hline
\noalign{\smallskip}
Type 1& $\b\chi_{\ell}$ &
$
\sum_{\b\sigma\in\Sigma_{\ell^p}}\b t_{\sigma_2\sigma_3}\,
\BB{1}{\b e_{\sigma_1}}
$
&$(-1)^\ell3\,\epsilon_{123}$\\
\noalign{\smallskip}
\hline
\noalign{\smallskip}
Type 2& $\Curl\Xa{F_{\ell}T,n}{\b\alpha}$&$
\sum_{\b\sigma\in \Sigma_\ell}
\sum_{k=1}^4
\b d_{\b\alpha,\b\sigma,k}^{(2)}\;
\BB{n}{\b\alpha+\b e_{\sigma_1}-\b e_k}
$ &
$ 0$\\
\noalign{\smallskip}
\hline
\noalign{\smallskip}
Type 3& $\Curl\Xb{T,n+1}{\ell,\b\alpha}$&
$  \sum_{i=1}^4 \b d_{\b\alpha,\ell,i}^{(3)}\;\BB{n}{\b{\alpha}-\b e_\ell-\b e_i}$
&
$0
$\\
\noalign{\smallskip}
\hline
\noalign{\smallskip}
Type 4& $
   \Xc{n}{\b\alpha}
  $ &$
\sum_{i=1}^4\b d_{\b\alpha,i}^{(4)}\BB{n+1}{\b{\alpha}+\b e_i}
  $&
$\sum_{i=1}^4\sum_{j=1}^4 g_{\b\alpha, i,j}^{(2)}\;\;
  \BB{n}{\b{\alpha}+\b e_i-\b e_j}
$
\\
\noalign{\smallskip}
\hline
\end{tabular}
\label{table-div}
\end{table}
\begin{table}[ht!]
\caption{The coefficients}
 \centering
\begin{tabular}{r l}
\hline
\noalign{\smallskip}
$\b d_{\b\alpha,i}^{(4)}$\!\!\! &$=\;
 (-1)^{i+1}(\alpha_i+1)\sum_{\b\sigma\in\Sigma_i}\alpha_{\sigma_1}\,\b t_{\sigma_2\sigma_3}
  $
 \\
\noalign{\smallskip}
\hline
\noalign{\smallskip}
$ g_{\b\alpha, i,j}^{(2)}$\!\!\! &$=\;
(n+1)(\alpha_i+1)(\delta_{ij}\,n-\alpha_j)\, \epsilon_{123}
$
\\
\noalign{\smallskip}
\hline
 \end{tabular}
 \label{table-div-coef}
\end{table}

\subsection{Mass and stiffness matrices for $\b H(\mathrm{curl})$ basis functions}
Now, let us give formula for the mass and stiffness matrices for the $\b H(\mathrm{curl})$ basis functions.
For ease of notation, suppose that we have ordered the basis functions such that
\begin{align}
 \label{curl-ordering}
 \b\phi_p^1 = \b\omega_{i^pj^p},\;\;
 \b\phi_p^2 = \Grad\BB{n+1}{\b\alpha^p},\;\;
 \b\phi_p^3 = \Xa{F_{\ell^p}T,n}{\b\alpha^p},\;\;
 \b\phi_p^4 = \Xb{T,n+1}{\ell^p,\b\alpha^p},
\end{align}
where $p\in \mathbb{N}^+$ is an integer and the superscript $p$ above indicates the corresponding indices are determined by $p$.

Due to the heterogeneity of the basis functions, the mass and stiffness matrices consists of $4\times 4$ blocks. 
By symmetry, we need to work out the computation for the $10$ upper diagonal blocks for each matrix. 
Our calculation is given below.

\subsubsection{$\b H(\mathrm{curl})$ mass matrix}
The first four blocks:
\begin{subequations}
 \label{curl-mass}
 \begin{align}
\int_T  \b\phi_p^1\cdot\b\phi_q^1\,\mathrm{dx} = &\;
\Mab{\b e_{i^p},\b e_{i^q}}\Szero{j^p,j^q}-
\Mab{\b e_{i^p},\b e_{j^q}}\Szero{j^p,i^q}\nonumber\\
&\;-
\Mab{\b e_{j^p},\b e_{i^q}}\Szero{i^p,j^q}+
\Mab{\b e_{j^p},\b e_{j^q}}\Szero{i^p,i^q}\\
\int_T  \b\phi_p^1\cdot\b\phi_q^2\,\mathrm{dx} = &\;
(n+1)\sum_{s=1}^4 \left(\Mab{\b e_{i^p},\b \alpha^q-\b e_s}
\Szero{j^p,s}-\Mab{\b e_{j^p},\b \alpha^q-\b e_s}
\Szero{i^p,s}\right)\\
\int_T  \b\phi_p^1\cdot\b\phi_q^3\,\mathrm{dx} = &\;
\!\!\!\sum_{\b\sigma\in\Sigma_{\ell^q}}\!\!\! \Mab{\b e_{i^p},\b\alpha^q+\b e_{\sigma_1}}
({\alpha^q_{\sigma_1}+1})
(\alpha^q_{\sigma_3}\Szero{j^p,\sigma_2}-
\alpha^q_{\sigma_2}\Szero{j^p,\sigma_3})\nonumber\\
&-\sum_{\b\sigma\in\Sigma_{\ell^q}}\!\!\! \Mab{\b e_{j^p},\b\alpha^q+\b e_{\sigma_1}}
({\alpha^q_{\sigma_1}+1})
(\alpha^q_{\sigma_3}\Szero{i^p,\sigma_2}-
\alpha^q_{\sigma_2}\Szero{i^p,\sigma_3})\\
\int_T  \b\phi_p^1\cdot\b\phi_q^4\,\mathrm{dx} = &\;
\sum_{s=1}^4 (\delta_{s\ell^q}(n+2)-\alpha^q_s)(\Mab{\b e_{i^p},\b \alpha^q-\b e_s}
\Szero{j^p,s}-
\Mab{\b e_{j^p},\b \alpha^q-\b e_s}
\Szero{i^p,s})
 \end{align}
The next three blocks:
 \begin{align}
\int_T  \b\phi_p^2\cdot\b\phi_q^2\,\mathrm{dx} = &\;
(n+1)^2\sum_{t=1}^4\sum_{s=1}^4 
\Mab{\b \alpha^p-\b e_t,\b \alpha^q-\b e_s}\,
\Szero{t,s}\\
\int_T  \b\phi_p^2\cdot\b\phi_q^3\,\mathrm{dx} = &\;
(n+1)\sum_{t=1}^4\sum_{\b\sigma\in\Sigma_{\ell^q}}
\!\!\! \Mab{\b\alpha^p-\b e_t,\b\alpha^q+\b e_{\sigma_1}}({\alpha^q_{\sigma_1}+1})
(\alpha^q_{\sigma_3}\Szero{t,\sigma_2}\!\!-
\alpha^q_{\sigma_2}\Szero{t,\sigma_3})\\
\int_T  \b\phi_p^2\cdot\b\phi_q^4\,\mathrm{dx} = &\;
(n+1)\sum_{t=1}^4\sum_{s=1}^4 (\delta_{s\ell^q}(n+2)-\alpha^q_s)\Mab{\b\alpha^p-\b e_t,\b \alpha^q-\b e_s}
\Szero{t,s}
 \end{align}
The next two blocks:
 \begin{align}
\int_T  \b\phi_p^3\cdot\b\phi_q^3\,\mathrm{dx} = &\;
\sum_{\b\tau\in\Sigma_{\ell^p}}\sum_{\b\sigma\in\Sigma_{\ell^q}}
\!\!\! \Mab{\b\alpha^p+\b e_{\tau_1},\b\alpha^q+\b e_{\sigma_1}}
({\alpha^p_{\tau_1}+1})
({\alpha^q_{\sigma_1}+1})\nonumber\\
&\;(\alpha^p_{\tau_3}\alpha^q_{\sigma_3}\Szero{\tau_2,\sigma_2}\!\!-
\alpha^p_{\tau_3}\alpha^q_{\sigma_2}\Szero{\tau_2,\sigma_3}
\!\!\!-
\alpha^p_{\tau_2}\alpha^q_{\sigma_3}\Szero{\tau_3,\sigma_2}
\!\!\!+\alpha^p_{\tau_2}\alpha^q_{\sigma_2}\Szero{\tau_3,\sigma_3})\\
\int_T  \b\phi_p^3\cdot\b\phi_q^4\,\mathrm{dx} = &\;
\sum_{\b\tau\in\Sigma_{\ell^p}}\sum_{s=1}^4
 \Mab{\b\alpha^p+\b e_{\tau_1},\b\alpha^q-\b e_s}(\delta_{s\ell^q}(n+2)-\alpha^q_s)
\nonumber\\
&\;\hspace{0.5cm}({\alpha^p_{\sigma_1}+1})
(\alpha^p_{\tau_3}\Szero{\tau_2,s}\!\!-
\alpha^p_{\tau_2}\Szero{\tau_3,s})
 \end{align}
The last block:
 \begin{align}
\int_T  \b\phi_p^4\cdot\b\phi_q^4\,\mathrm{dx} = &\!\!
\sum_{t=1}^4 \!
\sum_{s=1}^4\!
\Mab{\b\alpha^q-\b e_t,\b\alpha^q-\b e_s}
(\delta_{t\ell^p}(n+2)-\alpha^p_t)(\delta_{s\ell^q}(n+2)-\alpha^q_s)\Szero{t,s}
 \end{align}
\end{subequations}

\subsubsection{$\b H(\mathrm{curl})$ stiffness matrix}
Since the curl of Type 2 basis (gradients) vanishes, we only need to calculate $6$ blocks for the stiffness matrix.
The first four blocks:
\begin{subequations}
 \label{curl-stiffness}
 \begin{align}
\int_T \Curl \b\phi_p^1\cdot
\Curl\b\phi_q^1\,\mathrm{dx} = &\;
4\;\Sone{i^pj^p,i^q,j^q}
\\
\int_T  \Curl \b\phi_p^1\cdot \Curl\b\phi_q^2\,\mathrm{dx} = &\;
0\\
\int_T \Curl \b\phi_p^1\cdot\Curl\b\phi_q^3\,\mathrm{dx} = &\;
\frac{12}{(n+2)(n+3)}
\sum_{\b\sigma\in\Sigma_{\ell^q}}
({\alpha^q_{\sigma_1}+1})\nonumber\\
\Big(\alpha^q_{\sigma_3}(\Sone{i^pj^p,\sigma_1\sigma_2}&+
\Sone{i^pj^p,\sigma_3\sigma_2})
-\alpha^q_{\sigma_2}(\Sone{i^pj^p,\sigma_1\sigma_3}+\Sone{i^pj^p,\sigma_2\sigma_3})\Big)
\\
\int_T \Curl \b\phi_p^1\cdot\Curl\b\phi_q^4\,\mathrm{dx} = &\;
0
 \end{align}
The next three blocks are all zeros:
 \begin{align*}
\int_T  \Curl \b\phi_p^2\cdot \Curl\b\phi_q^2\,\mathrm{dx} = &\;
0\\
\int_T  \Curl \b\phi_p^2\cdot \Curl\b\phi_q^3\,\mathrm{dx} = &\;
0\\
\int_T  \Curl \b\phi_p^2\cdot \Curl\b\phi_q^4\,\mathrm{dx} = &\;
0
 \end{align*}
The next two blocks:
 \begin{align}
\int_T \Curl \b\phi_p^3\cdot\Curl\b\phi_q^3\,\mathrm{dx} = &\;
(n+1)^2
\sum_{\b\tau\in\Sigma_{\ell^p}} \sum_{\ell=1}^3
\sum_{\b\sigma\in\Sigma_{\ell^q}} \sum_{k=1}^3
\Mab{\b\alpha^p+\b e_{\tau_1}-\b e_{\tau_\ell},\b\alpha^q+\b e_{\sigma_1}-\b e_{\sigma_k}}\nonumber\\
\;\;\;
({\alpha^p_{\tau_1}+1})&({\alpha^q_{\sigma_1}+1})
(\alpha^p_{\tau_3}\alpha^q_{\sigma_3}\Sone{\tau_\ell\tau_2,\sigma_k\sigma_2}-
\alpha^p_{\tau_3}\alpha^q_{\sigma_2}\Sone{\tau_\ell\tau_2,\sigma_k\sigma_3}\nonumber\\
&\;\;\quad\quad-
\alpha^p_{\tau_2}\alpha^q_{\sigma_3}\Sone{\tau_\ell\tau_3,\sigma_k\sigma_2}
+\alpha^p_{\tau_2}\alpha^q_{\sigma_2}\Sone{\tau_\ell\tau_3,\sigma_k\sigma_3})
\\
\int_T \Curl \b\phi_p^3\cdot\Curl\b\phi_q^4\,\mathrm{dx} = &\;
(n+2)(n+1)^2
\sum_{\b\tau\in\Sigma_{\ell^p}}\sum_{\ell=1}^3
\sum_{i=1}^4 \Mab{\b\alpha^p+\b e_{\tau_1}-\b e_{\tau_\ell},\b\alpha^q-\b e_{\ell^q}-\b e_i}\nonumber\\
&\;({\alpha^p_{\tau_1}+1})
(\alpha^p_{\tau_3}\Sone{\tau_\ell\tau_2,i\ell^q}-
\alpha^p_{\tau_2}\Sone{\tau_\ell\tau_3,i\ell^q})
 \end{align}
The last block:
 \begin{align}
\int_T \Curl \b\phi_p^4\cdot\Curl\b\phi_q^4\,\mathrm{dx} = &\;
\sum_{j=1}^4\sum_{i=1}^4 (n+2)^2(n+1)^2\nonumber\\
&\;
\;\;\;\Mab{\b\alpha^p-\b e_{\ell^p}-\b e_j,\b\alpha^q-\b e_{\ell^q}-\b e_i}
\Sone{j\ell^p,i\ell^q}
 \end{align}
\end{subequations}

\subsection{Mass and stiffness matrices for $\b H(\mathrm{div})$ basis functions}
Now, let us give formula for the mass and stiffness matrices for the $\b H(\mathrm{div})$ basis functions.
For ease of notation, suppose that we have ordered the basis functions such that
\begin{align}
 \label{div-ordering}
 \b\psi_p^1 = \b\chi_{\ell^p},\;\;
 \b\psi_p^2 = \Curl\Xa{F_{\ell^p}T,n}{\b\alpha^p},\;\;
 \b\psi_p^3 = \Curl\Xb{T,n+1}{\ell^p,\b\alpha^p},\;\;
 \b\psi_p^4 = \Xc{n}{\b\alpha},
\end{align}
where $p\in \mathbb{N}^+$ is an integer and the superscript $p$ above indicates the corresponding indices are determined by $p$.

Due to the heterogeneity of the basis functions, the mass and stiffness matrices consists of $4\times 4$ blocks. 
By symmetry, we need to work out the computation for the $10$ upper diagonally  blocks for each matrix. 
Our calculation is given below.

\subsubsection{$\b H(\mathrm{div})$ mass matrix}
Since the Type 2 and Type 3 $\b H(\mathrm{div})$ basis is the curl of Type 3 and Type 4 $\b H(\mathrm{curl})$ basis, the corresponding matrix matrix blocks is identical to the related stiffness matrix for $\b H(\mathrm{curl})$ basis. Hence, we actually only need to compute 
$10-3=7$ blocks this time.

The first four blocks:
\begin{subequations}
 \label{div-mass}
 \begin{align}
\int_T  \b\psi_p^1\cdot\b\psi_q^1\,\mathrm{dx} = &\;
\sum_{\b\tau\in \Sigma_{\ell^p}}
\sum_{\b\sigma\in \Sigma_{\ell^q}}
\Mab{\b e_{\tau_1},\b e_{\sigma_1}}\Sone{\tau_2\tau_3,\sigma_2\sigma_3}\\
\int_T  \b\psi_p^1\cdot \b\psi_q^2\,\mathrm{dx} = &
(n+1)\sum_{\b\tau\in \Sigma_{\ell^p}}
\sum_{\b\sigma\in\Sigma_{\ell^q}} \sum_{k=1}^3
\Mab{\b e_{\tau_1},\b\alpha^q+\b e_{\sigma_1}-\b e_{\sigma_k}}\nonumber\\
&\;\;\;
({\alpha^q_{\sigma_1}+1})
(\alpha^q_{\sigma_3}\Sone{\tau_2\tau_3,\sigma_k\sigma_2}-
\alpha^q_{\sigma_2}\Sone{\tau_2\tau_3,\sigma_k\sigma_3})\\
\int_T \b\psi_p^1\cdot\b\psi_q^3\,\mathrm{dx} = &\;
(n+2)(n+1)\sum_{\b\tau\in \Sigma_{\ell^p}}
\sum_{i=1}^4\Mab{\b e_{\tau_1},\b \alpha^q-\b e_{\ell^q}-\b e_i}
\Sone{\tau_2\tau_3,i\ell^q}\\
\int_T \b\psi_p^1\cdot\b\psi_q^4\,\mathrm{dx} = &\;
\sum_{\b\tau\in \Sigma_{\ell^p}}
\sum_{i=1}^4
\sum_{\b\sigma\in \Sigma_{i}}
(-1)^{i+1}(\alpha_i^q+1)\alpha_{\sigma_1}^q\Mab{\b e_{\tau_1},\b \alpha^q+\b e_i}
\Sone{\tau_2\tau_3,\sigma_2\sigma_3}
 \end{align}
The next three blocks:
 \begin{align}
 \int_T \b\psi_p^2\cdot\b\psi_q^2\,\mathrm{dx} = &\;
(n+1)^2
\sum_{\b\tau\in\Sigma_{\ell^p}} \sum_{\ell=1}^3
\sum_{\b\sigma\in\Sigma_{\ell^q}} \sum_{k=1}^3
\Mab{\b\alpha^p+\b e_{\tau_1}-\b e_{\tau_\ell},\b\alpha^q+\b e_{\sigma_1}-\b e_{\sigma_k}}\nonumber\\
\;\;\;
({\alpha^p_{\tau_1}+1})&({\alpha^q_{\sigma_1}+1})
(\alpha^p_{\tau_3}\alpha^q_{\sigma_3}\Sone{\tau_\ell\tau_2,\sigma_k\sigma_2}-
\alpha^p_{\tau_3}\alpha^q_{\sigma_2}\Sone{\tau_\ell\tau_2,\sigma_k\sigma_3}\nonumber\\
&\;\;\quad\quad-
\alpha^p_{\tau_2}\alpha^q_{\sigma_3}\Sone{\tau_\ell\tau_3,\sigma_k\sigma_2}
+\alpha^p_{\tau_2}\alpha^q_{\sigma_2}\Sone{\tau_\ell\tau_3,\sigma_k\sigma_3})
\\
\int_T \b\psi_p^2\cdot \b\psi_q^3\,\mathrm{dx} = &\;
(n+2)(n+1)^2
\sum_{\b\tau\in\Sigma_{\ell^p}}\sum_{\ell=1}^3
\sum_{i=1}^4 \Mab{\b\alpha^p+\b e_{\tau_1}-\b e_{\tau_\ell},\b\alpha^q-\b e_{\ell^q}-\b e_i}\nonumber\\
&\;({\alpha^p_{\tau_1}+1})
(\alpha^p_{\tau_3}\Sone{\tau_\ell\tau_2,i\ell^q}-
\alpha^p_{\tau_2}\Sone{\tau_\ell\tau_3,i\ell^q})\\
\int_T \b\psi_p^2\cdot\b\psi_q^4\,\mathrm{dx} = &\;
(n+1)
\sum_{\b\tau\in\Sigma_{\ell^p}}\sum_{\ell=1}^3
\sum_{i=1}^4
\sum_{\b\sigma\in \Sigma_{i}}
(-1)^{i+1}\Mab{\b\alpha^p+\b e_{\tau_1}-\b e_{\tau_\ell},\b\alpha^q+\b e_i}\nonumber\\
&\;({\alpha^p_{\tau_1}+1})(\alpha_i^q+1)\alpha_{\sigma_1}^q
(\alpha^p_{\tau_3}\Sone{\tau_\ell\tau_2,\sigma_2\sigma_3}-
\alpha^p_{\tau_2}\Sone{\tau_\ell\tau_3,\sigma_2\sigma_3})
 \end{align}
The next two blocks:
 \begin{align}
 \int_T \b\psi_p^3\cdot\b\psi_q^3\,\mathrm{dx} = &\;
\sum_{j=1}^4\sum_{i=1}^4 (n+2)^2(n+1)^2\nonumber\\
&\;
\;\;\;\Mab{\b\alpha^p-\b e_{\ell^p}-\b e_j,\b\alpha^q-\b e_{\ell^q}-\b e_i}
\Sone{j\ell^p,i\ell^q}\\
\int_T \b\psi_p^3\cdot\b\psi_q^4\,\mathrm{dx} = &\;
(n+2)(n+1)\sum_{j=1}^4 \sum_{i=1}^4
\sum_{\b\sigma\in \Sigma_{i}}
(-1)^{i+1}
\Mab{\b\alpha^p-\b e_{\ell^p}-\b e_j,\b\alpha^q+\b e_i}\nonumber\\
&\;(\alpha_i^q+1)\alpha_{\sigma_1}^q
\Sone{j\ell^p,\sigma_2\sigma_3}
 \end{align}
The last block:
 \begin{align}
\int_T \b\psi_p^4\cdot\b\psi_q^4\,\mathrm{dx} = &\;
\sum_{j=1}^4
\sum_{\b\tau\in \Sigma_{j}}\sum_{i=1}^4
\sum_{\b\sigma\in \Sigma_{i}}
(-1)^{j+i}
\Mab{\b\alpha^p+\b e_j,\b\alpha^q+\b e_i}\nonumber\\
&\;(\alpha_j^p+1)\alpha_{\tau_1}^p(\alpha_i^q+1)\alpha_{\sigma_1}^q
\Sone{\tau_2\tau_3,\sigma_2\sigma_3}
 \end{align}
\end{subequations}

\subsubsection{$\b H(\mathrm{div})$ stiffness matrix}
This time, we only need to compute three blocks since the divergence of Type 2 and Type 3 basis functions are zero.
\begin{subequations}
 \label{div-stiffness}
 \begin{align}
\int_T \Div \b\psi_p^1\,
\Div\b\psi_q^1\,\mathrm{dx} = &\;
(-1)^{\ell^p+\ell^q}9\;\Stwo{123}\\
\int_T \Div \b\psi_p^1\,\Div\b\psi_q^4\,\mathrm{dx} = &\;0\\
\int_T \Div \b\psi_p^4\,\Div\b\psi_q^4\,\mathrm{dx} = &\;
(n+1)^2\sum_{j=1}^4\sum_{\ell=1}^4 \sum_{i=1}^4\sum_{k=1}^4 (-1)^{i+j}\Mab{\b \alpha^p+\b e_j-\b e_\ell,\b \alpha^q+\b e_i-\b e_k}\nonumber\\
&\;(\alpha_j^p+1)(\delta_{j\ell}n-\alpha_\ell)(\alpha_i^q+1)(\delta_{ik}n-\alpha_k)
\Stwo{123}
 \end{align}
\end{subequations}

\section{Numerical Examples}
\label{sec:num}
We conclude with three examples which illustrate the use of the bases. 
We consider the case of 
constant data and affine tetrahedral elements although the ideas presented in \cite{AinsworthAndriamaroDavydov11} could be used to treat much more general scenarios. 
The stiffness and mass matrices 
are explicitly computed using the formula in 
Section \ref{sec:comp} (along with proper ordering of degrees of freedom); while the 
integrals involving
source terms are obtained via Stroud conical quadrature rules \cite{AinsworthAndriamaroDavydov11}. 

The first two examples were taken from \cite{Kirby14} where, for the first time,  $\b H(\mathrm{curl})$ and $\b H(\mathrm{div})$ Bernstein
basis in \cite{ArnoldFalkWinther09} were implemented.
The third example solve a mixed elliptic problem with a divergence-free vector solution, where we 
exploit the fact that our basis provides  a clear separation of the divergence free functions to
solve the problem with a 
reduced $\b H{(\mathrm{div})}$ basis whereby all the Type 4 (not divergence-free) $\b H(\mathrm{div})$ bubbles are omitted, coupled 
with a piecewise constant scalar filed. 
The reduced basis approach produce exactly the same vector field approximation but requires fewer degrees
 of freedom
(see e.g. \cite{Lehrenfeld:10,LehrenfeldSchoberl16} on the use of reduced basis for incompressible flow problems).

\subsection{Maxwell eigenvalue problem}
We consider the problem of finding resonances $\omega\in \mathbb{R}$ 
and eigenfunctions $\b E\in\b H_0(\mathrm{curl})$ 
such  that 
\begin{align*}
 (\Curl \b E,\Curl \b F)_\Omega = \omega^2(\b E,\b F)_\Omega,\quad\quad
 \forall \b F\in\b H_0(\mathrm{curl}).
\end{align*}
Same as in \cite{Kirby14}, we consider the cube $\Omega =[0, \pi]^3$ divided into 6 tetrahedra and study $p$-convergence of the 
finite element eigenvalue problem with \NED's $\b H(\mathrm{curl})$ finite elements of first kind, $\NDOne{n}$, 
using basis functions in Table  \ref{table-geo}.
For this problem, the first eigenvalues are $2$ with multiplicity $3$, 3 with multiplicity $2$, and $5$ with multiplicity $6$.
We take the first $11$ nonzero computed eigenvalues 
and measure the average error in each one in Figure \ref{fig:cavity}. Similar convergence behavior as that in \cite[Fig. 4.8]{Kirby14} is obtained; in particular, we see that ten-digit accuracy in all three of the first eigenvalues is obtained when we reach degree 12, we also see round-off effect when degree is 13. 
\begin{figure}[!ht]
  \scalebox{0.30}[0.26]{\includegraphics{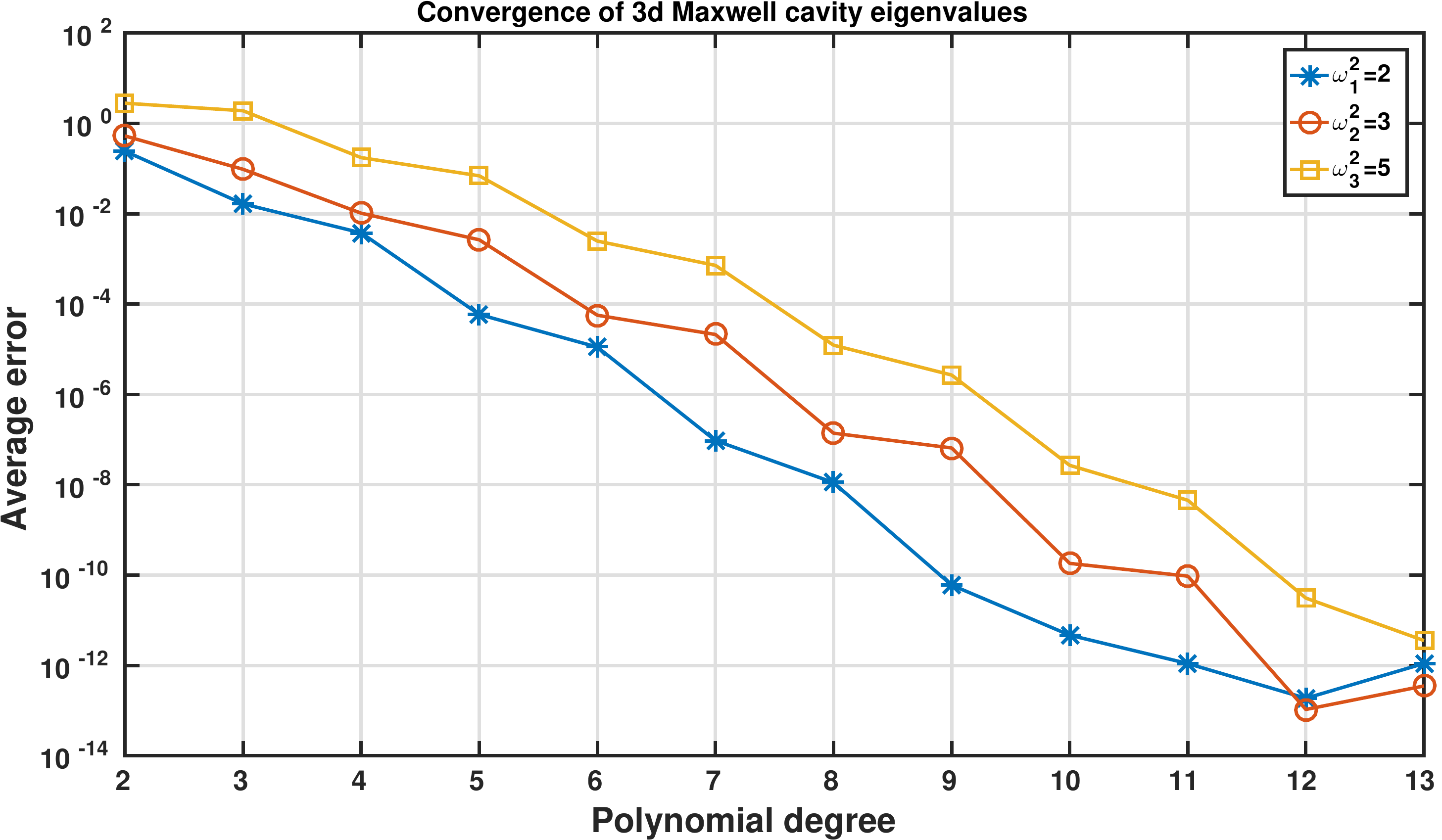}}
  \caption{Error in the first three non-zero distinct eigenvalues for the three-dimensional cavity resonator problem}\label{fig:cavity}
\end{figure}

\subsection{Mixed Poisson problem}
Here we consider the mixed Poisson problem
\begin{align*}
 \b u +\Grad p = &\; 0\\
 \Div\b u = &\; f
\end{align*}
on a unit cube which homogeneous Dirichlet boundary conditions.
We pick $f$ such that the true solution is $p(x,y,z) =\sin(\pi x)\sin(\pi y)\sin(\pi z)$.
Again, we divided the unit cube into 6 tetrahedra and study $p$-convergence of the mixed finite elements:
find $(\b u_h,p_h)\in \VVV_h\times \WWW_h$ such that 
\begin{align*}
 (\b u_h,\b v_h)_\Omega -(p_h,\Div \b v_h)_\Omega = &\; 0\\
( \Div\b u,q_h)_\Omega = &\; (f,q_h)_\Omega
\end{align*}
the stable finite element pair $\RT{n}\times \pol_n$ using the basis functions in Table \ref{table-geo} 
is used in the computation. 
We use static condensation to condense out the interior degrees of freedom for the velocity and 
{\it average-zero} degrees of freedom for the potential, the resulting condensed system matrix consists of  
face degrees of freedom for velocity and piecewise constants for pressure.
The size of the condensed system matrix, along with that of the original one, for various polynomial degree is listed in Table \ref{table:size-mp}. 
We observe significant computational saving in terms of the global system matrix size by the static condensation.
We remark that such static condensation relies on  the fact that there are no inter-element communication for the 
internal degrees of freedom for velocity and average-zero degrees of freedom for the potential, hence they can be static condensed out using the face degrees of freedom for velocity and 
constant degree of freedom for potential in each element. 
If the potential degrees of freedom were not to have the 
constant and average-zero separation, as the basis used in \cite{Kirby14}, one might not be able to condense out any of the pressure degrees of freedom.
\begin{table}[!ht]
\caption{Size of condensed system matrix $K_c$ and original system matrix $K_o$ for mixed Poisson problem
on a six-tetrahedron mesh for polynomial degree 0 to 14}
\begin{tabular}{|l| c| c| c| c| c| c| c| c |}
\hline
degree & 0 & 1 & 2 &3 & 4 & 5 & 6 & 7\\
\hline
size $K_c$ & 24 & 60 & 114 &186 & 276 & 384 & 510 & 654\\
size $K_o$ & 24 & 96 & 240 &480 & 840 & 1344&2016&2880\\
\hline
degree & 8 & 9 & 10 &11 & 12 & 13 & 14 &\\
\hline
size $K_c$& 816 & 996&1194&1410&1644&1896&2166&\\ 
size $K_o$& 3960&5280&6864&8736&10920&13440&16320&\\
\hline
\end{tabular}
\label{table:size-mp}
\end{table}

The $L^2$-error for the potential and velocity fields are plotted in 
Figure \ref{fig:darcy}. 
We observe exponential convergence as expected. 
We also see similar jagged convergence in potential, with small drops from even up to odd degree followed by large drops 
from odd up to even degree,
as documented in \cite{Kirby14}. However, such jagged convergence behavior seems do not appear for the velocity approximation.
\begin{figure}[!ht]
  \scalebox{0.30}[0.26]{\includegraphics{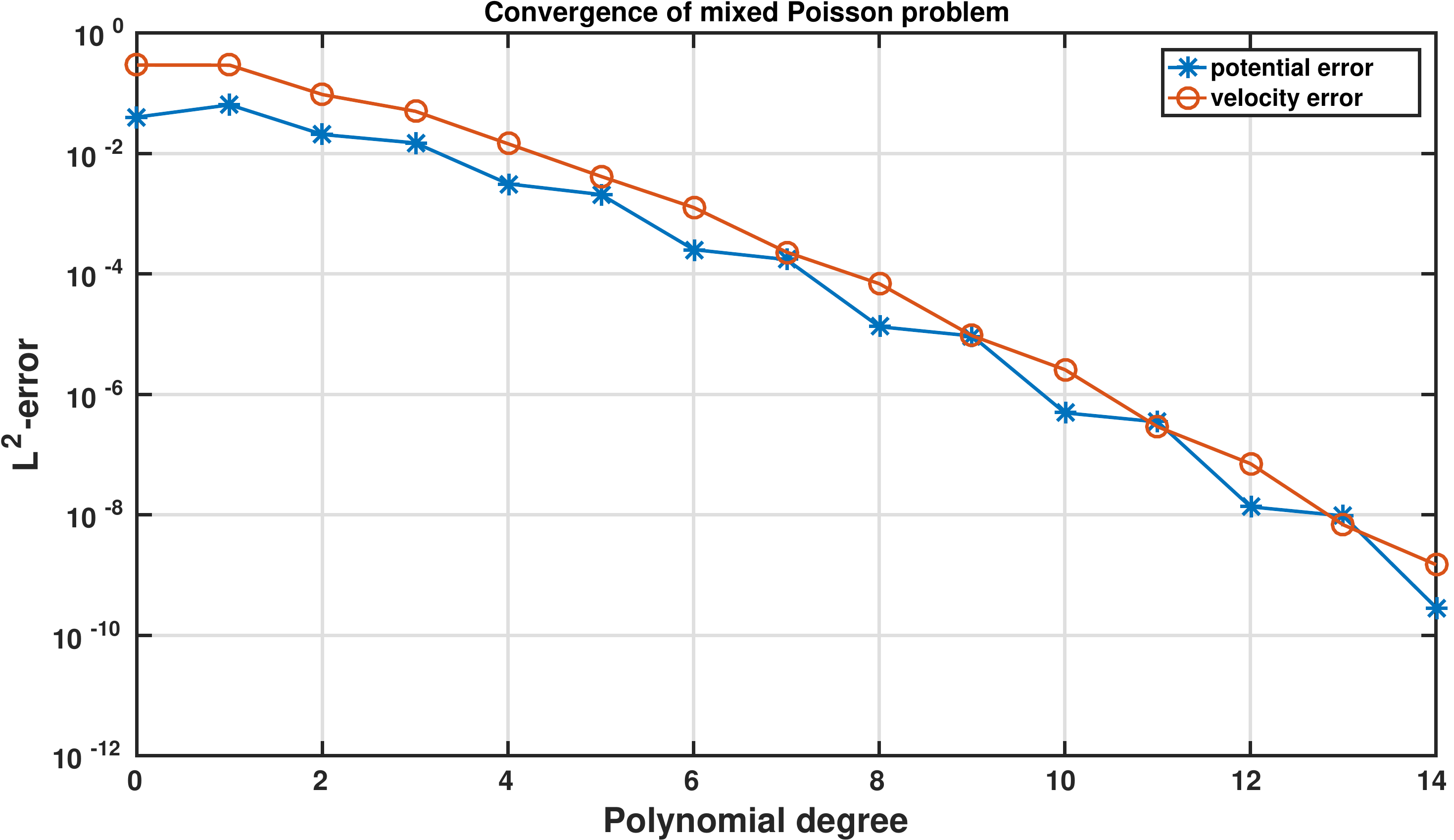}}
  \caption{Convergence of the mixed finite element method as a function of the polynomial degree on a mesh with six tetrahedra.}\label{fig:darcy}
\end{figure}


\subsection{Modified mixed Poisson problem}
Here we consider the following modified mixed Poisson problem
\begin{align*}
 \b u +\Grad p = &\; \b f\\
 \Div\b u = &\; 0
\end{align*}
on a unit cube with homogeneous Dirichlet boundary conditions for scalar field $p$.
We pick $\b f$ such that the true solution is $p(x,y,z) =\sin(\pi x)\sin(\pi y)\sin(\pi z)$ and 
$\b u(x,y,z) = (0, \sin(\pi x)\cos(\pi y)\sin(\pi z),-\sin(\pi x)\sin(\pi y)\cos(\pi z))^T$.

This time, we study $h$-convergence of the mixed finite elements:
find $(\b u_h,p_h)\in \VVV_h\times \WWW_h$ such that 
\begin{align*}
 (\b u_h,\b v_h)_\Omega -(p_h,\Div \b v_h)_\Omega = &\; (\b f, \b v_h)_\Omega\\
( \Div\b u,q_h)_\Omega = &\; 0.
\end{align*}
In Figure \ref{fig:mdarcy}, we plot  $L^2$-error for the vector field for two finite element methods 
for polynomial degree $1,3,5$ on five consecutive meshes 
obtained by cutting the unit cube to $m^3$ congruent cubes and dividing each cube into 6 tetrahedra, we vary 
the (inverse) mesh size $m$ from 2 to 10. 
The first method, labeled as {\sf Div-Free:NO} in Figure \ref{fig:mdarcy}, uses the Raviart-Thomas finite elements used in the previous test:
\[
 \RT{n}\times \pol_n = 
 \left( \VVV_{low}\oplus_{F\in\mathcal{F}(K)} \Curl\EEE_n^F\oplus \Curl\EEE_{n+1}^T
 \oplus \VVV_n^T\right)\times \left(\WWW_{low}\oplus \Div\VVV_n^T\right),
\]
the second method, labeled as {\sf Div-Free:YES} in Figure \ref{fig:mdarcy}, uses the  following reduced (divergence-free) finite elements:
\[
\left( \VVV_{low}\oplus_{F\in\mathcal{F}(K)} \Curl\EEE_n^F\oplus \Curl\EEE_{n+1}^T\right)\times \WWW_{low}.
\]
See definition of the above spaces in Section \ref{sec:sequence}.
It is easy to show that both methods produce the same vector approximation, which is clearly observed in Figure \ref{fig:mdarcy},
where the optimal $L^2$-convergence error for the vector field for both methods are on top of each other.
We list the global (static-condensed) number of degrees of freedom along with 
the local number of degrees of freedom
 for both methods in Table \ref{table:size-mpx}. 
 While both methods share the same number of global degrees of freedom, 
the reduced basis approach has significant amount of local degrees of freedom reduction.
\begin{figure}[!ht]
  \scalebox{0.35}[0.3]{\includegraphics{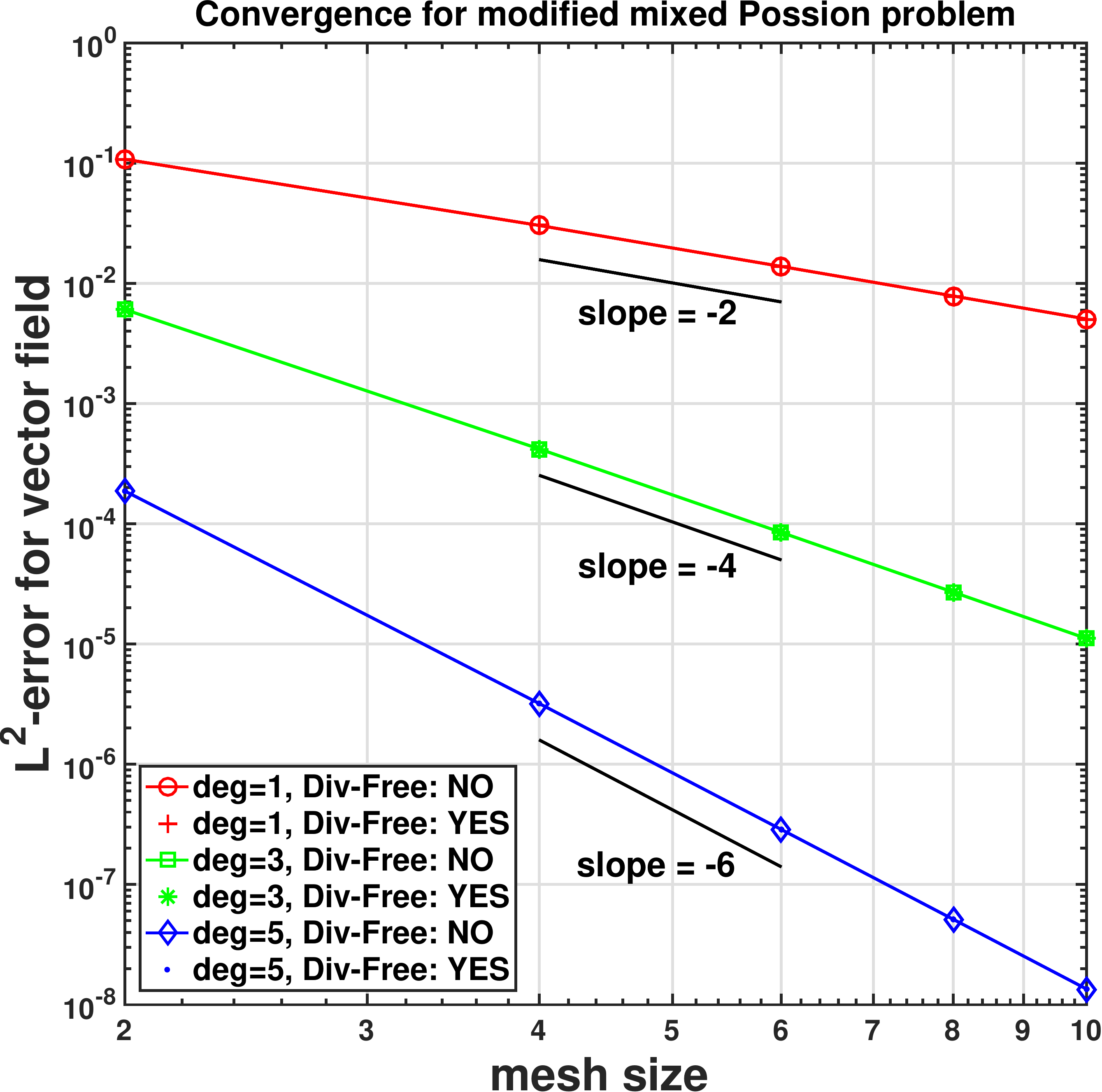}}
  \caption{$L^2$-error for vector field of the modified mixed Poisson problem. {\sf Div-Free:NO} is referred as 
  the first (original) method, and {\sf Div-Free:YES} is referred as 
  the second (reduced) method.}
  \label{fig:mdarcy}
\end{figure}

\begin{table}[!ht]
\caption{Number of degrees of freedom for the two methods. $N_g$: global number of dofs for both methods, 
$N_l^{red}$: local number of dofs for second method (with reduced basis), $N_l$: local number of dofs for first method.}
\resizebox{0.9\textwidth}{!}
{
\begin{tabular}{|l | c c c| c c c| c c  c|}
\hline
Nelems & &deg=1 &&& deg=2 && &deg=3&\\
\hline
 &$N_g$ &$N_l^{red}$ &$N_l$&$N_g$ &$N_l^{red}$ &$N_l$&$N_g$ &$N_l^{red}$ &$N_l$\\
\hline
48     & 408     & 0 & 288   & 1248 & 528 & 2352 &2568 & 2400 & 7680\\
384   & 2976   & 0 & 2304 &9024 & 4224 & 18816& 18528 & 19200 & 61440\\
1296 & 9720   & 0 & 7776 &29376 & 14256 & 63504 & 60264 & 64800&207360\\
3072 & 22656 & 0&18432&68352&33792&150528&140160&153600&491520\\ 
6000 & 43800 & 0&36000&132000&66000&294000&270600&300000&960000\\
\hline
\end{tabular}
}
\label{table:size-mpx}
\end{table}


\section*{Appendix: Proof of Lemma \ref{lemma:cell-curl-bubble}}
\begin{proof}
Firstly, we show that $\Xb{n+1}{\ell,\b{\alpha}}$ has vanishing tangential
components on $\partial T$ for $\b{\alpha}\in\mathring{\c{I}}_{n+2}(T)$. The
vector field $\BB{n+1}{\b{\alpha}-\b{e}_\ell}\Grad\lambda_\ell$ has vanishing
tangential components on the face $F_\ell$ since $\Grad\lambda_\ell$
points in the direction of the normal on face $F_\ell$, whilst the Bernstein
polynomial $\BB{n+1}{\b{\alpha}-\b{e}_\ell}$ vanishes on the remaining faces
since $\b{\alpha}\in\mathring{\c{I}}_{n+2}(T)$. Likewise, the Bernstein
polynomial $\BB{n+2}{\b{\alpha}}$ belongs to $H^1_0(T)$ and so 
$\Grad\BB{n+2}{\b{\alpha}}\in\b{H}_0(\mathrm{curl}; T)$. Hence, 
$\Xb{n+1}{\ell,\b{\alpha}}\in\b{H}_0(\mathrm{curl}; T)$. 

Concerning membership of the \NED space $\NDOne{n}$, we begin with the
observation that $\Xb{n+1}{\ell,\b{\alpha}}\in\mathbb{P}_{n+1}^3$.
Consequently, in view of~\cite[Proposition 1]{Nedelec80}, it suffices to
show that $\b{x}\cdot\Xb{n+1}{\ell,\b{\alpha}}\in\mathbb{P}_{n+1}$, where
$\b{x}\in\RR^3$ denotes the usual position vector of points in $\RR^3$. Next,
since the barycentric coordinates are affine functions of $\b{x}$, 
\begin{equation*}
  (\b{x}-\b{x}_\ell)\cdot\Grad\lambda_m
  = \lambda_m(\b{x})-\lambda_m(\b{x}_\ell)
  = \lambda_m(\b{x})-\delta_{\ell m}.
\end{equation*}
Hence, 
\begin{equation*}
  (\b{x}-\b{x}_\ell)\cdot\BB{n+1}{\b{\alpha}-\b{e}_\ell}
  \Grad\lambda_\ell
  = (\lambda_\ell - 1)\BB{n+1}{\b{\alpha}-\b{e}_\ell}
  = \frac{\alpha_\ell}{n+2}\BB{n+2}{\b{\alpha}}
  - \BB{n+1}{\b{\alpha}-\b{e} _\ell},
\end{equation*}
thanks to the following elementary property of Bernstein polynomials 
\begin{equation*}
  \lambda_m\BB{n-1}{\b{\alpha}-\b{e}_m}
  = \dfrac{\alpha_m}{n+2}\BB{n+2}{\b{\alpha}}.
\end{equation*}
The same property, along with identity
$
    \Grad B^{n+1}_\b{\alpha} = (n+1)\sum_{k=1}^4
B^n_{\b{\alpha}-\b{e}_k}\Grad\lambda_k,
$
 also gives 
\begin{align*}
  \dfrac{1}{n+2}(\b{x}-\b{x}_\ell)\cdot\Grad\BB{n+2}{\b{\alpha}}
  &= \sum_{m=1}^4\BB{n+1}{\b{\alpha}-\b{e}_m}
    (\b{x}-\b{x}_\ell)\cdot\Grad\lambda_m \\
  &= \sum_{m=1}^4\lambda_m\BB{n+1}{\b{\alpha}-\b{e}_m}
   - \BB{n+1}{\b{\alpha}-\b{e}_\ell}\\
  &= \BB{n+2}{\b{\alpha}}-\BB{n+1}{\b{\alpha}-\b{e}_\ell},
\end{align*}
on recalling $\sum_{m=1}^4\alpha_m = n+2$. Therefore, 
\begin{equation*}
  (\b{x}-\b{x}_\ell)\cdot\Xb{n+1}{\ell,\b{\alpha}}
  = -(n+2-\alpha_\ell)\BB{n+1}{\b{\alpha}-\b{e}_\ell},
\end{equation*}
or, equally well,
$\b{x}\cdot\Xb{n+1}{\ell,\b{\alpha}}\in\mathbb{P}_{n+1}$. 

A simple calculation implies that 
the total number of functions in the sets in~\eqref{BBC-bubble} is
$2{n+1\choose3}+{n\choose2}$, which is equal to the dimension of divergence-free
bubble space on a tetrahedral 
$\b H_0(\mathrm{div}^0;T)\cap \mathbb{P}_n^3$.
By exact sequence property, the fact that 
$\Xb{n+1}{\ell,\b{\alpha}}\in \NDOne{n}$ implies that 
$\Curl\Xb{n+1}{\ell,\b{\alpha}}\in 
\b H_0(\mathrm{div}^0;T)\cap \mathbb{P}_n^3.
$

Let us now prove the linear independence 
of the functions in~\eqref{BBC-bubble-curl}.

Clearly 
\begin{equation*}
  \Curl\Xb{n+1}{\ell,\b{\alpha}}
  = (n+2)\Curl(\BB{n+1}{\b{\alpha}-\b{e}_\ell}\Grad\lambda_\ell)
\end{equation*}
then, using $
    \Grad B^{n+1}_\b{\alpha} = (n+1)\sum_{k=1}^4
B^n_{\b{\alpha}-\b{e}_k}\Grad\lambda_k,
$, we obtain
\begin{equation*}
  \frac{1}{(n+1)(n+2)}\Curl\Xb{n+1}{\ell,\b{\alpha}}
  = \sum_{m=1}^4 \BB{n}{\b{\alpha}-\b{e}_\ell-\b{e}_m}
  \Grad\lambda_m\times\Grad\lambda_\ell.
\end{equation*}
In particular if $\ell=1$, then 
\begin{align*}
\dfrac{1}{(n+1)(n+2)}\Curl\Xb{n+1}{1,\b{\alpha}} 
    &=\BB{n}{\b{\alpha}-\b{e}_1-\b{e}_2}\Grad\lambda_2\times\Grad\lambda_1 \\
    &+\BB{n}{\b{\alpha}-\b{e}_1-\b{e}_3}\Grad\lambda_3\times\Grad\lambda_1 \\
    &+\BB{n}{\b{\alpha}-\b{e}_1-\b{e}_4}\Grad\lambda_4\times\Grad\lambda_1
\end{align*}
and eliminating $\Grad\lambda_4$ with the aid of the expression
\begin{equation*}
  \Grad\lambda_1 + \Grad\lambda_2 + \Grad\lambda_3 + \Grad\lambda_4 = \b{0}
\end{equation*}
gives 
\begin{align}\label{eq:curlQ}
\dfrac{1}{(n+1)(n+2)}\Curl\Xb{n+1}{1,\b{\alpha}}
    &=( \BB{n}{\b{\alpha}-\b{e}_1-\b{e}_4} 
     - \BB{n}{\b{\alpha}-\b{e}_1-\b{e}_2} )
     \Grad\lambda_1\times\Grad\lambda_2 \nonumber\\
    &+( \BB{n}{\b{\alpha}-\b{e}_1-\b{e}_3} 
     - \BB{n}{\b{\alpha}-\b{e}_1-\b{e}_4} )   
     \Grad\lambda_3\times\Grad\lambda_1.
\end{align}
Analogous expressions are readily obtained for
$\Curl\Xb{n+1}{\ell,\b{\alpha}}$, $\ell=2,3$ by cyclic permutation of the
indices 1-2-3. 

To simplify notation, we denote the direction vectors 
\begin{equation*}
\b t_{12}:=  \Grad\lambda_1\times\Grad\lambda_2,\;\;
\b t_{23}:=  \Grad\lambda_2\times\Grad\lambda_3,\;\;
\b t_{31}:=  \Grad\lambda_3\times\Grad\lambda_1.
\end{equation*}
It is clear that $\b t_{12}, \b t_{23}, \b t_{31}\in \RR^3$
are linearly independent. 

Now, let coefficients $\ca$, 
$\cb$, and $\cc$ be such that 
\[
 \sum_{\b\alpha\in \mathring{\c I}_{3}^{n+2}}
 \ca\Curl\Xb{n+1}{1,\b{\alpha}}
 +
  \sum_{\b\alpha\in \mathring{\c I}_{3}^{n+2}}
 \cb\Curl\Xb{n+1}{2,\b{\alpha}}
+ \sum_{\overset{\b\alpha\in \mathring{\c I}_{3}^{n+2}}{\alpha_3=1\;\;}}
 \cc\Curl\Xb{n+1}{3,\b{\alpha}}
=\b 0.
\]
By~\eqref{eq:curlQ}, we have
\begin{align*}
 & \left(\sum_{\b\alpha\in \mathring{\c I}_{3}^{n+2}}\!\!\!
 \ca
 ( \BB{n}{\b{\alpha}-\b{e}_1-\b{e}_4}\!\!\!
     - \!\BB{n}{\b{\alpha}-\b{e}_1-\b{e}_2} )
\!+ \!\!\!\!  \sum_{\b\alpha\in \mathring{\c I}_{3}^{n+2}}
\!\!\! \cb
 ( \BB{n}{\b{\alpha}-\b{e}_1-\b{e}_2}\!\!\! 
     -\! \BB{n}{\b{\alpha}-\b{e}_2-\b{e}_4} )
     \!\right)\!
\b t_{12}&&\\
 &+ \left(\sum_{\b\alpha\in \mathring{\c I}_{3}^{n+2}}\!\!\!
 \cb
 ( \BB{n}{\b{\alpha}-\b{e}_2-\b{e}_4}\!\!\!
     - \!\BB{n}{\b{\alpha}-\b{e}_2-\b{e}_3} )
\!+ \!\!\!\!  \sum_{
\overset{\b\alpha\in \mathring{\c I}_{3}^{n+2}}{\alpha_3 =1\;\;}
}
\!\!\! \cc
 ( \BB{n}{\b{\alpha}-\b{e}_2-\b{e}_3}\!\!\! 
     -\! \BB{n}{\b{\alpha}-\b{e}_3-\b{e}_4} )
     \!\right)\!
\b t_{23}&&\\
 &+ \left(\sum_{\b\alpha\in \mathring{\c I}_{3}^{n+2}}\!\!\!
 \ca
 ( \BB{n}{\b{\alpha}-\b{e}_1-\b{e}_3}\!\!\!
     - \!\BB{n}{\b{\alpha}-\b{e}_1-\b{e}_4} )
\!+ \!\!\!\!  \sum_{
\overset{\b\alpha\in \mathring{\c I}_{3}^{n+2}}{\alpha_3 =1\;\;}
}
\!\!\! \cc
 ( \BB{n}{\b{\alpha}-\b{e}_3-\b{e}_4}\!\!\! 
     -\! \BB{n}{\b{\alpha}-\b{e}_1-\b{e}_3} )
     \!\right)\!
\b t_{31}=\b 0.
\end{align*}
This implies that each term behind the direction vectors 
$\b t_{12}$, $\b t_{23}$, and $\b t_{31}$
of the above expression is {\it zero}.

Let us now focus on the term behind $\b t_{31}$.
We have
\begin{equation}
\label{eq-23}
 \sum_{\b\alpha\in \mathring{\c I}_{3}^{n+2}}\!\!\!
 \ca
 ( \BB{n}{\b{\alpha}-\b{e}_1-\b{e}_3}\!\!\!
     - \!\BB{n}{\b{\alpha}-\b{e}_1-\b{e}_4} )
\!+ \!\!\!\!  \sum_{
\overset{\b\alpha\in \mathring{\c I}_{3}^{n+2}}{\alpha_3 =1\;\;}
}
\!\!\! \cc
 ( \BB{n}{\b{\alpha}-\b{e}_3-\b{e}_4}\!\!\! 
     -\! \BB{n}{\b{\alpha}-\b{e}_1-\b{e}_3} )=0
\end{equation}
The goal is to show that all the coefficients in the above expression is {\it zero}.
We proceed the proof by mathematical induction on the last index $\alpha_4$.

Given $k\in \NN_+$, 
if
\begin{subequations}
\label{cc-ca}
 \begin{align}
\label{cc1}
 \cc = &0 \;\text{ for }\; \b\alpha\in \mathring{\c I}_{3}^{n+2}\;\text{ such that } {\alpha_3 =1,\;\alpha_4\le k},\\
\label{ca1}
 \ca = &0 \;\text{ for }\; \b\alpha\in \mathring{\c I}_{3}^{n+2}\;\text{ such that } {\;\alpha_4\le k},
\end{align}
\end{subequations}
let us prove 
\begin{subequations}
\label{cc-ca-1}
\begin{align}
\label{cc1-1}
 \cc = &0 \;\text{ for }\; \b\alpha\in \mathring{\c I}_{3}^{n+2}\;\text{ such that } {\alpha_3 =1,\;\alpha_4= k+1},\\
 \label{ca1-1}
 \ca = &0 \;\text{ for }\; \b\alpha\in \mathring{\c I}_{3}^{n+2}\;\text{ such that } {\;\alpha_4= k+1}.
\end{align}
\end{subequations}

First, by induction hypothesis \eqref{cc-ca}, identity \eqref{eq-23} reduces
to 
\begin{equation}
\label{cc-red}
 \sum_{
 \overset{\b\alpha\in \mathring{\c I}_{3}^{n+2}}{\alpha_4 \ge k+1\;\;}}\!\!\!
 \ca
 ( \BB{n}{\b{\alpha}-\b{e}_1-\b{e}_3}\!\!\!
     - \!\BB{n}{\b{\alpha}-\b{e}_1-\b{e}_4} )
\!+ \!\!\!\!  \sum_{
\overset{\b\alpha\in \mathring{\c I}_{3}^{n+2}}{\alpha_3 =1,
\alpha_4 \ge k+1\;\;}
}
\!\!\! \cc
 ( \BB{n}{\b{\alpha}-\b{e}_3-\b{e}_4}\!\!\! 
     -\! \BB{n}{\b{\alpha}-\b{e}_1-\b{e}_3} )=0
\end{equation}
Recursively using 
$
    \lambda_k B^{n}_{\b{\alpha}} =\frac{\alpha_k+1}{n+1} 
B^{n+1}_{\b{\alpha}+\b{e}_k},
$ we have
\[
\BB{n}{\b{\beta}}=\frac{{n\choose k}\lambda_4^k}{{\beta_4\choose k}}
\BB{n-k}{\b{\beta}-k\b e_4}\;\;
\text{ for }\b\beta\in \INThree{n} \text{ such that }
\beta_4 \ge k.
\]
Applying the above identity to \eqref{cc-red}, and dividing the resulting 
expression by $ {n\choose k}\lambda_4^k$, we obtain
\begin{align}
\label{cc-red2}
0=&\; \sum_{
 \overset{\b\alpha\in \mathring{\c I}_{3}^{n+2}}{\alpha_4 \ge k+1\;\;}}
\left( \frac{\ca}{{\alpha_4\choose k}} \BB{n-k}{\b{\alpha}-\b{e}_1-\b{e}_3-k\b e_4}
     -\frac{\ca}{{\alpha_4-1\choose k}} \BB{n-k}{\b{\alpha}-\b{e}_1-(k+1)\b{e}_4} \right)\nonumber \\
&\;\;\;\;\;\; +\sum_{
\overset{\b\alpha\in \mathring{\c I}_{3}^{n+2}}{\alpha_3 =1,
\alpha_4 \ge k+1\;\;}
}
 \left(\frac{\cc}{{\alpha_4-1\choose k}}  \BB{n-k}{\b{\alpha}-\b{e}_3-(k+1)\b{e}_4}
     -\frac{\cc}{{\alpha_4\choose k}}  \BB{n-k}{\b{\alpha}-\b{e}_1-\b{e}_3-k\b e_4} 
     \right)
\end{align}
Now, evaluating right hand side of \eqref{cc-red2} 
on the edge $E_{34}:=F_3\cap F_4$, and recalling 
that $\lambda_3=\lambda_4=0$ on this edge, we get  
\[
\sum_{
\overset{\b\alpha\in \mathring{\c I}_{3}^{n+2}}{\alpha_3 =1,
\alpha_4 = k+1\;\;}
}{\ca}\BB{n}{\b{\alpha}-\b{e}_3-(k+1)\b{e}_4}
\Big|_{E_{34}}=
0,
\]    
which implies the result in \eqref{cc1-1} holds true
by the linear independence of Bernstein polynomials on the edge.
Next, using \eqref{cc1-1}, evaluating the expression~\eqref{cc-red2} on the face $F_4$, and recalling that 
$\lambda_4=0$ on this face, we get 
\[
\sum_{
 \overset{\b\alpha\in \mathring{\c I}_{3}^{n+2}}{\alpha_4 = k+1\;\;}}
{\ca}\BB{n-k}{\b{\alpha}-\b{e}_1-(k+1)\b{e}_4}\Big|_{F_4} = 0
\]
which implies the result in \eqref{ca1-1} holds true
by the linear independence of Bernstein polynomials on the face.
This completes the induction proof.

Similar, we can show that $\cb=0$ for all $\b\alpha\in \mathring{\c I}_{3}^{n+2}$, 
hence the set \eqref{BBC-bubble-curl} form a basis for 
$\b H_0(\mathrm{div}^0;T)\cap \mathbb{P}_n^3$, which immediately implies the 
linear independence of functions in the set
\eqref{BBC-bubble}.
This completes the proof of Lemma \ref{lemma:cell-curl-bubble}.
\end{proof}

\bibliographystyle{siam}

\end{document}